\numberwithin{equation}{section}
\numberwithin{figure}{section}
\theoremstyle{plain}
\newtheorem{thm}{\protect\theoremname}[section]
  \theoremstyle{plain}
  \newtheorem{cor}[thm]{\protect\corollaryname}
  \theoremstyle{remark}
  \newtheorem{rem}[thm]{\protect\remarkname}
  \theoremstyle{plain}
  \newtheorem{prop}[thm]{\protect\propositionname}
  \theoremstyle{plain}
  \newtheorem{lem}[thm]{\protect\lemmaname}
  \theoremstyle{definition}
  \newtheorem{example}[thm]{\protect\examplename}
  \theoremstyle{definition}
  \newtheorem{defn}[thm]{\protect\definitionname}
\def\makebbb#1{
    \expandafter\gdef\csname#1\endcsname{
        \ensuremath{\Bbb{#1}}}
}\makebbb{R}\makebbb{N}\makebbb{Z}\makebbb{C}\makebbb{H}\makebbb{E}\makebbb{H}\makebbb{P}\makebbb{B}\makebbb{Q}\makebbb{E}
  \providecommand{\corollaryname}{Corollary}
  \providecommand{\definitionname}{Definition}
  \providecommand{\examplename}{Example}
  \providecommand{\lemmaname}{Lemma}
  \providecommand{\propositionname}{Proposition}
  \providecommand{\remarkname}{Remark}
\providecommand{\theoremname}{Theorem}
\begin{document}

\title{From Monge-Ampère equations to envelopes and geodesic rays in the
zero temperature limit}

\author{Robert J. Berman}
\begin{abstract}
Let $(X,\theta)$ be a compact complex manifold $X$ equipped with
a smooth (but not necessarily positive) closed $(1,1)-$form $\theta.$
By a well-known envelope construction this data determines a canonical
$\theta-$psh function $u_{\theta}$ which, in the case when the cohomology
class $[\theta]$ is Kähler, is in the Hölder space $C^{1,\alpha}$
for any $\alpha\in]0,1$ (but, typically, $u_{\theta}$ is not $C^{2}-$smooth).
We introduce a family $u_{\beta}$ of regularizations of $u_{\theta}$,
parametrized by a positive number $\beta,$ where $u_{\beta}$ is
defined as the unique smooth solution of a complex Monge-Ampère equation
of Aubin-Yau type. It is shown that, as $\beta\rightarrow\infty,$
the functions $u_{\beta}$ converge to the envelope $u_{\theta}$
uniformly on $X$ in the strongest possible Hölder sense. A generalization
of this result to the case of a nef and big cohomology class is also
obtained. As a consequence new PDE proofs are obtained for the regularity
results for envelopes in \cite{b-d} (which, however, are weaker than
the results in \cite{b-d} in the case of a non-nef big class). Applications
to the regularization of $\omega-$psh functions and geodesic rays
in the closure of the space of Kähler metrics are given. As briefly
explained there is a statistical mechanical motivation for this regularization
procedure, where $\beta$ appears as the inverse temperature. This
point of view also leads to an interpretation of $u_{\beta}$ as a
``transcendental'' Bergman metric.
\end{abstract}

\maketitle
\tableofcontents{}

\section{Introduction}

Let $X$ be a compact complex manifold equipped with a smooth closed
$(1,1)-$ form $\theta$ on $X$ and denote by $[\theta]$ the corresponding
class in the cohomology group $H^{1,1}(X,\R).$There is a range of
positivity notions for such cohomology classes, generalizing the classical
positivity notions in algebraic geometry. The algebro-geometric situation
concerns the special case when $X$ is projective variety and the
cohomology class in question has integral periods, which equivalently
means that the class may be realized as the first Chern class $c_{1}(L)$
of a line bundle $L$ over $X$ \cite{dem02,dem0,d-p}. Accordingly,
general cohomology classes in $H^{1,1}(X,\R)$ are some times referred
to as\emph{ transcendental} classes and the corresponding notions
of positivity may be formulated in terms of the convex subspace of
positive currents in the cohomology class - the strongest notion of
positivity is that of a\emph{ Kähler class, }which means that the
class contains a Kähler metric, i.e. a smooth positive form (see \cite{d-p}
for equivalent numerical characterizations of positivity). In general,
once the reference element $\theta$ in the cohomology class in question
has been fixed the subspace of positive forms may be identified (mod
$\R)$ with the space $PSH(X,\theta)$ of all \emph{$\theta-$plurisubharmonic
}function $(\theta-$psh, for short), i.e. all integrable upper semi-continuous
functions $u$ on $X$ such that 
\[
\theta+dd^{c}u\geq0,\,\,\,\,\,\,dd^{c}:=i\partial\bar{\partial}
\]
holds in the sense of currents (in the integral case the space $PSH(X,\theta)$
may be identified with the space of all singular positively curved
metrics on the corresponding line bundle $L).$ When the class $[\theta]$
is pseudo-effective, i.e. it contains a positive current, there is
a canonical element in $PSH(X,\theta)$ defined as the following envelope\emph{:}
\[
u_{\theta}(x):=\sup\{u(x):\,\,\,u\leq0,\,\,\,u\in PSH(X,\theta)\},
\]
defining a $\theta-$plurisubharmonic function with minimal singularities
in the sense of Demailly \cite{dem02,begz}. 

In this paper we introduce a natural family of regularizations $u_{\beta}$
of the envelope $u_{\theta},$ indexed by a positive real parameter
$\beta,$ where $u_{\beta}$ is determined by an auxiliary choice
of volume form $dV;$ the functions $u_{\beta}$ will be defined as
solutions to certain complex Monge-Ampère equations, parametrized
by $\beta.$ Several motivations for studying the functions $u_{\beta}$
and their asymptotics as $\beta\rightarrow\infty,$ will be given
below. For the moment we just mention that $u_{\beta}$ can, in a
certain sense, be considered as a ``transcendental'' analog of the
Bergman metric for a high power of a line bundle $L$ over $X$ and
moreover from a statistical mechanical point of view the limit $\beta\rightarrow0$
appears as a zero-temperature limit.

In order to introduce the precise setting and the main results we
start with the simplest case of a\emph{ Kähler class} $[\theta].$
First note that the envelope construction above can be seen as a generalization
of the process of replacing the graph of a given smooth functions
with its convex hull. By this analogy it is already clear from the
one-dimensional case that $u_{\theta}$ will almost never by $C^{2}-$smooth
even if the class $[\theta]$ is Kähler (unless $\theta$ is semi-positive,
so that $u_{\theta}=0).$ However, by the results in \cite{b-d} the
complex Hessian of the function $u_{\theta}$ is always locally bounded
and in particular $u_{\theta}$ is in the Hölder space $\mathcal{C}^{1,\alpha}(X)$
for any $\alpha\in]0,1[$ (see also \cite{berm1} for a slightly more
precise result in the case of a class with integral periods). Fixing
a volume form $dV$ we consider, for $\beta$ a fixed positive number,
the following complex Monge-Ampère equations for a smooth function
$u_{\beta}:$
\begin{equation}
(\theta+dd^{c}u_{\beta})^{n}=e^{\beta u_{\beta}}dV\label{eq:ma eq in intro}
\end{equation}
 By the seminal results of Aubin \cite{au} and Yau \cite{y} there
exists indeed a unique smooth solution $u_{\beta}$ to the previous
equation. In fact, any smooth solution is automatically\emph{ }$\theta-$psh
and the form $\omega_{\beta}:=\theta+dd^{c}u_{\beta}$ defines a Kähler
metric in $[\theta].$ 
\begin{thm}
\label{thm:main thm khler case intro}Let $\theta$ be a smooth $(1,1)-$form
on a compact complex manifold $X$ such that $[\theta]$ is a Kähler
class. Denote by $u_{\theta}$ the corresponding $\theta-$psh envelope
and by $u_{\beta}$ the unique smooth solution of the complex Monge-Ampère
equations \ref{eq:ma eq in intro} determined by $\theta$ and a fixed
volume form $dV$ on $X.$ Then, as $\beta\rightarrow\infty,$ the
functions $u_{\beta}$ converge to $u_{\theta}$ in $\mathcal{C}^{1,\alpha}(X)$
for any $\alpha\in]0,1[,$ with a uniform bound on $dd^{c}u_{\beta}.$
\end{thm}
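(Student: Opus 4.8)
The plan is to establish \emph{a priori} bounds for the solutions $u_\beta$ of \eqref{eq:ma eq in intro} that are uniform in $\beta$ and strong enough to pass to the limit, and then to identify $\lim_{\beta\to\infty}u_\beta$ with the envelope by a comparison argument. Throughout I would fix a K\"ahler form $\omega_0=\theta+dd^c\psi_0$ in the class $[\theta]$, with $\psi_0$ smooth, and write $\omega_\beta:=\theta+dd^cu_\beta$ and $\tilde u_\beta:=u_\beta-\psi_0$, so that $\omega_\beta=\omega_0+dd^c\tilde u_\beta$ is K\"ahler. \emph{Step 1 (uniform $\mathcal{C}^0$--bounds).} At a maximum point of $u_\beta$ one has $dd^cu_\beta\le0$, hence $0<\omega_\beta\le\theta$ there, which forces $\theta>0$ at that point and $\omega_\beta^n\le\theta^n$; by \eqref{eq:ma eq in intro} this gives $e^{\beta\max_Xu_\beta}\le\sup_X(\theta^n/dV)$, so $\max_Xu_\beta\le\epsilon_\beta$ with $\epsilon_\beta\to0$. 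For a lower bound, let $v$ be the smooth Aubin--Yau solution of $(\theta+dd^cv)^n=c_0\,dV$ with $c_0:=\int_X\theta^n/\int_XdV>0$, normalised by $\sup_Xv=0$; for $\tau_\beta:=\min(\beta^{-1}\log c_0,\,0)$ the function $v+\tau_\beta$ is a subsolution of \eqref{eq:ma eq in intro}, and the maximum principle applied to $v+\tau_\beta-u_\beta$ gives $u_\beta\ge v+\tau_\beta\ge\inf_Xv-|\log c_0|$. Thus $\|u_\beta\|_{L^\infty(X)}$ is bounded uniformly for $\beta\ge1$, while $\limsup_{\beta\to\infty}\max_Xu_\beta\le0$.

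\emph{Step 2 (uniform Laplacian estimate).} Rewriting \eqref{eq:ma eq in intro} as $(\omega_0+dd^c\tilde u_\beta)^n=e^{\tilde F_\beta}\,\omega_0^n$ with $\tilde F_\beta:=\beta u_\beta+g$, $g:=\log(dV/\omega_0^n)$, I would run the Aubin--Yau second order estimate on $H_\beta:=\log\operatorname{tr}_{\omega_0}\omega_\beta-(B+1)\tilde u_\beta$, where $-B$ (with $B\ge0$) is a lower bound for the holomorphic bisectional curvature of $\omega_0$. Using $\Delta_{\omega_\beta}\tilde u_\beta=n-\operatorname{tr}_{\omega_\beta}\omega_0$, at an interior maximum of $H_\beta$ one gets
\[
0\ \ge\ \frac{\Delta_{\omega_0}\tilde F_\beta}{\operatorname{tr}_{\omega_0}\omega_\beta}+\operatorname{tr}_{\omega_\beta}\omega_0-C,
\]
with $C$ depending only on $\omega_0$. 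The point I would exploit is that, since $dd^cu_\beta=\omega_\beta-\theta$,
\[
\Delta_{\omega_0}\tilde F_\beta=\beta\operatorname{tr}_{\omega_0}\omega_\beta-\beta\operatorname{tr}_{\omega_0}\theta+\Delta_{\omega_0}g,
\]
so that $\big(\operatorname{tr}_{\omega_0}\omega_\beta\big)^{-1}\Delta_{\omega_0}\tilde F_\beta=\beta-\big(\operatorname{tr}_{\omega_0}\omega_\beta\big)^{-1}(\beta\operatorname{tr}_{\omega_0}\theta-\Delta_{\omega_0}g)$ and the leading term $+\beta$ has the \emph{favourable} sign --- precisely because $t\mapsto e^{\beta t}$ is increasing, exactly as in the Aubin--Yau treatment of negatively curved K\"ahler--Einstein metrics. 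Dropping $\operatorname{tr}_{\omega_\beta}\omega_0\ge0$, the inequality forces $\operatorname{tr}_{\omega_0}\omega_\beta$ at the maximum of $H_\beta$ to be bounded by a constant depending only on $\omega_0$ and $\|\operatorname{tr}_{\omega_0}\theta\|_\infty$ as soon as $\beta$ exceeds an explicit threshold; combined with the $\mathcal{C}^0$--bound on $\tilde u_\beta$ this yields $\operatorname{tr}_{\omega_0}\omega_\beta\le C'$ on $X$, with $C'$ independent of $\beta$. Then $\Delta_{\omega_0}u_\beta=\operatorname{tr}_{\omega_0}\omega_\beta-\operatorname{tr}_{\omega_0}\theta$ is uniformly bounded, and elliptic $L^p$--estimates together with Sobolev embedding give a uniform bound for $u_\beta$ in $\mathcal{C}^{1,\alpha}(X)$ for every $\alpha\in\,]0,1[$.

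\emph{Step 3 (identification of the limit).} By Step 2 and the Arzel\`a--Ascoli theorem it suffices to show that any limit $u_\infty$ of a sequence $u_{\beta_j}$, $\beta_j\to\infty$, in $\mathcal{C}^{1,\alpha}(X)$ equals $u_\theta$. Being a uniform limit of $\theta$-psh functions, $u_\infty$ is $\theta$-psh, and $u_\infty=\lim_ju_{\beta_j}\le\limsup_j\epsilon_{\beta_j}=0$, so $u_\infty\le u_\theta$ by the definition of the envelope. For the reverse inequality I would fix a smooth $\rho$ with $\theta+dd^c\rho=\omega_0>0$ and, after subtracting a constant, $\rho\le0$, and put $w_s:=(1-s)u_\theta+s\rho$ for $s\in\,]0,1[$. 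Then $w_s$ is a bounded $\theta$-psh function with $w_s\le0$, and expanding its Monge--Amp\`ere measure by Bedford--Taylor positivity, $(\theta+dd^cw_s)^n\ge s^n\omega_0^n\ge\delta_s\,dV$ for some $\delta_s>0$. As in Step 1, $w_s+\min(\beta^{-1}\log\delta_s,0)$ is a subsolution of \eqref{eq:ma eq in intro}, so the comparison principle for the complex Monge--Amp\`ere operator gives $u_\beta\ge w_s-C_s\beta^{-1}$; letting $\beta=\beta_j\to\infty$ yields $u_\infty\ge w_s$, and then $s\to0^+$ yields $u_\infty\ge u_\theta$. Hence $u_\infty=u_\theta$; since the limit does not depend on the subsequence, $u_\beta\to u_\theta$ in $\mathcal{C}^{1,\alpha}(X)$ for every $\alpha\in\,]0,1[$.

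The main obstacle is Step 2: a direct application of Yau's estimate is hopeless because the right-hand side $e^{\beta u_\beta}\,dV$ of \eqref{eq:ma eq in intro} has derivatives of size $\beta$, so the constants produced by the usual argument blow up as $\beta\to\infty$. What rescues the estimate, uniformly in $\beta$, is exactly the monotonicity of the nonlinearity $t\mapsto e^{\beta t}$, which makes the dangerous second-order term enter Yau's inequality with the right sign; the possible non-positivity of $\theta$ then contributes only harmless lower-order terms.
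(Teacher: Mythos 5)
Your Steps 1 and 2---the uniform $C^0$ bounds by barrier construction and the Aubin--Yau/Siu Laplacian estimate exploiting the favourable sign of the $\beta\,\mathrm{tr}_{\omega_0}\omega_\beta$ term for $\beta$ large---track the paper's Lemma \ref{lem:unif estimate in the khler case} and Proposition \ref{prop:laplace estimate in khler case} closely and are correct. Your Step 3, however, is genuinely different from the paper's and, for the K\"ahler case, more elementary. The paper identifies the limit via convergence in energy (Theorem \ref{thm:conv in energy khler case}), a variational argument which requires the \emph{a priori} regularity input that $MA(u_\theta)$ have finite entropy relative to $dV$; this is extracted from the regularity theorem of Berman--Demailly \cite{b-d}. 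Your comparison argument with the barriers $w_s=(1-s)u_\theta+s\rho$ avoids this entirely: it uses only that $u_\theta$ is a bounded $\theta$-psh function (automatic since $[\theta]$ is K\"ahler), Bedford--Taylor positivity of the mixed terms in $\left((1-s)(\theta+dd^cu_\theta)+s\omega_0\right)^n\geq s^n\omega_0^n$, and the same comparison principle (Lemma \ref{lem:cons of compar principle in khler case}) already used for the $C^0$ bound. This is noteworthy: the paper explicitly observes that a direct identification of the limit not invoking \cite{b-d} would, together with the uniform Laplacian estimate passed to the limit, yield a new \emph{a priori}-estimate proof of the bounded-complex-Hessian regularity of the envelope $u_\theta$ in the K\"ahler case---and your argument accomplishes exactly that. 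What the paper's variational route buys in exchange is generality: it extends to big cohomology classes (Theorem \ref{thm:main thm big case intro}), where $u_\theta$ has $-\infty$ singularities so that your barrier $w_s$ is no longer bounded, a global Laplacian estimate is out of reach, and the convergence in energy is precisely the input needed for the Kolodziej-type $L^\infty$ estimates behind Proposition \ref{prop:uniform convergence in big case}.
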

In particular, the previous theorem yields a new direct PDE proof
of the Laplacian bound on $u_{\theta}$ in \cite{b-d} in the case
of a Kähler class, with a rather explicit geometrical control on the
bound. More generally, the proof reveals that the result remains valid
if $dV$ is replaced by any family $dV_{\beta}$ of volume forms such
that $dd^{c}(\log(dV_{\beta}/dV_{1})=o(\beta).$ As a consequence
the convergence result above admits the following geometric formulation:
let $\omega_{\beta}$ be a family of Kähler metrics in $[\theta]$
satisfying the following twisted Kähler-Einstein equation: 
\[
\mbox{Ric \ensuremath{\omega_{\beta}=-\beta\omega_{\beta}+\beta\theta+o(\beta),}}
\]
where $\mbox{Ric \ensuremath{\omega_{\beta}}}$ denotes the form representing
the Ricci curvature of the Kähler metric $\omega_{\beta}$ and $o(\beta)$
denotes a family of forms on $X$ such that $o(\beta)/\beta\rightarrow0$
in the $L^{\infty}-$sense as $\beta\rightarrow\infty.$ Then the
previous theorem says that $\omega_{\beta}$ is uniformly bounded
and converges to $\theta+dd^{c}u_{\theta}$ in the sense of currents
and the normalized potentials of $\omega_{\beta}$ converge in $\mathcal{C}^{1,\alpha}(X)$
to $u_{\theta}.$

More generally, we will consider the case when the cohomology class
$[\theta]$ is merely assumed to be \emph{big; }this is the most general
setting where complex Monge-Ampère equations of the form make sense
\cite{begz}. The main new feature in this general setting is the
presence of $-\infty-$ singularities of all $\theta-$psh functions
on $X.$ Such singularities are, in general, inevitable for cohomological
reasons. Still, by the results in \cite{begz}, the corresponding
complex Monge-Ampère equations admit a unique $\theta-$psh function
$u_{\beta}$ with\emph{ }minimal singularities\emph{; }in particular
its singularities can only appear along a certain complex subvariety
of $X,$ determined by the class $[\theta]$, whose complement is
called the\emph{ Kähler locus} $\Omega$ of $[\theta]$ (or the\emph{
ample locus)} introduced in \cite{bo} (which in the algebro-geometric
setting corresponds to the complement of the \emph{augmented base
locus} of the corresponding line bundle). Moreover, in the case when
the class $[\theta]$ is also assumed to be\emph{ nef }the solution
$u_{\beta}$ is known to be smooth on $\Omega,$ as follows from the
results in \cite{begz}. In this general setting our main result may
be formulated as follows:
\begin{thm}
\label{thm:main thm big case intro}Let $\theta$ be a smooth $(1,1)-$form
on a compact complex manifold $X$ such that $[\theta]$ is a big
class. Then, as $\beta\rightarrow\infty,$ the functions $u_{\beta}$
converge to $u_{\theta}$ uniformly, in the sense that $\left\Vert u_{\beta}-u_{\theta}\right\Vert _{L^{\infty}(X)}\rightarrow0.$
Moreover, if the class $[\theta]$ is also assumed to be nef, then
the convergence holds in $\mathcal{C}_{loc}^{1,\alpha}(\Omega)$ on
the Kähler locus $\Omega$ of $X.$ 
\end{thm}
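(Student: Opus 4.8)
The plan is to prove the two one-sided estimates $\limsup_{\beta\to\infty}(u_\beta-u_\theta)\le 0$ and $\liminf_{\beta\to\infty}(u_\beta-u_\theta)\ge 0$ in the sup norm. The first is elementary. The second, which carries the weight of the argument, I would obtain by exploiting a monotonicity of $\beta\mapsto u_\beta$, passing to the limit in the Monge--Amp\`ere equation \eqref{eq:ma eq in intro}, and using the characterization of the envelope $u_\theta$ through its Monge--Amp\`ere measure. Throughout I would work within the pluripotential calculus for $\theta$-psh functions with minimal singularities on a big class from \cite{begz}: the non-pluripolar Monge--Amp\`ere operator, its continuity along monotone sequences, and the comparison and domination principles.

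For the \emph{upper bound}: since $u_\beta-\sup_X u_\beta$ is $\theta$-psh and $\le 0$, the very definition of the envelope gives $u_\beta\le u_\theta+\sup_X u_\beta$, so it is enough to show $\sup_X u_\beta=O(1/\beta)$. I would get this from the maximum principle applied to \eqref{eq:ma eq in intro} on the K\"ahler locus $\Omega$, where $u_\beta$ is bounded and, because $u_\beta\to-\infty$ near $X\setminus\Omega$, attains values arbitrarily close to its supremum: at a (near-)maximal point $x_0$ one has $dd^cu_\beta\le 0$, hence $0\le\theta+dd^cu_\beta\le\theta$ at $x_0$, which forces $\theta(x_0)\ge 0$ and $(\theta+dd^cu_\beta)^n(x_0)\le\theta^n(x_0)\le A\,dV(x_0)$ with $A:=\sup_{\{\theta\ge0\}}(\theta^n/dV)<\infty$; thus $e^{\beta u_\beta(x_0)}\le A$ and $\sup_X u_\beta\le(\log A)/\beta$.

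For the \emph{lower bound}: after multiplying $dV$ by a large constant --- which shifts each $u_\beta$ by a constant tending to $0$ and changes neither $u_\theta$ nor the hypotheses --- I may assume $\sup_X u_\beta\le 0$ for all $\beta$. Then for $\beta<\beta'$ the inequality $e^{\beta u_\beta}\ge e^{\beta'u_\beta}$ exhibits $u_\beta$ as a subsolution of the $\beta'$-equation, so the comparison principle gives $u_\beta\le u_{\beta'}$; hence $u_\beta$ increases to $u_\infty:=(\lim_\beta u_\beta)^{*}\le u_\theta$, a $\theta$-psh function with minimal singularities. Letting $\beta\to\infty$ in \eqref{eq:ma eq in intro}: continuity of the non-pluripolar Monge--Amp\`ere operator along increasing sequences gives $(\theta+dd^cu_\beta)^n\to(\theta+dd^cu_\infty)^n$, while $0\le e^{\beta u_\beta}\le e^{\beta u_\infty}$ together with dominated convergence gives $e^{\beta u_\beta}dV\to 0$ on $\{u_\infty<0\}$; hence $(\theta+dd^cu_\infty)^n$ is carried by the contact set $\{u_\infty=0\}\subset\{u_\theta=0\}$. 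Since $u_\infty$ has full Monge--Amp\`ere mass, the orthogonality relation for envelopes and the domination principle of \cite{begz} (equivalently, the variational description of $u_\theta$ as the maximizer of the Monge--Amp\`ere energy among $\theta$-psh $u\le 0$, cf.\ \cite{b-d,begz}) force $u_\infty=u_\theta$. Thus $u_\beta\uparrow u_\theta$ quasi-everywhere.

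It then remains to upgrade the monotone quasi-everywhere convergence to the stated norms. Combining $u_\beta\le u_\theta+O(1/\beta)$ with $u_\beta\uparrow u_\theta$, a Dini/Hartogs-type argument yields $\Vert u_\beta-u_\theta\Vert_{L^\infty(X)}\to 0$: on a compact $K\Subset\Omega$ the local a priori estimates of \cite{b-d}, uniform in $\beta$, make $\{u_\beta\}$ equicontinuous, so the monotone limit is continuous, equals $u_\theta$, and the convergence is uniform on $K$; on a fixed neighbourhood of the complement $X\setminus\Omega$ of the K\"ahler locus one uses instead that all the $u_\beta$ and $u_\theta$ have the same minimal-singularity type, while $(\theta+dd^cu_\beta)^n=e^{\beta u_\beta}dV$ becomes negligibly small there, to confine $u_\theta-u_\beta$ uniformly. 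When $[\theta]$ is in addition nef, $u_\beta$ is smooth on $\Omega$ by \cite{begz}; rewriting \eqref{eq:ma eq in intro} locally as a non-degenerate complex Monge--Amp\`ere equation whose right-hand side is controlled by the bounds already obtained, the estimates of \cite{b-d} give a bound on $\Vert u_\beta\Vert_{\mathcal{C}^{1,\alpha}(K)}$ uniform in $\beta$, which interpolated with the $L^\infty$ convergence gives convergence in $\mathcal{C}^{1,\alpha}_{loc}(\Omega)$ for every $\alpha\in]0,1[$; alternatively this last step can be extracted from Theorem \ref{thm:main thm khler case intro} applied to the K\"ahler forms $\theta+\epsilon\omega$, using stability of the Monge--Amp\`ere equation as $\epsilon\to 0$ and $u_{\theta+\epsilon\omega}\downarrow u_\theta$. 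I expect the real obstacle to lie precisely in this last stage --- establishing uniformity in $\beta$ near $X\setminus\Omega$ and uniform a priori $\mathcal{C}^{1,\alpha}$ estimates on $\Omega$ despite the exponentially degenerating right-hand side --- rather than in the identification of the limit.
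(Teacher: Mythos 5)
Your plan is structurally different from the paper's: where you use monotonicity in $\beta$ together with the domination principle to identify the increasing limit $u_\infty$ with $u_\theta$, the paper instead proves convergence \emph{in energy} via a variational argument (maximizing $\mathcal{G}_\beta=\mathcal{E}-\mathcal{L}_\beta$ and using Jensen's inequality with the finite-entropy property of $MA(u_\theta)$), and then feeds this into the $L^\infty$-stability machinery of Guedj--Zeriahi. Your identification of the limit is appealing and, granted the continuity of the non-pluripolar Monge--Amp\`ere operator along increasing sequences from \cite{begz}, it does correctly yield $u_\beta\uparrow u_\theta$ quasi-everywhere (indeed it even implies energy convergence, since $\mathcal{E}$ is monotone and continuous along such sequences). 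But two steps do not go through as written.

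First, your upper bound $\sup_X u_\beta\le(\log A)/\beta$ is obtained by evaluating $dd^cu_\beta\le0$ at an interior maximum in $\Omega$, which presupposes $C^2$-regularity of $u_\beta$ on $\Omega$. In the general big case that regularity is precisely what is \emph{not} known (the paper flags it as an open problem); it holds only under the additional nef hypothesis. The paper sidesteps this in Lemma \ref{lem:refined upper bound} by using the domination principle together with the bound $MA(u_\theta)\le 1_D e^{C}dV$ from \cite{b-d}, an argument that needs no regularity of $u_\beta$. Second — and this is the substantive gap, as you yourself anticipate — the passage from the monotone q.e.\ convergence to $\Vert u_\beta-u_\theta\Vert_{L^\infty(X)}\to0$ is not established. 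Dini on compacts $K\Subset\Omega$ would need equicontinuity of $\{u_\beta\}$, but the estimates of \cite{b-d} are a priori estimates for the \emph{envelope} $u_\theta$, not for the family $u_\beta$; the Laplacian bounds for $u_\beta$ on $\Omega$ are only proved (in the paper) under the nef hypothesis, via a perturbation $\theta_\epsilon=\theta+\epsilon\omega_0$. And near $X\setminus\Omega$ the assertion that equal singularity type plus smallness of $MA(u_\beta)$ ``confines $u_\theta-u_\beta$ uniformly'' is not an argument: the difference is bounded for each $\beta$, but monotone pointwise decrease to $0$ gives no uniform rate there. The paper closes this by showing that convergence in energy implies convergence in capacity and then invoking Prop.\ 4.2 of \cite{g-z2} (a Kolodziej-type stability estimate), using the uniform $L^p$ bound $MA(u_\beta)/dV\le e^C$ that follows from the refined upper bound; that mechanism, or something doing the same work, is missing from your proposal.
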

In particular, in the general setting of a big class the proof of
the previous theorem yields a new proof of a weaker form of the regularity
result in \cite{b-d} saying that 
\begin{equation}
(\theta+dd^{c}u_{\theta})^{n}\leq1_{D}\theta^{n},\,\,\,\,D=\{x\in X:\,u_{\theta}(x)=0\}\label{eq:bound on ma intro}
\end{equation}
Even though this bound is considerably weaker than the general regularity
result in \cite{b-d} it appears to be adequate for all current complex
geometric applications of envelopes as above, such as the recent proof
of the duality between the pseudoeffective and the movable cone on
a projective manifold in \cite{WN}. 

Some further remarks are in order. First of all, as pointed out above,
it was previously known that the norm $\left\Vert u_{\beta}-u_{\theta}\right\Vert _{L^{\infty}(X)}$
is finite for any fixed $\beta$ (since $u_{\beta}$ and the envelope
$u_{\theta}$ both have minimal singularities) and the thrust of the
first statement in the previous theorem is thus that the norm in fact
tends to zero. This global uniform convergence  is considerably stronger
than a a local uniform convergence on $\Omega.$ Secondly, it should
be stressed that, as shown in \cite{b-d}, the complex Hessian of
the envelope $u_{\theta}$ is locally bounded on $\Omega$ for\emph{
any} big class $[\theta]$ and hence it seems natural to expect that
the local convergence on $\Omega$ in the previous theorem always
holds in the $\mathcal{C}_{loc}^{1,\alpha}(\Omega)-$topology, regardless
of the nef assumption. However, already the smoothness on $\Omega$
of solutions of complex Monge-Ampère equations of the form \ref{eq:ma eq in intro}
is an open problem; in fact, it even seems to be unknown whether there
always exists a $\theta-$psh functions with minimal singularities,
which is smooth on $\Omega.$ On the other hand, for special big classes
$[\theta]$, namely those which admit an appropriate Zariski decomposition
on some resolution of $X,$ the regularity and convergence problem
can be reduced to the nef case (in the line bundle case this situation
appears if the corresponding section ring is finitely generated).

\subsection{Degenerations induced by a divisor and applications to geodesic rays}

In the case of a Kähler class and when $\theta$ is positive, i.e.
$\theta$ is Kähler form, it follows immediately from the definition
that $u_{\theta}=0$ and in this case the convergence in Theorem \ref{thm:main thm khler case intro}
holds in the $\mathcal{C}^{\infty}-$sense, as recently shown in \cite{jmr}
using a completely different proof. However, as shown in \cite{r-w3,r-w1}
in the integral case $[\omega]=c_{1\text{ }}(L),$ a non-trivial variant
of the previous envelopes naturally appear in the geometric context
of\emph{ test configurations} for the polarized manifold $(X,L),$
i.e. $\C^{*}-$equivariant polarized deformations $(\mathcal{X},\mathcal{L})$
of $(X,L)$ and they can be used to construct (weak) \emph{geodesic
rays} in the space of all Kähler metrics in $[\omega].$ Such test
configurations were introduced by Donaldson in his algebro-geometric
definition of K-stability of a polarized manifold $(X,L),$ which
according to the the Yau-Tian-Donaldson is equivalent to the existence
of a Kähler metric in the class $c_{1}(L)$ with constant scalar curvature.
Briefly, K-stability of $(X,L)$ amounts to the positivity of the
Donaldson-Futaki invariants for all test configurations, which in
turn is closely related to the large time asymptotics of Mabuchi's
K-energy functional along the corresponding geodesic rays (see \cite{p-s2}
and references therein). 

Let us briefly explain how this fits into the present setup in the
special case of the test configurations defined by the deformation
to the normal cone of a divisor $Z$ in $X$ (e.g. a smooth complex
hypersurface in $X).$ First we consider the following complex Monge-Ampère
equations degenerating along the divisor $Z,$ 
\[
(\omega-\lambda\theta_{L}+dd^{c}u)^{n}=e^{\beta u}\left\Vert s\right\Vert ^{2\lambda\beta}dV,
\]
 where we have realized $Z$ as the zero-locus of a holomorphic section
$s$ of a line bundle $L$ over $X$ equipped with a fixed Hermitian
metric $\left\Vert \cdot\right\Vert $ with curvature form $\theta_{L}$
and where $\lambda\in[0,\infty[$ is an additional fixed parameter.
As is well-known, for $\lambda$ sufficiently small $(\lambda\leq\epsilon)$
there is, for any $\beta>0,$ a unique continuous $\omega-\lambda\theta_{L}-$psh
solution $u_{\beta,\lambda}$ to the previous equation, which is smooth
on $X-Z.$ We will show that, when $\beta\rightarrow\infty,$ the
solutions $u_{\beta,\lambda}$ converge in $\mathcal{C}^{1,\alpha}(X)$
to a variant of the envelope $u_{\theta},$ that we will (abusing
notation slightly) denote by $u_{\lambda}:$ 
\[
u_{\lambda}(x):=\sup\{u(x):\,\,\,u\leq-\lambda\log\left\Vert s\right\Vert ^{2}\,\,\,u\in PSH(X,\omega-\lambda\theta_{L})\}
\]
(see section \ref{sub:Degenerations-induced-by}). It may identified
with the envelopes with prescribed singularities introduced in \cite{berm1}
in the context of Bergman kernel asymptotics for holomorphic sections
vanishing to high order along a given divisor (see \cite{r-w1} for
detailed regularity results for such envelopes and the relations to
Hele-Shaw type flows and \cite{p-s} for related asymptotic results
in the toric case). 

Remarkably, as shown in \cite{r-w3,r-w1} (in the line bundle case)
taking the Legendre transform of the envelopes $u_{\lambda}+\lambda\log\left\Vert s\right\Vert ^{2}$
with respect to $\lambda$ produces a geodesic ray in the closure
of the space of Kähler potentials in $[\omega],$ which coincides
with the $C^{1,\alpha}-$geodesic constructed by Phong-Sturm \cite{p-s1b,p-s3}
(in general, the geodesics are not $C^{2}-$smooth). Here, building
on \cite{r-w3,r-w1}, we show that the logarithm of the Laplace transform,
with respect to $\lambda,$ of the Monge-Ampère measures of the envelopes
$u_{\lambda}$ defines a family of subgeodesics in the space of Kähler
potentials converging to the corresponding geodesic ray (see Cor \ref{cor:geod ray in text}).
In geometric terms the result may be formulated as follows
\begin{cor}
\label{cor:geod intro}Let $\omega$ be a Kähler form, and fix a constant
$c$ such that $[\omega]-c[Z]$ is a Kähler class. Let $\omega_{\beta,\lambda}$
be a family of currents in $[\omega]-\lambda[Z],$ defining smooth
Kähler metrics away from the support of $Z$ and satisfying 
\[
\mbox{Ric \ensuremath{\omega_{\beta,\lambda}=-\beta\omega_{\beta,\lambda}+\beta(\omega-\lambda[Z])+o(\beta)}}
\]
Then 
\[
\varphi_{\beta}^{t}:=\frac{1}{\beta}\log\int_{[0,c]}d\lambda e^{\beta(\lambda-c)t}\frac{\omega_{\beta,\lambda}^{n}}{\omega^{n}}
\]
defines a family of subgeodesics converging in $C^{0}(X\times[0,T]),$
for any fixed $T>0,$ to a geodesic ray $\varphi^{t}$ associated
to the test configuration $(\mathcal{X},\mathcal{L}_{c})$ defined
by the deformation to the normal cone of $Z.$ Moreover, in the case
when $[\omega]\in H^{2}(X,\Q)$ the convergence holds in $C^{0}(X\times[0,\infty].$
\end{cor}
This can be seen as a ``transcendental'' analogue of the approximation
result of Phong-Sturm \cite{p-s-4}, which uses Bergman geodesic rays.
However, while the latter convergence result holds point-wise almost
everywhere and for $t$ fixed, an important feature of the convergence
in the previous corollary is that it is \emph{uniform,} even when
$t$ ranges in all of $[0,\infty[.$ More generally, we will establish
an extension of the previous result to the case when $[\omega]-c[Z]$
(or equivalently $\mathcal{L}_{c})$ is merely assumed big.

The motivation for considering this ``transcendental'' approximation
scheme for geodesic rays is two-fold. First, as is well-known, recent
examples indicate that a more ``transcendental'' notion of K-stability
is needed for the validity of the Yau-Tian-Donaldson conjecture, obtained
by relaxing the notion of a test configuration. One such notion, called
\emph{analytic test configurations,} was introduced in \cite{r-w3}
and as shown in op. cit. any such test configuration determines a
weak geodesic ray, which a priori has very low regularity. However,
the approximation scheme above could be used to regularize the latter
weak geodesic rays, which opens the door for defining a notion of
generalized Donaldson-Futaki invariant by studying the large time
asymptotics of the K-energy functional along the corresponding regularizations
(as in the Bergman metrics approach in \cite{p-s-4}). In another
direction, the approximation scheme above should be useful when considering
the analog of K-stability for a non-integral Kähler class $[\omega]$
(compare section \ref{sec:Applications-to-geodesic}). The previous
corollary is just a first illustration of this approximation scheme
and we leave the development of more general approximation results
for the future.

\subsubsection*{On the proofs}

Next, let us briefly discuss the proofs of the previous theorems,
starting with the case of a Kähler class. First, the weak convergence
of $u_{\beta}$ towards $u_{\beta}$ (i.e. convergence in $L^{1}(X))$
is proved using variational arguments (building on \cite{bbgz}).
In fact, we will give two different proofs of this convergence, where
the first one is variational and has two merits: $(i)$ it generalizes
directly to the case of a big class and $(ii)$ it applies when $dV$
is replaced with a quite singular measure $\mu_{0}$ (satifying a
Bernstein-Markov property). The second proof uses a direct simple
maximum principle argument.

In either way, to conclude the proof of Theorem \ref{thm:main thm khler case intro}
we just have to provide a priori estimates on $u_{\beta},$ which
are uniform in $\beta$ and which we deduce from Siu's variant of
the Aubin-Yau Laplacian estimates. In particular, this implies convergence
in $L^{\infty}(X).$ However, in the case of a general big class,
in order to establish the global $L^{\infty}-$convergence, we need
to take full advantage of the variational argument, namely that the
argument shows that $u_{\beta}$ converges to $u_{\theta}$\emph{
in energy }and not only in $L^{1}(X).$ This allows us to invoke the
$L^{\infty}-$stability results in \cite{g-z2}. Briefly, the point
is that convergence in energy implies convergence in capacity, which
together with an $L^{p}-$control on the corresponding Monge-Ampère
measures opens the door for Kolodziej type $L^{\infty}-$estimates.
Moreover, a variant of the maximum principle argument used in the
case of the Kähler class, based on the theory of viscosity subsolutions
developed in \cite{e-g-z}, yields the bound \ref{eq:bound on ma intro}
(only the local case of the results in \cite{e-g-z} is needed).

In particular, in the Kähler case we obtain a new simple PDE proof
of the regularity result for $u_{\theta}$ in \cite{b-d}, based on
a priori estimates, which should be contrasted with the proof in \cite{b-d},
which uses completely different pluripotential theoretic arguments.
These latter argument involve Demailly's deep extension of the Kiselman
technique for attenuating singularities (compare \cite{dem0})  and
they have the virtue of applying in the general setting of a big class.
Conversely, it would be very interesting if a similar pluripotential
theoretic argument could be used to establish the conjectural smoothness
of $u_{\beta}$ on the Kähler locus $\Omega,$ thus avoiding the difficulties
which appear when trying to use a priori estimates in the setting
of a big class. There are certainly strong indications that this can
be done (see for example Remark \ref{rem: apriori}), but we shall
leave this problem for the future.

\subsection{\label{sub:Further-background-and}Further background and motivation}

Before turning to the proofs of the results introduced above it may
be illuminating to place the result into a geometric and probabilistic
context (see also Section \ref{sub:Transcendental-Bergman-kernels}
for the relation to Bergman kernel asymptotics).

\subsubsection*{Kähler-Einstein metrics and the continuity method}

First of all we recall that the main geometric motivation for studying
complex Monge-Ampère equations of the form \ref{eq:ma eq in intro}
comes from \emph{Kähler-Einstein geometry} and goes back to the seminal
works of Aubin \cite{au} and Yau \cite{y} in setting when $X$ is
a canonically polarized projective algebraic variety, i.e. the canonical
line bundle $K_{X}:=\Lambda^{n}T^{*}X$ of $X$ is ample. If the form
$\theta$ is taken as a Kähler metric $\omega$ on $X$ in the first
Chern class $c_{1}(K_{X})$ of $K_{X}$ and $dV$ is chosen to be
depend on $\omega$ in a suitable sense (i.e. $dV=e^{h_{\omega}}\omega^{n},$
where $h_{\omega}$ is the Ricci potential of $\omega)$, then the
corresponding solution $u_{\beta}$ of the equation \ref{eq:ma eq in intro}
for $\beta=1$ is the Kähler potential of a \emph{Kähler-Einstein
metric} $\omega_{KE}$ on $X$ with negative Ricci curvature. Similarly,
in the case of $\beta=-1$ the equation \ref{eq:ma eq in intro} corresponds
to the Kähler-Einstein equation for a \emph{positively} curved Kähler-Einstein
equation in $c_{1}(-K_{X})$ on a Fano manifold $X.$ For a general
value on the parameter $\beta$ the equation appears in the continuity
method for the Kähler-Einstein equation. Indeed, for $L=\pm K_{X}$
the equation \ref{eq:ma eq in intro} is equivalent to the following
equation for $\omega_{\beta}$ in $c_{1}(L)$

\begin{equation}
\mbox{Ric \ensuremath{\omega_{\beta}=-\beta\omega_{\beta}+(\beta-\pm1)\theta,}}\label{eq:aubins equation}
\end{equation}
which, for $\beta$ negative, is precisely Aubin's continuity equation
for the Kähler-Einstein problem on a Fano manifold (when $\theta$
is taken as Kähler form in $c_{1}(\pm K_{X})).$ In the present setting,
where $c_{1}(\pm K_{X})$ is replaced by a general Kähler (or big)
cohomology class $[\theta]$ there is no canonical volume form $dV$
attached to $\theta$ and we thus need to work with a general volume
form $dV,$ but this only changes the previous equation with a term
which is independent of $\beta$ and which, as we show, becomes negligible
as $\beta\rightarrow\infty.$ 

Interestingly, as observed in \cite{ru} the equation \ref{eq:aubins equation}
can also be obtained from the \emph{Ricci flow} via a backwards Euler
discretization. Accordingly, the corresponding continuity path is
called the Ricci continuity path in the recent paper \cite{jmr},
where it (or rather its ``conical'' generalization) plays a crucial
role in the construction of Kähler-Einstein metrics with edge/cone
singularities, by deforming the ``trivial'' solution $\omega_{\beta}=\theta$
at $\beta=\infty$ to a Kähler-Einstein metric at $\beta=\pm1.$ It
should however be stressed that the main point of the present paper
is to study the case of a \emph{non-positive} form $\theta$ which
is thus different from the usual settings appearing in the context
of Kähler-Einstein geometry and where, as we show, the limit as $\beta\rightarrow\infty$
is a canonical positive current associated to $\theta.$

\subsubsection*{Cooling down: the zero temperature limit}

In \cite{berm5,berm8} a probabilistic approach to the construction
of Kähler-Einstein metrics, was introduced, using certain $\beta-$deformations
of determinantal point processes on $X$ (which may be described in
terms of ``free fermions'' \cite{berm5}). The point is that if
$\theta$ is the curvature form of a given Hermitian metric $\left\Vert \cdot\right\Vert $
on a, say ample, line bundle $L\rightarrow X,$ then 
\begin{equation}
\mu^{(N_{k},\beta)}:=\frac{\left\Vert (\det S^{(k)})(x_{1},x_{2},...x_{N_{k}})\right\Vert ^{2\beta/k}dV^{\otimes N_{k}}}{Z_{k,\beta}}\label{eq:prob measure}
\end{equation}
defines a random point process on $X,$ i.e. symmetric probability
measure on the space $X^{N_{k}}$ (modulo the permutation group) of
configurations of $N_{k}$ points on $X,$ where $N_{k}$ is dimension
of the vector space $H^{0}(X,L^{\otimes k})$ of global holomorphic
sections of $L^{\otimes k}$ and $\det S^{(k)}$ is any fixed generator
in the top exterior power $\Lambda^{N_{k}}H^{0}(X,L^{\otimes k}),$
identified with a holomorphic section of $(L^{\otimes k})^{\boxtimes N_{k}}\rightarrow X^{N_{k}}.$ 

From a statistical mechanical point of view the parameter $\beta$
appears as the ``thermodynamical $\beta"$, i.e. $\beta=1/T$ is
the \emph{inverse temperature} of the underlying statistical mechanical
system and the complex Monge-Ampère equations above appear as the
mean field type equations describing the macroscopic equilibrium state
of the system at inverse temperature $\beta.$ More precisely $\mu_{\beta}:=MA(u_{\beta}$)
describes the expected macroscopic distribution of a single particle
when $k$ and (hence also the number of particles $N_{k})$ tends
to infinity, 
\[
\int_{X^{N_{k-1}}}\mu^{(N_{k},\beta)}\rightarrow\mu_{\beta}
\]
 A formal proof of this convergence was first outlined in \cite{berm5}
and then a rigorous proof was obtained in \cite{berm8} (in fact,
a much stronger convergence result holds, saying that the convergence
towards $\mu_{\beta}$ holds exponentially in probability in the sense
of large deviations with a rate functional which may be identified
with the twisted K-energy functional). Anyway, here we only want to
provide a statistical motivation for the large $\beta-$limit, which
thus corresponds to the zero-temperature limit, where the system is
slowly cooled down. From this point of view the convergence result
in Theorem \ref{thm:main thm khler case intro} can then be interpreted
as a \emph{second order phase transition} for the corresponding equilibrium
measures $\mu_{\beta}.$ Briefly, the point is that while the support
of $\mu_{\beta}$ is equal to all of $X$ for any\emph{ finite} $\beta$
the limiting measure $\mu_{\infty}(=MA(u_{\theta}))$ is supported
on a proper subset $S$ of $X$ as soon as $\theta$ is not globally
positive. The formation of a limiting ordered structure (here $MA(u_{\theta})$
and its support $S)$ in the zero-temperature limit is typical for
second order phase transitions in the study of disordered systems.
In fact, in many concrete examples the limiting support $S$ is a
domain with piece-wise smooth boundary, but it should be stressed
that there are almost no general regularity results for the boundary
of $S$ (when $n>1).$ In the one-dimensional case of the Riemann
sphere the support set $S$ appears as the ``droplet'' familiar
from the study of Coulomb gases and normal random matrices (see \cite{za,h-m}
and references therein).

\subsubsection*{Added in proof}

Since the first preprint version of the present paper appeared on
ArXiv there has been a number of interesting developments that we
briefly describe. In \cite{d-r} it was shown that $u_{\theta}$ is
Lipschitz continuous as soon as $\theta$ has a Lipschitz potential,
using the regularizations $u_{\beta}$ above and Blocki's gradient
estimate (as a replacement of the Aubin-Yau-Siu inequality used in
Prop \ref{prop:laplace estimate in khler case}). Moreover, very recently
the convergence result for $u_{\beta}$ in the present paper was used
to prove the $C^{1,1}-$regularity of $u_{\theta}$ (in the case of
a Kähler class), by using the recent $C^{1,1}-$estimates in \cite{c-t-w}
as a replacement of the Aubin-Yau-Siu inequality. In another direction
it was shown in \cite{c-n} how to extend the $C^{0}-$convergence
implicit in Theorem \ref{thm:main thm khler case intro} to the setting
of Hessian equations on Kähler manifolds, leading to a new global
regularization result for $(\omega,m)-$subharmonic functions (see
Remark \ref{rem:chinh }). Furthermore, very recently it was shown
in \cite{sj} and \cite{de-ro}, independently, that a transcendtal
Kähler class containing a constant scalar curvature metric is K-semistable,
in general, and K-stable \cite{de-ro} if the automorphism group is
discrete, which thus establishes one direction of the generalized
Yau-Tian-Donaldson conjecture discussed in Section \ref{sub:A-generalization-of}.
Finally, solutions $u_{\beta}$ of global complex Monge-Ampère equations
as above and their relative positivity properties were used in \cite{clp}
to give an alternative proof of Chen's conjecture concerning the convexity
of the K-energy (recently established in\cite{b-bern2}) with $u_{\beta}$
replacing the local Bergman metric approximations used in \cite{b-bern2},
which thus reinforces the intepretation of $u_{\beta}$ as a transcendtal
Bergman metric discussed in Section \ref{sub:Transcendental-Bergman-kernels}.
Finally, a dynamical analog of Theorem \ref{thm:main thm khler case intro},
formulated in terms of the zero-temperature limit of the twisted Kähler-Ricci
flow, is obtained in \cite{b-l}.

\subsubsection*{Acknowledgements}

It is a pleasure to thanks David Witt-Nyström for illuminating discussions
on the works \cite{witt,r-w3}, Julius Ross for inspiring discussions
on the Yau-Tian-Donaldson conjecture for transcendantal Kähler classes,
Jean-Pierre Demailly for the stimulating colaboration \cite{b-d},
which is one of the motivations for the current work and Chinh Lu
and Yanir Rubinstein for helpful comments on the first preprint version
of the present paper. This work was partly supported by grants from
the European Research Council and Knut and Alice Wallenberg foundation.

\subsubsection{Organization. }

After having setup the general framework in Section \ref{sec:From-Monge-Amp=0000E8re-equations}
we go on to first prove the main result (Theorem \ref{thm:main thm khler case intro})
in the case of Kähler class (by two different proofs) and then its
generalization to big classes (Theorem \ref{thm:main thm big case intro}).
The interpretation in terms of transcendental Bergman metrics is discussed
in Section \ref{sub:Applications-to-regularization}, together with
applications to regularization of $\omega-$psh functions. Then in
Section \ref{sub:Degenerations-induced-by} we consider the singular
version of the previous setup which appears in the presence of a divisor
$Z$ on $X.$ Finally, the results in the latter section are applied
in Section \ref{sec:Applications-to-geodesic} to the construction
and regularization of geodesic rays and relations to the transcendtal
generalization of the Yau-Tian-Donaldson conjecture are discussed.

\section{\label{sec:From-Monge-Amp=0000E8re-equations}From Monge-Ampère equations
to $\theta-$psh envelopes}

Let $X$ be a compact complex manifold equipped with a smooth closed
$(1,1)-$ form $\theta$ and denote by $[\theta]$ the corresponding
(Bott-Chern) cohomology class of currents: 
\[
[\theta]:=\left\{ \theta+dd^{c}u:\,\,\,\,u\in L^{1}(X)\right\} \,\,\,\,\,(dd^{c}:=\frac{i}{2\pi}\partial\bar{\partial})
\]
The space of all $\theta-$plurisubharmonic functions, denoted by
$PSH(X,\theta),$ is the convex subspace of $[\theta]$ consisting
of all $u\in L^{1}(X)$ which are upper semi-continuous (usc) and
such that $\theta+dd^{c}u\geq0$ in the sense of currents. We equip,
as usual, the space $PSH(X,\theta)$ with its $L^{1}-$topology. The
class $[\theta]$ is said to be\emph{ pseudo-effective} if $PSH(X,\theta)$
is non-empty. There is then a canonical element $u_{\theta}$ in the
space $PSH(X,\theta)$ defined as the following envelope: 
\begin{equation}
u_{\theta}(x):=\sup\{u(x):\,\,\,u\leq0,\,\,\,u\in PSH(X,\theta)\},\label{eq:def of env in text}
\end{equation}
Given a smooth function $u$ we will write 
\[
MA_{\theta}(u):=(\theta+dd^{c}u)^{n}
\]
 for the corresponding Monge-Ampère operator (often dropping the subindex
$\theta$ from the notation). In the case when the class $[\theta]$
is a \emph{Kähler class,} i.e. $[\theta]$ contains a smooth and strictly
positive form $\omega$ (i.e. a Kähler form) we will, also fixing
volume form $dV$ on $X,$ denote by $u_{\beta}$ the unique solution
to the following complex Monge-Ampère equation: 
\begin{equation}
MA(u_{\beta})=e^{\beta u_{\beta}}dV\label{eq:ma eq in complex setup}
\end{equation}
(the solution is automatically $\theta-$psh). More generally, the
previous equation makes sense as long as the class $[\theta]$ is\emph{
big} (see section \ref{sub:The-case-of big class} below), but in
general the unique $\theta-$psh solution $u_{\beta}$ will have $-\infty-$singularities
(even if the singularities are always minimal \cite{begz}). We recall
the following regularity result:
\begin{thm}
\cite{b-d}\label{thm:.-reg thm b-d}. Let $\theta$ be a smooth $(1,1)-$form
on a compact complex manifold $X$ such that $[\theta]$ defines a
big cohomology class. Then the Laplacian of $u_{\theta}$ is locally
bounded on a Zariski open subset $\Omega$ of $X$ (which can be taken
as the Kähler locus of $[\theta]).$ As a consequence,$MA(u_{\theta})$
has an $L^{\infty}-$density, or more precisely: 
\begin{equation}
(\theta+dd^{c}u_{\theta})^{n}=1_{D}\theta^{n},\label{eq:point wise formula for MA of env in text}
\end{equation}
 where $D:=\{u_{\theta}=0\}.$
\end{thm}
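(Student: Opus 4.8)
I would establish the two assertions in turn: first the local boundedness of the complex Hessian of $u_\theta$ on the K\"ahler locus $\Omega$, which is the substantial point, and then the pointwise identity $MA(u_\theta)=1_D\theta^n$, which is a soft consequence of it. Granting the Hessian bound, $u_\theta\in\mathcal{C}^{1,\alpha}_{loc}(\Omega)$ for every $\alpha\in\,]0,1[$ (by $W^{2,p}$-elliptic theory applied to a local psh potential), and $MA(u_\theta)$ is absolutely continuous, hence determined by its density a.e. On the open set $\{u_\theta<0\}$ the envelope is maximal, so $MA(u_\theta)=0$ there: were the density positive on a set of positive measure, solving a homogeneous complex Monge--Amp\`ere Dirichlet problem on a small ball contained in $\{u_\theta<0\}$ would produce a $\theta$-psh competitor $\le 0$ lying strictly above $u_\theta$ somewhere, contradicting the supremum defining $u_\theta$. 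On the contact set $D=\{u_\theta=0\}$ one uses that $u_\theta\le 0$ with equality exactly on $D$: at almost every $x_0\in D$, $x_0$ is a global maximum of $u_\theta$ and a density point of $D$, and a standard slicing argument along complex lines through $x_0$ (using that $dd^c u_\theta$ is bounded and that $|B_r(x_0)\setminus D|/|B_r(x_0)|\to 0$) forces $dd^c u_\theta(x_0)=0$, hence $(\theta+dd^cu_\theta)^n=\theta^n$ a.e.\ on $D$. Combining the two, $MA(u_\theta)=1_D\theta^n$ on $\Omega$; and since $u_\theta$ has minimal singularities, $MA(u_\theta)$ charges no pluripolar set, in particular nothing of $X\setminus\Omega$, so the identity holds on all of $X$.

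For the Hessian bound, the lower bound is automatic from $\theta+dd^cu_\theta\ge 0$ (locally $\Delta_\omega u_\theta\ge-\mathrm{tr}_\omega\theta\ge-C$), so everything reduces to a local \emph{upper} bound on $\Delta_\omega u_\theta$ on $\Omega$. I would obtain this by a priori estimates on the natural regularization. In the K\"ahler case, fix $\omega_0:=\theta+dd^c\rho_0>0$, let $u_\beta$ solve $MA(u_\beta)=e^{\beta u_\beta}dV$, and first record the uniform zeroth-order bounds $-C\le u_\beta\le C/\beta$ --- the upper bound by the maximum principle (at a maximum $0<\omega_\beta\le\theta$, whence $e^{\beta\max u_\beta}dV\le\theta^n$ there), the lower bound by comparing with $\rho_0$ plus a constant via the comparison principle for the increasing right-hand side $e^{\beta u}dV$. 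The crucial consequence of the obstacle is then the uniform-in-$\beta$ bound $\omega_\beta^n=e^{\beta u_\beta}dV\le C\,\omega_0^n$. One now runs the Aubin--Yau second-order estimate on the auxiliary quantity $\log\mathrm{tr}_{\omega_0}\omega_\beta-A(u_\beta-\rho_0)$, with $A$ bigger than a lower bound for the bisectional curvature of $\omega_0$: at an interior maximum the negative curvature term is absorbed by the $A$-term coming from $\Delta_{\omega_\beta}(u_\beta-\rho_0)=n-\mathrm{tr}_{\omega_\beta}\omega_0$, while differentiating $\log(e^{\beta u_\beta}dV)$ contributes a favourable factor of size $\beta$ together with an a priori unfavourable term of order $\beta/\mathrm{tr}_{\omega_0}\omega_\beta$ (stemming from $\mathrm{tr}_{\omega_0}\theta$, which is only bounded, not positive); the latter is tamed by a dichotomy at the maximum --- either $\mathrm{tr}_{\omega_0}\omega_\beta$ is already $O(1)$, or it exceeds a fixed constant and the unfavourable term is then at most $\tfrac12\beta$, leaving a net favourable contribution of order $\beta$ which forces $\mathrm{tr}_{\omega_0}\omega_\beta$ to be bounded independently of $\beta$. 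This gives a $\beta$-independent bound on $\sup_X\Delta_\omega u_\beta$, and letting $\beta\to\infty$ (the identification $u_\beta\to u_\theta$ being elementary: $\limsup_\beta u_\beta\le u_\theta$ since $u_\beta-C/\beta$ is $\theta$-psh and $\le 0$, and $\liminf_\beta u_\beta\ge u_\theta$ by comparison from below with smooth strictly $\theta$-psh competitors $\le 0$) transfers the bound to $u_\theta$. In the genuinely big, non-K\"ahler case this PDE route runs into serious trouble near the non-K\"ahler locus, and one instead --- following \cite{b-d} --- realizes $u_\theta$ as a controlled limit of quasi-psh functions with analytic singularities obtained by Demailly's extension of Kiselman's attenuation-of-singularities procedure, and reads the local Hessian bound on $\Omega$ off that construction.

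The main obstacle is, in every version, the upper Hessian bound: $u_\theta$ satisfies no classical elliptic equation but an obstacle problem whose contact set $D$ is \emph{free}, so on the PDE side the estimate must be run on the regularizing equations and the $\beta$-factors defeated by exploiting the sign of $e^{\beta u_\beta}$ with $u_\beta\lesssim 0$ (near $D$ the equation is nondegenerate; away from $D$ the right-hand side is exponentially small, but only a one-sided bound is needed), while on the pluripotential side the delicate point is to control the loss of positivity in the attenuation step uniformly, so that the Hessian bound survives on all of $\Omega$. Once that local bound on $\Omega$ is in hand, the obstacle-problem computation of the first paragraph closes the argument.
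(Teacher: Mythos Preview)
The paper does not prove this theorem at all: it is quoted from \cite{b-d}, whose argument is purely pluripotential-theoretic (Demailly's extension of Kiselman's attenuation of singularities applied to a regularization of $u_\theta$ by quasi-psh functions with analytic singularities). Your proposal is therefore not to be compared with a proof in the paper but with the method of \cite{b-d} and with the paper's own remarks on an alternative route.

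In the K\"ahler case your approach is genuinely different from \cite{b-d}: you run the Aubin--Yau Laplacian estimate on the smooth solutions $u_\beta$ (this is exactly Proposition~\ref{prop:laplace estimate in khler case} here; your ``dichotomy'' is a repackaging of the same computation) and then pass to the limit $\beta\to\infty$. The paper is explicitly aware that this would yield a \emph{new} proof of the Hessian bound (see Remark~\ref{rem: apriori} and the paragraph ``On the proofs'' in the introduction), but points out an apparent circularity: its own convergence proof (Theorem~\ref{thm:conv in energy khler case}) consumes the finite-entropy property of $MA(u_\theta)$, which is itself a consequence of the theorem one is trying to prove. Your barrier argument sidesteps this: $u_\beta\le u_\theta+C/\beta$ is immediate from $u_\beta-C/\beta\le 0$ being $\theta$-psh, and $u_\beta\ge v-C_v/\beta$ for every smooth strictly $\theta$-psh $v\le 0$ follows from Lemma~\ref{lem:cons of compar principle in khler case}; together with B\l ocki--Ko\l odziej regularization (available since $[\theta]$ is K\"ahler) and the elementary fact that $\sup_X v_j\to\sup_X u$ along a decreasing approximation, this gives $u_\beta\to u_\theta$ pointwise, hence in $L^1$ by the uniform $L^\infty$ bounds. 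So in the K\"ahler case your route is correct and supplies precisely the independent convergence argument the paper flags as desirable but does not provide.

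For a general big (non-nef) class you correctly acknowledge that the PDE route breaks down near the non-K\"ahler locus and defer to \cite{b-d}; so your proposal does not give an independent proof of the full statement, only of the K\"ahler subcase. The derivation of $MA(u_\theta)=1_D\theta^n$ from the Hessian bound is standard obstacle-problem reasoning; the vanishing on $\{u_\theta<0\}$ is the balayage property of free envelopes (cf.\ \cite{b-b}), and your density-point argument on $D$ is one acceptable way to see that $dd^cu_\theta=0$ a.e.\ there.
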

Note that it follows immediately from the previous theorem that $MA(u_{\theta})$
is supported on the open set $\{u_{\theta}<0\},$ i.e. the following
``orthogonality relation'' holds 
\begin{equation}
\int_{X}u_{\theta}MA(u_{\theta})=0\label{eq:orthog relation}
\end{equation}
(which can be proved directly, only using that $\theta$ has lower
semi-continuous potentials, using well-known properties of free envelopes
which are proved by solving the local Dirichlet problem for complex
Monge-Ampère operator on a ball \cite{b-b}). In the present paper
we will obtain a direct PDE proof of the previous regularity theorem
in the case of a class which is nef and big. For a general big class
our approach will only yield the weaker regularity property 
\begin{equation}
(\theta+dd^{c}u_{\theta})^{n}\leq1_{D}\theta^{n}\label{eq:weaker reg prop}
\end{equation}

\subsubsection{\label{sub:An-alternative-formulation}An alternative formulation
in the Kähler case}

It may be worth pointing out that, in the Kähler case, the following
equivalent formulation of the previous setup may be given, where the
role of smooth form $\theta$ is played by a smooth function $f.$
We start by fixing a Kähler form $\omega$ on $X$ and consider the
corresponding Kähler class $[\omega].$ We can then define a projection
operator $P_{\omega}$ from $\mathcal{C}^{\infty}(X)$ to $PSH(X,\omega)$
by setting 
\begin{equation}
(P_{\omega}f)(x):=\sup\{\varphi(x):\,\,\,\varphi\leq f,\,\,\,\varphi\in PSH(X,\omega)\}\label{eq:def of proj operator in khler case}
\end{equation}
Setting $\theta:=\omega+dd^{c}f$ we see that $u_{\theta}=P_{\omega}f-f.$
Similarly, given a volume form $dV$ on $X$ we denote by $\varphi_{\beta}(:=P_{\beta}(f))$
the unique smooth solution to 
\begin{equation}
(\omega+dd^{c}\varphi_{\beta})^{n}=e^{\beta(\varphi_{\beta}-f)}dV\label{eq:ma equation with k=0000E4hler ref}
\end{equation}
so that $u_{\beta}=\varphi_{\beta}-f.$ One advantage of this new
formulation is that it allows one to consider case where $f$ is allowed
to have $+\infty-$singularities, leading to degeneracies in the rhs
of the previous Monge-Ampère equation. In particular, this will allow
us to consider a framework of complex Monge-Ampère equations degenerating
along a fixed divisor $Z$ in $X.$ Interestingly, this latter framework
can, from the analytic point view, be seen as a variant of the setting
of a big class within a Kähler framework.

We will be interested in the limit when $\beta\rightarrow\infty.$
In order to separate the different kind of analytical difficulties
which appear in the case when $[\theta]$ is Kähler from those which
appear in the general case when $[\theta]$ is big, we will start
with the Kähler case, even though it can be seen as a special case
of the latter.

\subsection{\label{sub:The-case-of}The case of a Kähler class (Proof of Theorem
\ref{thm:main thm khler case intro})}

In this section we will assume that $[\theta]$ is a Kähler class,
i.e. there exists some smooth function $v\in PSH(X,\theta)$ such
that $\omega:=\theta+dd^{c}v>0,$ i.e. $\omega$ is a Kähler form.

\subsubsection{Convergence in energy}

For a given smooth function $u$ we will write 
\begin{equation}
\mathcal{E}(u):=\frac{1}{n+1}\int_{X}\sum_{j=0}^{n}u(\theta+dd^{c}u)^{j}\wedge\theta^{n-j}\label{eq:def of energy functional in khler case}
\end{equation}
More generally, the functional $\mathcal{E}(u)$ extends uniquely
to the space $PSH(X,\theta),$ by demanding that it be increasing
and (strongly) usc \cite{bbgz}. Following \cite{bbgz} we will say
that a sequence $u_{j}$ in $PSH(X,\theta)$ converges to $u$ \emph{in
energy} if $u_{j}\rightarrow u$ in $L^{1}(X)$ and $\mathcal{E}(u_{j})\rightarrow\mathcal{E}(u).$ 

We recall that the functional $\mathcal{E}$ restricted to the convex
space $PSH(X,\theta)\cap L^{\infty}(X)$ (or more generally, to the
finite energy space $\{\mathcal{E}^{1}>-\infty\})$ may be equivalently
defined as a primitive for the Monge-Ampère operator, viewed as a
one-form on the latter space, in the sense that
\begin{equation}
d\mathcal{E}_{|u}=MA(u)\label{eq:e as primitive of ma}
\end{equation}
(i.e. $d\mathcal{E}(u+tv)/dt=\int MA(u)v$ at $t=0).$ 

The next theorem concerns the following general setting: given a finite
measure $\mu_{0}$ on $X$ we denote by $u_{\beta}$ the solution
to the equations \ref{eq:ma eq in complex setup} obtained by replacing
$dV$ with $\mu_{0}$ (the existence of a solution with full Monge-Ampère
mass is equivalent to $\mu_{0}$ not charging pluripolar subsets of
$X).$ Following \cite{b-b-w} the measure $\mu_{0}$ is said to have
the \emph{Bernstein-Markov property wrt $PSH(X,\theta)$} if for any
positive constant $\epsilon$ there exists a constant $C$ such that
for any $u\in PSH(X,\theta)$ 
\begin{equation}
\sup_{X}e^{\beta u}\leq e^{C}e^{\epsilon\beta}\int_{X}e^{\beta u}\mu_{0}\label{eq:BM property}
\end{equation}
 In particular, any volume form $dV$ has the Bernstein-Markov property
wrt $PSH(X,\theta)$ (as follows from the local submean property of
psh functions). 
\begin{thm}
\label{thm:conv in energy khler case}Let $\mu_{0}$ be a finite measure
on $X$ not charging pluripolar subsets. Denote by $u_{\beta}$ the
solution to the complex Monge-Ampère equation determined by the data
$(\theta,\mu_{0},\beta).$ If $\mu_{0}$ has the Bernstein-Markov
property wrt $PSH(X,\theta),$ then $u_{\beta}$ converges to $u_{\theta}$
in energy.\end{thm}
\begin{proof}
Without loss of generality we may assume that the volume $V$ of the
class $[\theta]$ is equal to one (by a trivial scaling). Consider
the following functional: 
\[
\mathcal{G}_{\beta}(u):=\mathcal{E}(u)-\mathcal{L}_{\beta}(u),\,\,\,\mathcal{L}_{\beta}(u):=\frac{1}{\beta}\log\int_{X}e^{\beta u_{\beta}}\mu_{0},
\]
 which is invariant under the additive action of $\R.$ Its critical
point equation is the ``normalized'' equation $MA(u)=e^{\beta u}\mu_{0}/\int_{X}e^{\beta u}\mu_{0},$
whose unique sup-normalized solution is given by $U_{\beta}:=u_{\beta}-\sup_{X}u_{\beta},$
where, as before, $u_{\beta}$ denotes the unique solution of the
corresponding ``non-normalized'' equation. We will use that $U_{\beta}$
is a maximizer of $\mathcal{G}_{\beta},$ as follows from a concavity
argument \cite{bbgz,berm6}. 

\emph{Step 1: Any $L^{1}-$limit point of the family $U_{\beta}$
is a maximizer of the following functional on $PSH(X,\theta):$ }
\[
\mathcal{G}_{\infty}(u):=\mathcal{E}(u)-\sup_{X}u
\]
First observe that after a harmless normalization we may as well assume
that $\mu_{0}$ is a probability measure. Then $\mathcal{L}_{\beta}(u)\leq\sup_{X}u,$
which means that $\mathcal{G}_{\beta}\geq\mathcal{G}_{\infty}.$ Hence,
for any fixed $v\in PSH(X,\theta)$ we have 
\begin{equation}
\mathcal{G}_{\beta}(U_{\beta})\geq\mathcal{G}_{\beta}(v)\geq\mathcal{G}_{\infty}(v).\label{eq:proof of conv in energy}
\end{equation}
By the compactness of $PSH(X,\theta)\Subset L^{1}(X)$ the family
$U_{\beta}$ has a limit point $U_{\infty}\in PSH(X,\theta),$ where
$U_{\infty}:=\lim_{j\rightarrow\infty}U_{\beta_{j}}$ in the $L^{1}-$topology.
Now fix $\epsilon>0.$ By the Bernstein-Markov property of $\mu_{0}$
there exists a constant $C$ such that 
\[
\mathcal{L}_{\beta}(U_{\beta})\geq\sup_{X}U_{\beta}-C/\beta-\epsilon
\]
 and hence 
\[
\mathcal{G}_{\beta}(U_{\beta})\leq\mathcal{G}_{\infty}(U_{\beta})+C/\beta+\epsilon.
\]
Finally, using that the functional $\mathcal{E}$ is usc on $PSH(X,\theta)$
and $\sup_{X}(\cdot)$ is continuous (see \cite[Cor 1.16]{b-b-w}
for a more general continuity result) it follows that 
\[
\limsup_{j\rightarrow\infty}\mathcal{G}_{\beta}(U_{\beta})\leq\mathcal{G}_{\infty}(U_{\infty})+\epsilon
\]
 which combined with the inequality \ref{eq:proof of conv in energy}
concludes the proof of the first step. 

\emph{Step two: $u_{\theta}$ is the unique sup-normalized maximizer
of $\mathcal{G}_{\infty}$ }

First note that $u_{\theta}$ maximizes \emph{$\mathcal{G}_{\infty}$
on $PSH(X,\theta).$ }To see this first observe that $u_{\theta}$
is sup-normalized, i.e. $\sup_{X}u_{\theta}=0.$ Indeed, if $\sup_{X}u_{\theta}\leq-\delta\leq0$
then $u_{\theta}+\delta\leq0$ and hence $u_{\theta}\geq u_{\theta}+\delta$
(from the very definition of $u_{\theta})$ forcing $\delta=0.$ But
if $U$ is also sup normalized, then $u_{\theta}\geq U$ and hence
$\mathcal{E}(u_{\theta})\geq\mathcal{E}(U),$ since $\mathcal{E}$
is increasing on $PSH(X,\theta),$ showing that $u_{\theta}$ is a
maximizer of $\mathcal{G}_{\infty}.$ The proof of Step two is then
concluded by using that if $u$ and $v$ are two elements in $PSH(X,\theta)$
of finite energy such that $\mathcal{E}(u)=\mathcal{E}(v),$ then
$u-v$ is a constant. This follows from the results in \cite{begz}
and can be proved as follows when$v=u_{\theta}.$ Set $\mu:=MA(u_{\theta})$
and observe that 
\[
\mathcal{E}(U)-\int U\mu\geq\mathcal{E}(U)=\mathcal{E}(u_{\theta})-\int u_{\theta}\mu,
\]
 using the orthogonality relation \ref{eq:orthog relation} in the
last equality. By concavity $u_{\theta}$ is a maximizer of the functional
$\mathcal{E}(\cdot)-\int\cdot\mu$ on $PSH(X,\theta)$ and the previous
inequality thus shows that $U$ is also a maximizer of the latter
functional. But then it follows from \cite[Thm 4.1]{bbgz} that $MA(U)=\mu$
and hence, by the uniqueness of normalized finite energy solutions
to such equations \cite[Thm A]{begz} we conclude that $U=u_{\theta},$
as desired. 

Finally, by the Bernstein-Markov property we have that $\lim_{\beta\rightarrow\infty}\mathcal{L}_{\beta}(U_{\beta})=\lim_{\beta\rightarrow\infty}\sup(U_{\beta})=0$
and hence $u_{\beta}$ also converges to $u_{\theta}$ in $L^{1}(X).$
Moreover, by Step one, we have $\mathcal{E}(u_{\beta})\rightarrow\mathcal{E}(u_{\theta}),$
which concludes the proof of the theorem. \end{proof}
\begin{rem}
The present definition of the Bernstein-Markov property is the natural
``transcendental'' generalization of the definition used in \cite[Def 1.9]{b-b-w},
which concerns the case when $[\theta]=c_{1}(L)$ for a big line bundle
$L.$ More generally, as in \cite[Def 1.9]{b-b-w} one can consider
the setting where a compact subset $K$ of $X$ has been fixed and
say that a measure $\mu_{0}$ supported on $K$ has the \emph{Bernstein-Markov
property wrt $PSH(X,\theta)$ for $K$ }if the inequality \ref{eq:BM property}
holds when $X$ has been replaced with $K.$ Repeating the proof in
the previous theorem then shows that if the latter Bernstein-Markov
property holds, then $u_{\beta}$ converges to $u_{\theta,K}$ defined
as in formula \ref{eq:def of env in text} (with $X$ replaced by
$K)$ under the condition that $u_{\theta,K}$ be continuous (i.e.
$(K,\theta)$ is regular in the sense of \cite{b-b-w}). 
\end{rem}
In the case when $[\theta]$ is a Kähler class we will only need the
$L^{1}-$convergence implicit in the previous theorem. But it should
be stressed that when we move on to the case of a big class the convergence
in energy will be crucial in order to establish the convergence in
$L^{\infty}-$norms.

\subsubsection{\label{sub:A-direct-PDE}A direct proof using the maximum principle
when $\mu_{0}$ is a volume form}

Next we show how to give an alternative direct proof of the $L^{\infty}-$convergence
towards $u_{\theta},$ which does not use the a priori regularity
result in Theorem \ref{thm:.-reg thm b-d} (on the other hand it uses
the Aubin-Yau theorem saying that $u_{\beta}$ is smooth). 
\begin{prop}
\label{prop:explicit rate in k=0000E4hler case}Let $[\theta]$ be
a Kähler class and $dV$ a volume form on $X.$ Then the correspondng
smooth solution $u_{\beta}$ of equation \ref{eq:ma eq in complex setup}
satisfies 
\[
\sup_{X}|u_{\beta}-u_{\theta}|\leq\frac{A\log\beta}{\beta},.
\]
 where the constant $A$ only depends on an upper bound on $|\theta^{n}/\omega^{n}|.$\end{prop}
\begin{proof}
Since the solution $u_{\beta}$ is smooth and $dd^{c}u_{\beta}\geq0$
at a point $x_{0}$ where the maximum of $u_{\beta}$ is attained,
equation \ref{eq:ma eq in complex setup} implies the uniform a priori
estimate 
\[
u_{\beta}\leq C/\beta,\,\,\,\,C:=\log\sup_{X}(\frac{\theta^{n}}{\omega^{n}})_{+},\,\,a_{+}:=\max\{0,a\}.
\]
Hence, $u_{\beta}-C/\beta\leq u'_{\theta}$ where $u'_{\theta}$ is
defined as $u_{\theta}$, but with the sup taken over the subspace
of all $\theta-$psh functions $u\leq0$ which are \emph{smooth}.
Conversely, fixing a smooth and strictly $\theta-$psh function $v$
and positive numbers $\epsilon$ and $\delta$ we consider a candidate
$u$ for the sup defining $u'_{\theta}$ and set $u_{\epsilon,\delta}:=(1-\epsilon)u+\epsilon v-\delta.$
Then 
\begin{equation}
(\theta+dd^{c}u_{\epsilon,\delta})^{n}\geq e^{\beta u_{\epsilon,\delta}}dV,\label{eq:subs ineq}
\end{equation}
 as long as $e^{-\delta\beta}\leq C\epsilon^{n},$ for a constant
$C$ only depending on the volume form $dV$ (and the fixed element
$v).$ In particular, the previous inequality holds for $\epsilon=1/\beta$
and $\delta=\frac{C'}{\beta}\log\beta$ for $C'$ sufficently large.
But then, comparing the inequality \ref{eq:subs ineq} and the defining
equation \ref{eq:ma eq in complex setup}, it follows from the maximum
principle that $u_{\epsilon,\delta}\leq u_{\beta}$ (see Lemma\ref{lem:cons of compar principle in khler case}).
All in all this means that 
\[
u_{\beta}-C/\beta\leq u'_{\theta}\leq\frac{1}{(1-1/\beta)}u_{\beta}+\frac{C'}{\beta}\log\beta,
\]
 and hence the proof is concluded by the observation that $u'_{\theta}=u{}_{\theta},$
which is an immediate consequence of Demailly's regularization theorem.
In fact, it is not necessary to invoke the latter regularization result
as the argumentent above leads to a new PDE proof of it, as explained
in Section \ref{sub:Applications-to-regularization}.
\end{proof}

\subsubsection{$L^{\infty}-$ estimates}

We start with the following well-known
\begin{lem}
\label{lem:cons of compar principle in khler case}Assume that $u$
and $v$ are (say, bounded) $\theta-$ psh functions such that $MA(v)\geq e^{\beta v}dV$
and $MA(u)\leq e^{\beta u}dV.$ Then $v\leq u.$\end{lem}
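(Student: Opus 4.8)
The plan is to deduce the inequality $v\le u$ from the comparison principle for the complex Monge-Amp\`ere operator, exactly as in the classical Aubin--Yau uniqueness argument adapted to the extra monotone nonlinearity $e^{\beta t}$ on the right-hand side. First I would argue by contradiction: suppose the open set $U:=\{v>u\}$ is nonempty. On $U$ one has $v>u$, so by monotonicity of $t\mapsto e^{\beta t}$ (here $\beta>0$ is essential) we get $e^{\beta v}dV>e^{\beta u}dV$ pointwise on $U$, and hence, using the two differential inequalities in the hypothesis,
\[
MA(v)\ \ge\ e^{\beta v}dV\ >\ e^{\beta u}dV\ \ge\ MA(u)\qquad\text{on }U,
\]
so in particular $\int_U MA(v)>\int_U MA(u)$ (note both integrals are finite since $u,v$ are bounded).

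Next I would invoke the comparison principle of Bedford--Taylor type for $\theta$-psh functions (valid on the compact K\"ahler manifold $X$, see \cite{bbgz,begz}), which asserts that for bounded $\theta$-psh $u,v$,
\[
\int_{\{v>u\}}(\theta+dd^c u)^n\ \ge\ \int_{\{v>u\}}(\theta+dd^c v)^n.
\]
Combining this with the strict inequality displayed above yields
\[
\int_U MA(v)\ \le\ \int_U MA(u)\ <\ \int_U MA(v),
\]
a contradiction, unless $U$ has measure zero. Since the Monge-Amp\`ere measures here are mutually absolutely continuous with respect to $dV$ on $U$ (they have $L^\infty$, indeed $L^1_{loc}$, densities against $dV$), $U$ having measure zero together with $v,u$ being quasi-continuous forces $U=\emptyset$, i.e. $v\le u$ on $X$.

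The main obstacle is the borderline case where the two densities coincide and $\{v>u\}$ has positive measure but $MA(v)=MA(u)$ there; to rule this out cleanly I would perturb, replacing $v$ by $v_\varepsilon:=v-\varepsilon$ for small $\varepsilon>0$, so that on $\{v_\varepsilon>u\}$ one has $MA(v_\varepsilon)=MA(v)\ge e^{\beta v}dV=e^{\beta\varepsilon}e^{\beta v_\varepsilon}dV>e^{\beta u}dV\ge MA(u)$, now with a genuinely strict gap; the comparison principle then forces $\{v_\varepsilon>u\}$ to be empty for every $\varepsilon>0$, and letting $\varepsilon\to0$ gives $v\le u$. One should also note that the hypotheses as stated say $u,v$ are ``say, bounded,'' so no subtlety with minimal singularities arises here; the argument is purely the comparison principle plus the monotonicity of the exponential, and the same proof works verbatim with $dV$ replaced by any measure not charging pluripolar sets.
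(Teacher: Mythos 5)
Your proof is correct and follows essentially the same route as the paper: apply the Bedford--Taylor comparison principle on the set $\{v>u\}$, combine it with the strict monotonicity of $t\mapsto e^{\beta t}$ to force that set to be $dV$-negligible, and then upgrade to $v\le u$ everywhere by the standard psh regularization argument. (One minor slip: $\{v>u\}$ need not be \emph{open} since $u,v$ are merely upper semi-continuous, but your argument never actually uses openness, and the $\varepsilon$-perturbation you add at the end cleanly handles the borderline measure-zero case that the paper's terser version glosses over.)
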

\begin{proof}
In the smooth case this follows immediately from the maximum principle
and in the general case we can apply the comparison principle (which
will be important in the setting of big class considered below). Indeed,
according to the comparison principle $\int_{\{u\leq v\}}MA(v)\leq\int_{\{u\leq v\}}MA(u)$
and hence $\int_{\{u\leq v\}}e^{\beta v}dV\leq\int_{\{u\leq v\}}e^{\beta u}dV.$
But then it must be that $v\leq u$ a.e. on $X$ and hence everywhere.
\end{proof}
The previous lemma allows us to construct ``barriers'' to show that
$u_{\beta}$ is uniformly bounded:
\begin{lem}
\label{lem:unif estimate in the khler case}There exists a constant
$C$ such that $\sup_{X}|u_{\beta}|\leq C.$\end{lem}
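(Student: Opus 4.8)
The plan is to prove the uniform bound $\sup_X|u_\beta|\le C$ by constructing explicit sub- and super-solutions (``barriers'') and invoking the comparison Lemma \ref{lem:cons of compar principle in khler case}. For the upper bound, I would look for a $\theta$-psh function $v$ satisfying $MA(v)\ge e^{\beta v}dV$; then the lemma forces $v\le u_\beta$, but that is the wrong direction, so instead for the \emph{upper} bound on $u_\beta$ I want a $\theta$-psh $u$ with $MA(u)\le e^{\beta u}dV$ and $u\ge u_\beta$. Take $u\equiv C_+$ a large constant: since $[\theta]$ is K\"ahler we may pick a K\"ahler form $\omega=\theta+dd^c v_0$ with $v_0$ smooth, and then $MA(v_0+C_+)=\omega^n$; choosing $C_+$ large enough that $\omega^n\le e^{\beta(v_0+C_+)}dV$ pointwise (possible since $\omega^n/dV$ is a bounded function and $e^{\beta v_0}$ is bounded below) gives $MA(v_0+C_+)\le e^{\beta(v_0+C_+)}dV$, hence $u_\beta\le v_0+C_+\le C$.

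For the lower bound I would run the symmetric argument: I want a $\theta$-psh $v$ with $MA(v)\ge e^{\beta v}dV$, which then forces $v\le u_\beta$. Taking $v=v_0-C_-$ for $C_-$ large, $MA(v)=\omega^n$, and I need $\omega^n\ge e^{\beta(v_0-C_-)}dV$, i.e. $C_-\ge \frac1\beta\log(e^{\beta v_0}dV/\omega^n)$; since $\omega^n$ is a strictly positive smooth form and the right-hand side is bounded above (for fixed $\beta$), this holds for $C_-$ large, giving $u_\beta\ge v_0-C_-\ge -C$. The delicate point is uniformity in $\beta$: the constant $C_-$ above a priori grows like $\frac1\beta\sup_X\log(e^{\beta v_0}dV/\omega^n)=\sup_X v_0+\frac1\beta\sup_X\log(dV/\omega^n)$, which is in fact bounded as $\beta\to\infty$, so the bound is uniform; similarly for $C_+$ one checks $C_+$ can be taken with a $\beta$-independent bound. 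I would present this cleanly by noting that for $\beta\ge1$ the quantity $\frac1\beta\log(dV/\omega^n)$ stays bounded.

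The main obstacle — really the only subtlety — is making sure the barriers are chosen \emph{uniformly} in $\beta$, since a naive choice gives constants blowing up; the resolution is that the offending term is $\frac1\beta$ times a fixed bounded function, hence harmless for $\beta$ bounded away from $0$. (Small $\beta$ is not an issue here since we care about $\beta\to\infty$, but one can also note the estimate degenerates gracefully as $\beta\to0$ only through the $\sup_X v_0$ term which is fixed.) Everything else is a direct application of Lemma \ref{lem:cons of compar principle in khler case} together with the elementary fact that on a compact manifold the smooth positive forms $\omega^n$ and $dV$ are mutually bounded above and below.
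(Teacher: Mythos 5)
Your argument is correct and follows essentially the same route as the paper: construct barriers of the form (fixed $\theta$-psh function) $\pm$ (constant), verify the sub/super-solution inequalities $MA(v)\gtrless e^{\beta v}dV$, and apply the comparison Lemma \ref{lem:cons of compar principle in khler case}. The paper's version shifts a generic smooth $\theta$-psh function $v$ with $MA(v)\ge e^{-C}dV$ so that $v\le -C/\beta$ (and symmetrically for the upper bound), whereas you work directly with the K\"ahler potential $v_0$ and spell out the $\beta$-dependence of the additive constant; both resolve the only real subtlety -- uniformity in $\beta$ -- by observing that the offending term is $O(1/\beta)$ and hence harmless for $\beta\ge 1$.
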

\begin{proof}
Let us start with the proof of the lower bound on $u_{\beta}.$ Since
$[\theta]$ is a Kähler class there is a smooth $\theta-$psh function
$v$ such that $MA(v)\geq e^{-C}dV$ for some constant $C.$ After
shifting $v$ by a constant we may assume that $v\leq-C/\beta.$ But
then $MA(v)\geq e^{-C}dV\geq e^{\beta v}$ and hence by the previous
lemma $v\leq u_{\beta}$ which concludes the proof of the lower bound.
Similarly, taking $v$ to be a smooth $\theta-$psh function $v$
such that $MA(v)\leq e^{C}dV$ and shifting $v$ so that $C/\beta\leq v$
proves that $u_{\beta}\leq v,$ which concludes the proof of the lemma. 
\end{proof}

\subsubsection{The Laplacian estimate}

Next we will establish the following key Laplacian estimate:
\begin{prop}
\label{prop:laplace estimate in khler case}Fix a Kähler form $\omega$
in $[\theta].$ Then there exists a constant $C$ such that, for $\beta\geq\beta_{0},$
\[
-C\leq\Delta_{\omega}u_{\beta}\leq C
\]
\end{prop}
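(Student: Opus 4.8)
The plan is to adapt the classical Aubin--Yau second-order estimate to the equation $MA(u_\beta) = e^{\beta u_\beta} dV$, keeping careful track of the $\beta$-dependence so that all constants come out uniform. Write $\omega_\beta := \theta + dd^c u_\beta > 0$ and set $\omega$ to be a fixed K\"ahler form in $[\theta]$. The upper bound on $\Delta_\omega u_\beta$ is the substantive one; the lower bound $\Delta_\omega u_\beta \geq -C$ is automatic, since $\theta + dd^c u_\beta \geq 0$ forces $dd^c u_\beta \geq -\theta$ and hence $\Delta_\omega u_\beta \geq -\operatorname{tr}_\omega \theta \geq -C$. So the work is entirely in the upper bound, equivalently a uniform bound on $\operatorname{tr}_\omega \omega_\beta$.

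The main step is to compute the Laplacian (with respect to the metric $\omega_\beta$, which is the natural one here) of $\log \operatorname{tr}_\omega \omega_\beta$. Taking logarithms in the Monge--Amp\`ere equation gives $\log(\omega_\beta^n/\omega^n) = \beta u_\beta + \log(dV/\omega^n)$, so that $\operatorname{Ric}(\omega_\beta) = \operatorname{Ric}(\omega) - \beta\, dd^c u_\beta - dd^c \log(dV/\omega^n) = \operatorname{Ric}(\omega) - \beta(\omega_\beta - \theta) - dd^c\log(dV/\omega^n)$. Feeding this into the standard Aubin--Yau/Chern--Lu inequality for $\Delta_{\omega_\beta}\log\operatorname{tr}_\omega\omega_\beta$, one obtains an estimate of the schematic form
\[
\Delta_{\omega_\beta}\log\operatorname{tr}_\omega\omega_\beta \;\geq\; -C_1 \operatorname{tr}_{\omega_\beta}\omega \;-\; \beta \frac{\operatorname{tr}_\omega(\omega_\beta - \theta)}{\operatorname{tr}_\omega\omega_\beta}\,,
\]
where $C_1$ depends only on a lower bound for the bisectional curvature of $\omega$ and on $\|dd^c\log(dV/\omega^n)\|$, hence is independent of $\beta$. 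The crucial observation is that $\operatorname{tr}_\omega(\omega_\beta - \theta) = \operatorname{tr}_\omega\omega_\beta - \operatorname{tr}_\omega\theta \leq \operatorname{tr}_\omega\omega_\beta$, so the bad term $-\beta\,\operatorname{tr}_\omega(\omega_\beta-\theta)/\operatorname{tr}_\omega\omega_\beta$ is bounded below by $-\beta$ plus a harmless term; crucially it does \emph{not} grow with $\operatorname{tr}_\omega\omega_\beta$. Now apply the maximum principle to the auxiliary function $H := \log\operatorname{tr}_\omega\omega_\beta - A u_\beta$ for a large fixed constant $A$ (depending only on the curvature of $\omega$, not on $\beta$): at a maximum point of $H$ one has $\Delta_{\omega_\beta} H \leq 0$, and since $\Delta_{\omega_\beta}(-Au_\beta) = -A(n - \operatorname{tr}_{\omega_\beta}\theta) \geq A\operatorname{tr}_{\omega_\beta}\omega \cdot (\text{const}) - An$ after choosing $A$ large enough relative to $C_1$, one extracts a bound of the form $\operatorname{tr}_{\omega_\beta}\omega \leq C_2(1 + \beta)$ at that point. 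Then, using the elementary inequality $\operatorname{tr}_\omega\omega_\beta \leq \frac{1}{(n-1)!}(\operatorname{tr}_{\omega_\beta}\omega)^{n-1}\,\frac{\omega_\beta^n}{\omega^n}$ together with $\omega_\beta^n/\omega^n = e^{\beta u_\beta}\,dV/\omega^n$ and the \emph{uniform} $L^\infty$ bound $|u_\beta|\leq C$ from Lemma \ref{lem:unif estimate in the khler case}, one sees that the apparent factor $e^{\beta u_\beta}$ is bounded; however the factor $(\operatorname{tr}_{\omega_\beta}\omega)^{n-1} \leq C(1+\beta)^{n-1}$ is not.

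The remaining obstacle --- and the point where one must be more careful than in the classical argument --- is to remove the residual $\beta$-growth from this bound; a naive run of the maximum principle only gives $\operatorname{tr}_\omega\omega_\beta = O(\beta^{n-1})$, which is useless. The resolution is to reorganize the estimate so that the $\beta u_\beta$ term is kept inside the maximum-principle function and exploited with the \emph{right sign}: one instead applies the maximum principle to $H := \log\operatorname{tr}_\omega\omega_\beta - (A+\beta) u_\beta$ (or equivalently $\log\operatorname{tr}_\omega\omega_\beta - Au_\beta + \log(\omega^n/dV)/$ something), so that the $-\beta\Delta_{\omega_\beta}u_\beta = -\beta(n - \operatorname{tr}_{\omega_\beta}\theta)$ term contributes a good $+\beta\operatorname{tr}_{\omega_\beta}\theta$ piece that exactly absorbs the bad term coming from $\operatorname{Ric}(\omega_\beta)$; here one needs $\theta$ itself to be controlled, but $\theta$ is a fixed smooth form, so $\operatorname{tr}_{\omega_\beta}\theta$ enters with a fixed sign only on the locus where $\theta>0$ --- on a K\"ahler class one can first replace $\theta$ by a cohomologous K\"ahler form $\omega$ (absorbing the difference into $u_\beta$, which stays uniformly bounded) and then $\theta = \omega > 0$ is available. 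With that normalization the bad $\beta$-term and the good $\beta$-term cancel up to a uniformly bounded error, and the maximum principle yields $\operatorname{tr}_\omega\omega_\beta \leq C$ at the maximum of $H$, hence everywhere after using the uniform bound on $u_\beta$ once more. I expect this bookkeeping --- choosing the exact combination of $\log\operatorname{tr}_\omega\omega_\beta$, $u_\beta$ and $\beta u_\beta$ in the barrier function so that every $\beta$-dependent term either cancels or has a sign --- to be the main delicate point; everything else is a routine, if lengthy, Aubin--Yau computation.
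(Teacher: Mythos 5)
Your overall framework is the right one (Aubin--Yau/Siu Laplacian estimate plus a maximum-principle barrier), but there is a sign error in the key $\beta$-term, and everything downstream---the identification of the ``obstacle'' and the proposed $\beta$-dependent barrier---is an artifact of that error.

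\textbf{The sign error.} From $\omega_\beta^n = e^{f}\omega^n$ with $f=\beta u_\beta + \log(dV/\omega^n)$, the Siu/Aubin--Yau inequality reads
\[
\Delta_{\omega_\beta}\log \operatorname{tr}_\omega\omega_\beta \;\geq\; \frac{\Delta_\omega f}{\operatorname{tr}_\omega\omega_\beta}\;-\;B\,\operatorname{tr}_{\omega_\beta}\omega
\;=\;\beta\,\frac{\operatorname{tr}_\omega(\omega_\beta-\theta)}{\operatorname{tr}_\omega\omega_\beta}\;-\;B\,\operatorname{tr}_{\omega_\beta}\omega\;+\;O\!\left(\frac{1}{\operatorname{tr}_\omega\omega_\beta}\right),
\]
i.e.\ the $\beta$-term enters with a \emph{plus} sign, not a minus sign as you wrote. (Your own Ricci computation $\operatorname{Ric}(\omega_\beta)=\operatorname{Ric}(\omega)-\beta(\omega_\beta-\theta)-dd^c\log(dV/\omega^n)$ is correct; plugging it into $\Delta_\omega f = -\operatorname{tr}_\omega\operatorname{Ric}(\omega_\beta)+\operatorname{tr}_\omega\operatorname{Ric}(\omega)$ also gives the $+$ sign.) With the correct sign the $\beta$-term is the \emph{good} term: since $\operatorname{tr}_\omega(\omega_\beta-\theta)=\operatorname{tr}_\omega\omega_\beta-\operatorname{tr}_\omega\theta$, when $\operatorname{tr}_\omega\omega_\beta$ is large this term is $\approx+\beta$, which blows up as $\beta\to\infty$ and is precisely what gives the estimate its ``engine.''

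\textbf{Consequence.} With the correct sign there is no ``residual $\beta$-growth'' to remove. The paper uses the $\beta$-\emph{independent} barrier $H:=\log\operatorname{tr}_\omega\omega_\beta - B(u_\beta - v)$, where $v$ is a smooth potential with $\omega=\theta+dd^cv$; the reason the paper subtracts $u_\beta-v$ rather than $u_\beta$ alone is that $\Delta_{\omega_\beta}(u_\beta-v)=n-\operatorname{tr}_{\omega_\beta}\omega$ with $\omega$ K\"ahler, whereas $\Delta_{\omega_\beta}u_\beta=n-\operatorname{tr}_{\omega_\beta}\theta$ has no useful sign if $\theta$ is not positive. At the maximum of $H$ one then gets
\[
Bn \;\geq\; \beta\left(1-\frac{\operatorname{tr}_\omega\theta}{\operatorname{tr}_\omega\omega_\beta}\right)\;-\;O\!\left(\frac{1}{\operatorname{tr}_\omega\omega_\beta}\right),
\]
which for $\beta>Bn$ directly forces $\operatorname{tr}_\omega\omega_\beta\leq \frac{\beta\operatorname{tr}_\omega\theta}{\beta-Bn}+O(1)$ at the maximum point, and then uniformly on $X$ because $u_\beta-v$ is uniformly bounded in $\beta$. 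There is no detour through $\operatorname{tr}_{\omega_\beta}\omega$ and the elementary inequality, hence no $(\operatorname{tr}_{\omega_\beta}\omega)^{n-1}$ factor to worry about.

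\textbf{Why the proposed fix would not work.} Even setting aside the sign issue, the barrier $H=\log\operatorname{tr}_\omega\omega_\beta-(A+\beta)u_\beta$ is problematic at the very last step: if $x_0$ is the max of $H$ and you have $\operatorname{tr}_\omega\omega_\beta(x_0)\leq C$, then for general $x$,
\[
\log\operatorname{tr}_\omega\omega_\beta(x)\;\leq\;\log C + (A+\beta)\bigl(u_\beta(x)-u_\beta(x_0)\bigr).
\]
Since $u_\beta\to u_\theta$ and $u_\theta<0$ on a set of positive measure (indeed $u_\theta \equiv 0$ only when $\theta\geq 0$), the oscillation $\sup u_\beta-\inf u_\beta$ is bounded below by a positive constant uniformly in $\beta$; hence $(A+\beta)\bigl(u_\beta(x)-u_\beta(x_0)\bigr)$ can be of size $\sim\beta$ and the final bound on $\operatorname{tr}_\omega\omega_\beta$ grows like $e^{c\beta}$. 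A $\beta$-dependent barrier coefficient is exactly what you cannot afford if you want a $\beta$-uniform estimate, and this is why the paper keeps the coefficient fixed (it is $B$, the curvature bound) and lets the $\beta$-term with the correct sign do all the work.

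Also, the proposed normalization ``replace $\theta$ by a cohomologous K\"ahler form and absorb the difference into $u_\beta$'' does not remove the difficulty: if $\omega=\theta+dd^cv$ and $w_\beta:=u_\beta-v$, the equation becomes $(\omega+dd^cw_\beta)^n = e^{\beta w_\beta}\,e^{\beta v}\,dV$, with a $\beta$-dependent density $e^{\beta v}dV$, so the $\beta$-scale has not been eliminated, just relabeled.

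In short: correct the sign, use the $\beta$-independent barrier $\log\operatorname{tr}_\omega\omega_\beta - B(u_\beta-v)$, and observe that $\beta\frac{\operatorname{tr}_\omega(\omega_\beta-\theta)}{\operatorname{tr}_\omega\omega_\beta}\approx\beta$ dominates everything else for $\beta$ large---this is the actual content of the proof.
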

\begin{proof}
The lower bound follows immediately from $\theta+dd^{c}u_{\beta}\geq0.$
To prove the upper bound we first recall the following variant of
the Aubin-Yau Laplacian estimate in this context due to Siu (compare
page 99 in \cite{siu} and Prop 2.1 in \cite{cgh}): given two Kähler
forms $\omega'$ and $\omega$ such that $\omega'^{n}=e^{f}\omega^{n}$
we have that 
\[
\Delta_{\omega'}\log tr_{\omega}\omega'\geq\frac{\Delta_{\omega}f}{tr_{\omega}\omega'}-Btr_{\omega'}\omega,
\]
 where the constant $B$ is proportional to the infimum of the holomorphic
bisectional curvatures of $\omega.$ Fixing $\beta>0$ and setting
$\omega':=\theta+dd^{c}u$ for $u:=u_{\beta}$ we have, by the MA-equation
for $u_{\beta},$ that $f=\beta u$ and hence 
\[
Btr_{\omega'}\omega+\Delta_{\omega'}\log tr_{\omega}\omega'\geq\beta\frac{\Delta_{\omega}u}{tr_{\omega}\omega'}
\]
Next, we note that $\Delta_{\omega}u=tr_{\omega}\omega'-tr_{\omega}\theta.$
Moreover, writing $\omega=\omega'-dd^{c}(u-v),$ where $v$ is a smooth
function such that 
\begin{equation}
\omega=\theta+dd^{c}v,\label{eq:def of v as kahler pot in proof}
\end{equation}
 also gives $tr_{\omega'}\omega=n-\Delta_{\omega'}(u-v).$ Accordingly,
the previous inequality may be reformulated as follows:

\[
nB+\Delta_{\omega'}(\log tr_{\omega}\omega'-B(u-v))\geq\beta\frac{tr_{\omega}\omega'-tr_{\omega}\theta}{tr_{\omega}\omega'},
\]
 and hence (letting $C$ be the sup of $tr_{\omega}\theta)$ 
\begin{equation}
(C\beta+nBtr_{\omega}\omega')e^{-B(u-v)}+\Delta_{\omega'}\log(tr_{\omega}\omega'-B(u-v))tr_{\omega}\omega'e^{-B(u-v)}\geq\beta tr_{\omega}\omega'e^{-B(u-v)}\label{eq:key laplacian bound}
\end{equation}
Thus, setting $s:=\sup_{X}e^{-B(u-v)}tr_{\omega}\omega'$ and taking
the maximum over $X$ in the previous inequality gives 
\[
\beta s\leq0+nBs+\beta\sup_{X}Ce^{-B(u-v)}
\]
Finally, by the previous lemma $u(:=u_{\beta})$ is uniformly bounded
in $x$ and $\beta$ and since, by definition $v$ is bounded, it
follows that $tr_{\omega}\omega'$ is uniformly bounded from above,
as desired. More precisely, the previous argument gives the estimate
\begin{equation}
tr_{\omega}\omega'\leq\frac{1}{1-nB/\beta}e^{B(u-v)}\left(nB/\beta+\sup_{X}\left(tr_{\omega}\theta\right)e^{-\inf_{X}B(u-v)}\right)\label{eq:precise laplcestimate}
\end{equation}
\end{proof}
\begin{rem}
Note that, in general, the Ricci curvature of the Kähler forms $\omega_{\beta}:=\theta+dd^{c}u_{\beta}$
is unbounded, both from above and below, as $\beta\rightarrow\infty.$
Still, by the previous estimate, the Kähler forms $\omega_{\beta}$
are uniformly bounded from above. However it should be stressed that,
unless $\theta>0,$ there is no uniform bound of the form $\omega_{\beta}\geq\delta\omega>0$
as it will follow from Theorem \ref{thm:main thm khler case intro}
that $\omega_{\beta}^{n}\rightarrow0$ on large portions of $X$ (indeed,
for $\beta$ large, $\omega_{\beta}^{n}\leq Ce^{-\beta\epsilon}dV$
on the open set where $u_{\theta}<-2\epsilon).$
\end{rem}

\subsubsection{End of proof of Theorem \ref{thm:main thm khler case intro} using
the variational approach}

By Lemma \ref{lem:unif estimate in the khler case} $u_{\beta}$ is
uniformly bounded and by the Laplacian estimate in Prop \ref{prop:laplace estimate in khler case}
combined with Green's formula the gradients of $u_{\beta}$ are uniformly
bounded. Hence, it follows from basic compactness results that, after
perhaps passing to a subsequence, $u_{\beta}$ converges to a function
$u$ in $\mathcal{C}^{1,\alpha}(X)$ for any fixed $\alpha\in]0,1[.$
It will thus be enough to show that $u=u_{\theta}$ (since this will
show that any limit point of $\{u_{\beta}\}$ is uniquely determined
and coincides with $u_{\theta}$). But this follows from either Theorem
\ref{thm:conv in energy khler case} or Proposition \ref{prop:explicit rate in k=0000E4hler case}.

\subsection{\label{sub:The-case-of big class}The case of a big class (proof
of Theorem \ref{thm:main thm big case intro})}

A (Bott-Chern) cohomology class $[\theta]$ in $H^{1,1}(X)$ is said
to be\emph{ big, }if $[\theta]$ contains a Kähler current $\omega,$
i.e. a positive current $\omega$ such that that $\omega\geq\epsilon\omega_{0}$
for some positive number $\epsilon,$ where $\omega_{0}$ is a fixed
strictly positive form $\omega_{0}$ on $X.$ We also recall that
a class $[\theta]$ is said to be \emph{nef }if, for any $\epsilon>0,$
there exists a smooth form $\omega_{\epsilon}\in T$ such that $\omega_{\epsilon}\geq-\epsilon\omega_{0}.$
To simplify the exposition we will assume that $X$ is a Kähler manifold
so that the form $\omega_{0}$ may be chosen to closed. Then the cone
of all big classes in the cohomology group $H^{1,1}(X)$ may be defined
as the interior of the cone of pseudo-effective classes and the cone
of Kähler classes may be defined as the interior of the cone of nef
classes.

We also recall that a function $u$ in $PSH(X,\theta)$ is said to
have\emph{ minimal singularities,} if for any $v\in PSH(X,\theta)$
the function $u-v$ is bounded from below on $X.$ In particular,
the envelope $u_{\theta}$ has (by its very definition) minimal singularities
(and this is in fact the standard construction of a function with
minimal singularities). In the case when $[\theta]$ is big any function
with minimal singularities is locally bounded on a Zariski open subset
$\Omega,$ as a well-known consequence of Demailly's approximation
results \cite{dem0}. In fact, the subset $\Omega$ can be taken as
the \emph{Kähler (ample) locus} of $[\theta]$ defined in \cite{bo}.
\begin{example}
\label{ex: nef and big}Let $Y$ be a singular algebraic variety in
complex projective space $\P^{N}$ and $\omega$ a Kähler form on
$\P^{n}$ (for example, $\omega$ could be taken as the Fubini-Study
metric so that $[\omega_{|Y}]$ is the first Chern class of $\mathcal{O}_{X}(1)$).
If now $X\rightarrow Y$ is a smooth resolution of $Y,$ which can
be taken to invertible over the regular locus of $Y;$ then the pull-back
of $\omega$ to $X$ defines a class which is nef and big and such
that its Kähler locus corresponds to the regular part of $Y.$ 
\end{example}
We will denote by $MA$ the Monge-Ampère operator on $PSH(X,\theta)$
defined by replacing wedge products of smooth forms with the \emph{ non-pluripolar
produc}t of positive currents introduced in \cite{begz}. The corresponding
operator $MA$ is usually referred to as the \emph{non-pluripolar
Monge-Ampère operator. }For example, if $u$ has minimal singularities,
then $MA(u)=1_{\Omega}MA(u_{|\Omega})$ on the Kähler locus $\Omega,$
where\emph{ $MA(u_{|\Omega})$ }may be computed locally using the
classical definition of Bedford-Taylor. We let $V$ stand for the
volume of the class $[\theta],$ which may be defined as the total
mass of $MA(u)$ for any function $u$ in $PSH(X,\theta)$ with minimal
singularities. By \cite{begz} there exists a unique solution $u_{\beta}$
to the equations \ref{eq:ma eq in complex setup} in $PSH(X,\theta)$
with minimal singularities. Moreover, by \cite{begz} the solution
is smooth on the Kähler locus in the case when $[\theta]$ is nef
and big (which is expected to be true also without the nef assumption;
compare the discussion in \cite{begz}).

\subsubsection{Convergence in energy}

In the case of a big class one first defines, following \cite{bbgz},
the following functional on the space of all functions in n $PSH(X,\theta)$
with minimal singularities: 

\begin{equation}
\mathcal{E}(u):=\frac{1}{n+1}\int_{X}\sum_{j=0}^{n}(u-u_{\theta})(\theta+dd^{c}u)^{j}\wedge(\theta+dd^{c}u_{\theta})^{n-j}\label{eq:energy functional in big case}
\end{equation}
(the point is that we needs to subtract $u_{\theta}$ to make sure
that the integral is finite). Equivalently, $\mathcal{E}$ may be
defined as the primitive of the Monge-Ampère operator on the the space
of all finite energy functions in $PSH(X,\theta),$ normalized so
that $\mathcal{E}(u_{\theta})=0.$ We then define convergence in energy
as before.
\begin{rem}
Strictly speaking, in the case of a Kähler class the definition \ref{eq:energy functional in big case}
of $\mathcal{E}$ only coincides with the previous one (formula \ref{eq:def of energy functional in khler case})
in the case when $\theta$ is semi-positive (since the definition
in formula \ref{eq:def of energy functional in khler case} corresponds
to the normalization condition $\mathcal{E}(0)=0).$ But the point
is that, in the Kähler case, different normalizations gives rise to
functionals which only differ up to an overall additive constant and
hence the choice of normalization does not effect the notion of convergence
in energy.
\end{rem}
The proof of Theorem \ref{thm:main thm khler case intro} can now
be repeated word for word to give the following 
\begin{prop}
\label{prop:conv in energy big case}Suppose that $\theta$ is a smooth
form such that the class $[\theta]$ is big. Then $u_{\beta}$ converges
to $u_{\theta}$ in energy.
\end{prop}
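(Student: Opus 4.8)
The plan is to transport the variational argument of Theorem~\ref{thm:conv in energy khler case} essentially verbatim to the big setting, verifying that each ingredient survives. First I would set $\mathcal{L}_{\beta}(u):=\frac{1}{\beta}\log\int_{X}e^{\beta u}\,dV$ and $\mathcal{G}_{\beta}(u):=\mathcal{E}(u)-\mathcal{L}_{\beta}(u)$ on the space of finite energy functions in $PSH(X,\theta)$, where now $\mathcal{E}$ is the functional \ref{eq:energy functional in big case} normalized by $\mathcal{E}(u_{\theta})=0$. By \cite{begz} the equation \ref{eq:ma eq in complex setup} has a unique solution $u_{\beta}$ with minimal singularities, hence of finite energy and thus an admissible competitor, and by the concavity argument of \cite{berm6} its sup-normalization $U_{\beta}:=u_{\beta}-\sup_{X}u_{\beta}$ maximizes $\mathcal{G}_{\beta}$. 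This is where the big-class hypothesis actually enters: one uses the existence/uniqueness and minimal-singularity statements of \cite{begz}, together with the fact that $dV$ charges no pluripolar set, which is what makes the variational formalism of \cite{bbgz} applicable.

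Next I would feed $\mu:=MA(u_{\theta})$ into Jensen's inequality to bound $-\mathcal{L}_{\beta}(U_{\beta})$ from above by $-\int_{X}U_{\beta}\,MA(u_{\theta})+\frac{1}{\beta}D$, where $D:=D_{dV}(MA(u_{\theta}))$. The one genuinely non-formal point is that $D<\infty$, and this is exactly what Theorem~\ref{thm:.-reg thm b-d} supplies: $MA(u_{\theta})=1_{D}\theta^{n}$ with $D=\{u_{\theta}=0\}$ has bounded density with respect to $dV$. Combining this with the maximizing inequality $\mathcal{G}_{\beta}(U_{\beta})\geq\mathcal{G}_{\beta}(u_{\theta})$, with $\mathcal{E}(u_{\theta})=0$, and with $\mathcal{L}_{\beta}(u_{\theta})\leq C/\beta$ (from $\sup_{X}u_{\theta}=0$, again a consequence of Theorem~\ref{thm:.-reg thm b-d}), then letting $\beta\to\infty$ and invoking the orthogonality relation \ref{eq:orthog relation}, I expect to arrive at
\[
\liminf_{\beta\to\infty}\Bigl(\mathcal{E}(U_{\beta})-\int_{X}U_{\beta}\,MA(u_{\theta})\Bigr)\ \geq\ 0\ =\ \mathcal{E}(u_{\theta})-\int_{X}u_{\theta}\,MA(u_{\theta}).
\]

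At this point I would apply the variational principle of \cite{bbgz} in the big case: the functional $u\mapsto\mathcal{E}(u)-\int_{X}u\,MA(u_{\theta})$ is maximized, over finite energy $\theta$-psh functions, uniquely modulo constants by the solution of $MA(u)=MA(u_{\theta})$, namely $u_{\theta}$, and any sequence along which it converges to its supremum converges in energy to $u_{\theta}$. Hence $U_{\beta}\to u_{\theta}$ in energy and the displayed inequality is a genuine limit. Finally, repeating Step~2 of Theorem~\ref{thm:conv in energy khler case}: feeding $\mathcal{E}(U_{\beta})\to0$ and $\int_{X}U_{\beta}\,MA(u_{\theta})\to0$ back into the chain of inequalities squeezes $\mathcal{L}_{\beta}(U_{\beta})\to0$; since $u_{\beta}$ and $U_{\beta}$ differ by the constant $\sup_{X}u_{\beta}=-\mathcal{L}_{\beta}(U_{\beta})$ and $\mathcal{E}(\cdot+c)=\mathcal{E}(\cdot)+cV$, this upgrades the energy convergence of $U_{\beta}$ to that of $u_{\beta}$.

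I do not expect a single hard estimate here; the work is conceptual bookkeeping. The main points to be careful about are that the energy functional \ref{eq:energy functional in big case}, the maximizing property of $U_{\beta}$, and the \cite{bbgz} variational principle (including the implication maximizing sequence $\Rightarrow$ convergence in energy) are all available for an arbitrary big class with the normalization $\mathcal{E}(u_{\theta})=0$, and that $\int_{X}U_{\beta}\,MA(u_{\theta})\to\int_{X}u_{\theta}\,MA(u_{\theta})$, which uses that $MA(u_{\theta})$ has bounded density so that energy convergence passes to this linear term. Once Theorem~\ref{thm:.-reg thm b-d} provides the finite entropy, everything else is formal, which is precisely why the K\"ahler proof adapts ``word for word''.
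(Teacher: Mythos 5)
Your proposal is correct and is exactly what the paper does: the paper's entire proof is the one-line remark that the K\"ahler argument "can now be adapted word for word," citing Theorem~\ref{thm:.-reg thm b-d} for the finite-entropy hypothesis, and you have simply spelled out that adaptation with the right normalization $\mathcal{E}(u_{\theta})=0$ and the correct identification $\sup_X u_\beta = -\mathcal{L}_\beta(U_\beta)$.
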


\subsubsection{$L^{\infty}-$estimates.}

We will also need the following upper bound on $u_{\beta}:$
\begin{lem}
\label{lem:refined upper bound}There exists a constant $C$ such
that
\[
u_{\beta}\leq u_{\theta}+C/\beta
\]
(the constant $C$ may be taken as $\log(\theta^{n}/dV)_{+},$ where
$a_{+}:=\max\{0,a\}).$ \end{lem}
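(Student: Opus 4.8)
The plan is to build a barrier from above by comparing $u_\beta$ with $u_\theta + C/\beta$. The natural approach uses the comparison principle (Lemma \ref{lem:cons of compar principle in khler case}), so I first want to identify the relevant Monge-Amp\`ere inequality satisfied by $w := u_\theta + C/\beta$. By Theorem \ref{thm:.-reg thm b-d} we know $MA(u_\theta) = 1_D\,\theta^n$ with $D = \{u_\theta = 0\}$; in particular $MA(w) = MA(u_\theta) = 1_D\,\theta^n$ since adding a constant does not change the Monge-Amp\`ere measure. On the other hand, the equation for $u_\beta$ reads $MA(u_\beta) = e^{\beta u_\beta}\,dV$. To invoke the comparison principle in the form ``$MA(w) \le e^{\beta w}\,dV$ and $MA(u_\beta)\ge e^{\beta u_\beta}\,dV$ imply $w \le u_\beta$'' — wait, that gives the wrong direction; I instead want the dual statement. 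Concretely, I will show $MA(w) \ge e^{\beta w}\,dV$, which by Lemma \ref{lem:cons of compar principle in khler case} (applied with $v=w$, $u=u_\beta$) forces $w \le u_\beta$, i.e. the \emph{lower} bound. So the barrier has to be set up the other way: I want a function $w$ with $MA(w) \le e^{\beta w}\,dV$ and then conclude $u_\beta \le w$.

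So the correct choice is to take $w = u_\theta + C/\beta$ and verify $MA(w) \le e^{\beta w}\,dV$. Since $MA(w) = 1_D\,\theta^n$, supported on $D = \{u_\theta = 0\}$, on the complement of $D$ the left-hand side vanishes and the inequality is automatic (the right-hand side is a positive measure). On $D$ we have $u_\theta = 0$, hence $w = C/\beta$ there, so $e^{\beta w}\,dV = e^{C}\,dV$ on $D$, and the inequality $MA(w) \le e^{\beta w}\,dV$ becomes $1_D\,\theta^n \le 1_D\, e^{C}\,dV$, i.e. $\theta^n \le e^{C}\,dV$ pointwise on $D$. This holds as soon as $C \ge \sup_D \log(\theta^n / dV)$ — and more precisely, normalizing the volume $V$ of $[\theta]$ to one as in the proof of Theorem \ref{thm:conv in energy khler case}, one takes $C = \sup_D \log(\theta^n / (V\,dV))$, which is finite because $\theta^n/dV$ is a smooth (bounded) function on the compact $X$ and $D$ is closed. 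Then Lemma \ref{lem:cons of compar principle in khler case} applied with $v = u_\beta$ and $u = w$ yields $u_\beta \le w = u_\theta + C/\beta$.

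One technical point to be careful about: Lemma \ref{lem:cons of compar principle in khler case} as stated is for bounded $\theta$-psh functions, whereas in the big case $u_\theta$ and $u_\beta$ only have minimal singularities and are bounded merely on the K\"ahler locus $\Omega$. The fix is to note that the comparison principle of \cite{begz} applies to functions with minimal singularities directly, so the same argument goes through: one has $\int_{\{u_\beta \le w\}} MA(w) \le \int_{\{u_\beta \le w\}} MA(u_\beta)$, hence $\int_{\{u_\beta \le w\}} 1_D\,\theta^n \le \int_{\{u_\beta \le w\}} e^{\beta u_\beta}\,dV$, and combined with the pointwise inequality $1_D\,\theta^n \le e^{\beta w}\,dV$ this forces $\{u_\beta \le w\}$ to have full measure unless $u_\beta \le w$ everywhere; chasing this through gives $u_\beta \le w$ a.e.\ and hence everywhere by upper semicontinuity. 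I expect the main (very mild) obstacle to be purely bookkeeping: making sure the Monge-Amp\`ere measure identity $MA(u_\theta + C/\beta) = 1_D\,\theta^n$ and the comparison principle are both invoked in their non-pluripolar versions valid for minimal-singularity potentials, so that the proof covers the big case and not just the K\"ahler case. Everything else is a one-line barrier computation.
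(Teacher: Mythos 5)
Your proposal is correct and is essentially the paper's own argument: it builds the barrier $w = u_\theta + C/\beta$, uses the identity $MA(u_\theta)=1_D\theta^n$ from Theorem \ref{thm:.-reg thm b-d} to verify the supersolution inequality $MA(w)\leq e^{\beta w}\,dV$ for $C=\sup_D\log(\theta^n/V\,dV)$, and concludes by the comparison principle (packaged as Lemma \ref{lem:cons of compar principle in khler case}, which indeed extends to minimal-singularity potentials in the big case). One bookkeeping slip in your final paragraph: the comparison principle should be applied on $\{w\leq u_\beta\}$, not $\{u_\beta\leq w\}$, giving
\[
\int_{\{w\leq u_\beta\}} e^{\beta u_\beta}\,dV \leq \int_{\{w\leq u_\beta\}} MA(w) \leq \int_{\{w\leq u_\beta\}} e^{\beta w}\,dV,
\]
which forces $u_\beta\leq w$ a.e.\ (and then everywhere since both are $\theta$-psh); your main argument via Lemma \ref{lem:cons of compar principle in khler case} already works on the correct set internally, so this is only a slip in the verification paragraph and not a real gap.
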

\begin{proof}
We recall that if $u_{\beta}$ is smooth (as in the case of a Kähler
class) then the inequality follows directly from the maximum principle.
In the general case the inequality follows from the fact that $u_{\beta}$
is a viscosity subsolution of the equation \ref{eq:ma eq in complex setup},
as follows from the results in \cite{e-g-z}. Indeed, first assume
that the maximum of $u_{\beta}$ on $X$ is achieved at a point $x_{0}$
in the Zariski open subset $\Omega$ (defined as the Kähler locus
of the class $[\theta]$). Then we can introduce local holomorphic
coordinates centered at $x_{0}$ and locally write $\theta=dd^{c}f$
for $f$ smooth and set $\phi:=u_{\beta}+f,$ which defines a locally
bounded psh function $\phi.$ The defining equation for $u_{\beta}$
implies the following local inequality, say on a neighourhood of the
the ball $B\subset\C^{n}:$ 
\[
(dd^{c}\phi)^{n}\geq e^{\beta(\phi-f)}dV
\]
in the pluripotential sense of Bedford-Taylor (in fact, equality holds,
but we will only need the inequality above). Moreover, by assumption
$\phi-f$ has a local maximum at $0.$ But then it follows from local
considerations (based on the Bedford-Taylor comparison principle for
bounded psh functions) that 
\[
e^{\beta(\phi-f)}dV\leq(dd^{c}f)^{n}\,\,\,\mbox{at\,}z=0,
\]
(see \cite[Prop 1.11]{e-g-z}). In other words, 
\[
u_{\beta}\leq C_{0}/\beta,\,\,\,C_{0}=\log(\theta^{n}/dV)_{+}.
\]
 which proves the lemma in this case. In the general case we fix a
sup-normalized function $v\in PSH(X,\theta)$ wich is smooth on $\Omega$
and such that $v-u_{\theta}\rightarrow-\infty$ along the analytic
subvariety $X-\Omega.$ Given $\epsilon>0$ we set $u_{\beta,\epsilon}:=(1-\epsilon)u_{\beta}+\epsilon v\in PSH(X,\theta)$
which is locally bounded on $\Omega$ and satisfies the following
inequality in the sense of Bedford-Taylor on $\Omega$ 
\[
MA_{\theta}(u_{\beta,\epsilon})\geq(1-\epsilon)^{n}e^{\beta u_{\beta}}dV\geq(1-\epsilon)^{n}e^{\beta_{\epsilon}u_{\beta,\epsilon}}dV,\,\,\,\beta_{\epsilon}:=\beta(1-\epsilon)^{-1}
\]
 using that $v\leq0$ in the last inequality. By assumption there
exists a point $x_{\epsilon}$ in $\Omega$ where $u_{\beta,\epsilon}$
achieves its maximum. Hence, we can apply the previous argument to
$\phi:=u_{\beta,\epsilon}+f$ with parameter $\beta_{\epsilon}$ to
get an inequality of the form $u_{\beta,\epsilon}\leq C_{\epsilon}/\beta_{\epsilon},$
where $C_{\epsilon}\rightarrow C_{0}$ as $\epsilon\rightarrow0.$
Letting $\epsilon$ tend to zero thus concludes the proof of the lemma.
\end{proof}
We recall that in the case of a Kähler class the estimate in the previous
lemma was obtained as consequence of the maximum principle in the
proof of Proposition \ref{prop:explicit rate in k=0000E4hler case}.
Next, we generalize the $L^{\infty}-$convergence in Proposition \ref{prop:explicit rate in k=0000E4hler case}
to a general big class, using the convergence in energy in Prop \ref{prop:conv in energy big case}.
\begin{prop}
\label{prop:uniform convergence in big case}Suppose that $\theta$
is a smooth form such that the class $[\theta]$ is big. Then $u_{\beta}$
converges uniformly to $u_{\theta}$ on $X,$ i.e. 
\[
\lim_{\beta\rightarrow0}\left\Vert u_{\beta}-u_{\theta}\right\Vert _{L^{\infty}(X)}=0
\]
\end{prop}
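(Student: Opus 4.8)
The plan is to derive the uniform convergence from the refined one-sided bound of Lemma~\ref{lem:refined upper bound} together with a Kolodziej-type $L^{\infty}$-stability estimate for complex Monge-Amp\`ere equations in a big class, as provided by \cite{g-z2}; the two inputs that this estimate will consume are the convergence in energy of Proposition~\ref{prop:conv in energy big case} and a uniform $L^{p}$-bound on the corresponding Monge-Amp\`ere masses.

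First, Lemma~\ref{lem:refined upper bound} already gives one half of the statement, namely $u_{\beta}-u_{\theta}\le C/\beta\to 0$, so only the reverse bound $\sup_{X}(u_{\theta}-u_{\beta})\to 0$ remains to be shown. The same lemma, together with $u_{\theta}\le 0$, also yields $u_{\beta}\le C/\beta$, and hence $MA(u_{\beta})=e^{\beta u_{\beta}}\,dV\le e^{C}\,dV$: the density of $MA(u_{\beta})$ with respect to $dV$ is uniformly bounded, a fortiori uniformly bounded in $L^{p}(dV)$ for every $p>1$. By Theorem~\ref{thm:.-reg thm b-d}, $MA(u_{\theta})=1_{D}\theta^{n}$ also has $L^{\infty}(dV)$-density. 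Moreover a uniform lower bound $u_{\beta}\ge u_{\theta}-C$ can be extracted by a barrier/comparison argument in the spirit of Lemma~\ref{lem:unif estimate in the khler case} (using the bigness of $[\theta]$ to produce a $\theta$-psh subsolution with minimal singularities), so that the family $\{u_{\beta}\}$ is equibounded modulo $u_{\theta}$.

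Next, Proposition~\ref{prop:conv in energy big case} gives $u_{\beta}\to u_{\theta}$ in energy, and convergence in energy is well known to imply convergence in the Monge-Amp\`ere capacity of \cite{begz}: $\mathrm{Cap}\big(\{|u_{\beta}-u_{\theta}|>\delta\}\big)\to 0$ for every $\delta>0$. Feeding into the stability estimate of \cite{g-z2} the uniform $L^{p}$-control on $MA(u_{\beta})$ and on $MA(u_{\theta})$ on the one hand, and the vanishing of the capacities of the sublevel sets $\{u_{\beta}<u_{\theta}-\delta\}$ on the other, one arrives at an inequality of the shape
\[
\sup_{X}(u_{\theta}-u_{\beta})\ \le\ \frac{C}{\beta}+\kappa\Big(\mathrm{Cap}\big(\{u_{\beta}<u_{\theta}-C/\beta\}\big)\Big),
\]
with $\kappa(0^{+})=0$, whose right-hand side tends to $0$. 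Combined with Lemma~\ref{lem:refined upper bound} this yields $\|u_{\beta}-u_{\theta}\|_{L^{\infty}(X)}\to 0$.

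The point that requires genuine care is that, $[\theta]$ being only big, all the tools used here — the comparison and domination principles, the Monge-Amp\`ere capacity, and Kolodziej's $L^{\infty}$-estimate — must be invoked in the $-\infty$-singular framework of \cite{begz,g-z2}, and one has to check that the uniform density bounds obtained above are exactly what that framework requires. In this respect the refined Lemma~\ref{lem:refined upper bound} is the real engine of the proof, since it is what upgrades the mere $L^{1}$-convergence of $u_{\beta}$ into a statement carrying the $L^{p}$-control on Monge-Amp\`ere masses that the stability estimate consumes; and it is the convergence \emph{in energy}, rather than just in $L^{1}$, that makes the capacity convergence — and with it the whole scheme — available, which is precisely why Proposition~\ref{prop:conv in energy big case} was needed in this strengthened form.
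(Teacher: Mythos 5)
Your proof is correct and takes essentially the same approach as the paper: Lemma~\ref{lem:refined upper bound} supplies both the one-sided bound $u_{\beta}\le u_{\theta}+C/\beta$ and the uniform density bound $MA(u_{\beta})\le e^{C}dV$, the convergence in energy of Proposition~\ref{prop:conv in energy big case} is upgraded to convergence in capacity (via \cite{bbgz}), and the $L^{\infty}$-stability theory of \cite{g-z2} then yields the reverse bound. The paper states the stability step in a slightly more quantitative form, invoking Prop.~4.2 of \cite{g-z2} to bound $\sup_{X}(u_{\theta}-u_{\beta}-\epsilon_{\beta})^{+}$ by a power of $\int\left|u_{\theta}-u_{\beta}-\epsilon_{\beta}\right|\,MA(u_{\beta})$ and then using the energy convergence to make that integral vanish, but your capacity-modulus $\kappa$ is the same stability result in a different guise.
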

\begin{proof}
According to the previous lemma we have that $u_{\beta}\leq u_{\theta}+C/\beta$
and hence $MA(u_{\beta})/dV\leq e^{C}.$ Moreover, by Prop \ref{prop:conv in energy big case}
$u_{\beta}$ converges to $u_{\theta}$ in energy. As will be next
explained these properties are enough to conclude that $u_{\beta}$
converges uniformly to $u.$ Indeed, it is well-known that if $u_{j}$
is a sequence in $PSH(X,\theta)$ converging in capacity to $u_{\infty}$
with a uniform bound $L^{p}-$bound on $MA(u_{j})/dV,$ then $\left\Vert u_{j}-u_{\infty}\right\Vert _{L^{\infty}(X)}\rightarrow\infty,$
as follows from a generalization of Kolodziej's $L^{\infty}-$estimates
to the setting of a big class (see \cite{begz,g-z2} and references
therein). Finally, as shown in \cite{bbgz}, convergence in energy
implies convergence in capacity, which thus concludes the proof of
the previous proposition. In fact, using the stability results in
\cite{g-z2} a more quantitative convergence result can be given.
Indeed, according to Prop 4.2 in \cite{g-z2} the following holds:
assume that $\varphi$ and $\psi$ are functions in $PSH(X,\theta)$
normalized so that $\sup\varphi=\sup\psi=0$ and such that $MA(\varphi)\leq fdV,$
where $f\in L^{p}(X,dV).$ Then, for any sufficiently small positive
number $\gamma$ (see \cite{g-z2} for the precise condition) there
exists a constant $M,$ only depending on $\gamma$ and an upper bound
on $\left\Vert f\right\Vert _{L^{p}(dV)},$ such that 
\[
\sup_{X}(\psi-\varphi)^{+}\leq M\left\Vert (\psi-\varphi)^{+}\right\Vert _{L^{1}(X,MA(\varphi))}^{\gamma}
\]
Setting $\varphi:=u_{\beta}-\epsilon_{\beta},$ where $\epsilon_{\beta}=\sup u_{\beta}$
and $\psi:=u_{\theta}$ thus gives, for $\gamma,$ fixed 
\[
\sup_{X}(u_{\theta}-u_{\beta}-\epsilon_{\beta})^{+}\leq M\left(\int\left|u_{\theta}-u_{\beta}-\epsilon_{\beta}\right|MA(u_{\beta})\right)^{\gamma}
\]
Now, by the convergence in energy and the $L^{1}-$convergence in
Prop \ref{prop:conv in energy big case} we have 
\[
\int(u_{\beta}-u_{\theta})MA(u_{\beta})\rightarrow0
\]
and since $\left|\int u_{\theta}-u_{\beta}-\epsilon_{\beta}\right|MA(u_{\beta})\leq\int(u_{\theta}-u_{\beta}-C/\beta)MA(u_{\beta})+C/\beta+\epsilon_{\beta}$
we deduce that $\sup_{X}(u_{\theta}-u_{\beta}-\epsilon_{\beta})^{+}\rightarrow0,$
i.e. $u_{\theta}\leq u_{\beta}+\epsilon'_{\beta},$ which concludes
the proof.
\end{proof}

\subsubsection{\label{sub:Bound-on-the}Bound on the Monge-Ampère measure of $u_{\theta}$}

As shown above $u_{\beta}$ converges to $u_{\theta}$ in energy (and
even uniformly). In particular, the convergence holds weakly for the
corresponding Monge-Ampère measures. The bound in Lemma \ref{lem:refined upper bound}
thus implies that
\[
MA(u_{\theta})\leq\sup_{X}\left(\frac{(\theta^{n})_{+}}{dV}\right)dV
\]
for any given volume form $dV$ on $X.$ Taking a sequence of volume
forms $dV_{\epsilon}$ approximating the measure $(\theta^{n})_{+}$
thus gives $MA(u_{\theta})\leq(\theta^{n})_{+}$ on $X.$ Since $MA(u_{\theta})$
is supported on the coincidence set $D$ (which is contained in the
set where $\theta\geq0)$ this proves the inequality \ref{eq:bound on ma intro}.

\subsubsection{Laplacian estimates}

For the Laplacian estimate we will have to assume that the big class
$[\theta]$ is nef.
\begin{prop}
\label{prop:laplace in nef and big}Suppose that the class $[\theta]$
is nef and big. Then the Laplacian of $u_{\beta}$ is locally bounded
wrt $\beta$ on the Zariski open set $\Omega\subset X$ defined as
the Kähler locus of $X.$ \end{prop}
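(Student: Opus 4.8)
The plan is to reduce the nef and big case to the K\"ahler case treated in Proposition \ref{prop:laplace estimate in khler case} by a perturbation argument. Since $[\theta]$ is nef, for each $\epsilon>0$ the class $[\theta]+\epsilon[\omega_0]$ is a K\"ahler class; let $\omega_\epsilon$ be a K\"ahler form in this class and write $\theta_\epsilon:=\theta+\epsilon\omega_0$ (shifting by a smooth exact form if necessary so that $\theta_\epsilon=\omega_\epsilon$ modulo $dd^c$ of a smooth function). The idea is to run the Aubin--Yau--Siu Laplacian computation from the proof of Proposition \ref{prop:laplace estimate in khler case} for the modified Monge--Amp\`ere equation $(\theta_\epsilon+dd^c u_{\beta,\epsilon})^n=e^{\beta u_{\beta,\epsilon}}dV$, whose solution $u_{\beta,\epsilon}$ is smooth on all of $X$, and then let $\epsilon\to 0$. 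The point is that the bisectional curvature lower bound $-B$ for a fixed background K\"ahler form (say $\omega_0$) can be chosen uniformly in $\epsilon$, so the key inequality \ref{eq:key laplacian bound} and its consequence \ref{eq:precise laplcestimate} produce a bound on $tr_{\omega_0}(\theta_\epsilon+dd^c u_{\beta,\epsilon})$ of the form $C e^{B(u_{\beta,\epsilon}-v_\epsilon)}$, where $v_\epsilon$ is a K\"ahler potential for $\omega_\epsilon$ relative to $\theta_\epsilon$.

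The first step is therefore to control $u_{\beta,\epsilon}$ and $v_\epsilon$ uniformly. For $u_{\beta,\epsilon}$ one needs an $L^\infty$ bound that is uniform in both $\beta$ and $\epsilon$: the barrier argument of Lemma \ref{lem:unif estimate in the khler case} gives this once one fixes, once and for all, smooth $\theta$-psh functions (hence $\theta_\epsilon$-psh) with prescribed Monge--Amp\`ere bounds, using that $[\theta]$ is big to get a K\"ahler current as a subsolution. The subtlety is that such barriers for a big class are only bounded on $\Omega$, not on all of $X$; so one instead works with $\theta_\epsilon$-psh barriers with minimal singularities and obtains bounds that degenerate near $X\setminus\Omega$ but are locally uniform on $\Omega$. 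For $v_\epsilon$ one needs that on a fixed compact subset $K\Subset\Omega$ a K\"ahler potential of $\omega_\epsilon$ can be chosen bounded uniformly in $\epsilon$; this follows because $[\theta]$ restricted to $\Omega$ is, in a suitable sense, locally K\"ahler and $\omega_\epsilon\to$ a positive form there, but making this precise is where one must be careful.

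The second step is to pass to the limit. By \cite{begz} the solution $u_\beta$ of the original equation is smooth on $\Omega$, and one checks that $u_{\beta,\epsilon}\to u_\beta$ (in $L^1$, hence, by the uniform estimates just obtained together with stability, locally uniformly on $\Omega$) as $\epsilon\to 0$ for fixed $\beta$; standard elliptic bootstrapping upgrades this to $C^\infty_{loc}(\Omega)$ convergence. The uniform (in $\epsilon$) Laplacian bound on each compact $K\Subset\Omega$ then descends to the bound $-C_K\le\Delta_{\omega_0}u_\beta\le C_K$ on $K$, with $C_K$ independent of $\beta$ since the constants in \ref{eq:precise laplcestimate} depend only on the local $L^\infty$ bounds for $u_{\beta,\epsilon}$ and $v_\epsilon$, the curvature bound $B$, and $\sup_K tr_{\omega_0}\theta_\epsilon$, all of which are controlled uniformly.

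The main obstacle I expect is the boundary behaviour near $X\setminus\Omega$: the Siu estimate is a maximum-principle argument that is global on $X$ for the K\"ahler (regularized) problem, but to get a bound that is uniform in $\epsilon$ \emph{and} localizes correctly on $\Omega$ one must cut off near the non-K\"ahler locus, which forces extra gradient terms into the inequality. Handling these requires either a cutoff-function version of the Laplacian estimate (as in the local estimates behind Theorem \ref{thm:.-reg thm b-d} and \cite{b-d}) or an argument that the minimal-singularity barriers can be arranged to blow up slowly enough that the maximum of $tr_{\omega_0}\omega'_{\beta,\epsilon}e^{-B(u-v)}$ is still attained on a compact subset of $\Omega$ where everything is controlled. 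Once that localization is in place, the rest is a routine repetition of the computation in Proposition \ref{prop:laplace estimate in khler case}.
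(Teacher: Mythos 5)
Your overall strategy (perturb $\theta$ to the K\"ahler classes $\theta_\epsilon:=\theta+\epsilon\omega_0$, run the Aubin--Yau--Siu Laplacian estimate for $u_{\beta,\epsilon}$, and let $\epsilon\to 0$) is the same as the paper's, and you have correctly identified that the real difficulty is localizing the maximum principle to $\Omega$. But you then present that localization as an unresolved obstacle, to be treated either by cutoff functions (with the attendant gradient terms) or by ``an argument that the minimal-singularity barriers can be arranged to blow up slowly enough.'' The paper does not need cutoffs at all, because the localization and the Laplacian estimate are handled by one and the same object, and identifying that object is the key step you have not pinned down.

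Concretely: the background K\"ahler form $\omega$ that enters the Siu inequality should \emph{not} be a smooth K\"ahler form in $[\theta_\epsilon]$ (as you propose with $\omega_\epsilon$, $v_\epsilon$ — whose $L^\infty$ and curvature bounds are not uniform in $\epsilon$), nor the auxiliary form $\omega_0$ (which is not in $[\theta]$, so $tr_{\omega'}\omega_0$ is not controlled by $1-\Delta_{\omega'}(u-v)$). Instead one fixes once and for all a K\"ahler current $\omega=\theta+dd^{c}v$ in $[\theta]$ whose restriction to $\Omega$ is (the restriction of) a genuine K\"ahler form and whose potential $v$ tends to $-\infty$ at $\partial\Omega$; this uses bigness of $[\theta]$ and is exactly the object whose existence defines the K\"ahler locus. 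Running \ref{eq:key laplacian bound} with $u=u_{\beta,\epsilon}$ and this fixed $v$, the quantity $e^{-B(u-v)}tr_{\omega}\omega'$ tends to $0$ at $\partial\Omega$ (because $u_{\beta,\epsilon}-v$ is bounded below uniformly by minimality, while $v\to-\infty$), so the supremum is attained at an interior point of $\Omega$ and the maximum-principle computation goes through verbatim, with constants independent of $\epsilon$ and $\beta$. The resulting bound $tr_{\omega}\omega'\le C\,e^{B(u_{\beta,\epsilon}-v)}$ is automatically only a \emph{local} bound on $\Omega$, degenerating precisely where $v\to-\infty$. No cutoff, no extra gradient terms, and no separate control on the behaviour of the barriers near $X\setminus\Omega$ is required beyond what the choice of $v$ already encodes. (A minor technical point your sketch and the paper both gloss over: since $\omega\in[\theta]$ while $\omega'\in[\theta_\epsilon]$, one should really replace $\omega$ by $\omega+\epsilon\omega_0$ with the same $v$ to stay in the same class; this does not affect the uniformity of the constants.) Without identifying $v$ as simultaneously the background K\"ahler potential \emph{and} the localizing barrier, your proposal does not close.
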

\begin{proof}
We will assume that $X$ is a Kähler manifold, i.e. $X$ admits some
Kähler form $\omega_{0}$ (not necessarily cohomologous to $\theta).$
Then $\theta$ is nef precisely when the class $[\theta]+\epsilon[\omega_{0}]$
is Kähler for any $\epsilon>0.$ Setting $\theta_{\epsilon}:=\theta+\epsilon\omega_{0}$
and fixing $\epsilon>0$ and $\beta>0$ we denote by $u_{\beta,\epsilon}$
the solutions of the Monge-Ampère equations obtained by replacing
$\theta$ with $\theta_{\epsilon}.$ Then it follows from well-known
results \cite{begz} that, as $\epsilon\rightarrow0,$ 
\[
u_{\beta,\epsilon}\rightarrow u_{\beta}\,\,\,\mbox{in\,}\mathcal{C}_{loc}^{\infty}(\Omega).
\]
 Moreover, since $[\theta]$ is assumed big there exists a positive
current $\omega$ in $[\theta]$ such that the restriction of $\omega$
to $\Omega$ coincides with the restriction of a Kähler form on $X.$
More precisely, we can take $\omega$ to be a Kähler current on $X$
such that $\omega=dd^{c}v+\theta$ for a function $v$ on $X$ such
that $v$ is smooth on $\Omega$ and $u-v\rightarrow-\infty$ at the
``boundary'' of $\Omega$ (using that $u$ has minimal singularities;
compare \cite{begz} ). Setting $u:=u_{\beta,\epsilon}$ the inequality
\ref{eq:key laplacian bound} still applies on $\Omega.$ Moreover,
since $u-v\rightarrow-\infty$ at the boundary of $\Omega$ the sup
$s$ defined above is attained at some point of $\Omega$ and $\sup_{X}Ce^{-B(u-v)}\leq C'.$
Accordingly, we deduce that 
\[
s:=\sup_{X}e^{-B(u-v)}tr_{\omega}\omega'\leq C''
\]
precisely as before, which in particular implies that $tr_{\omega}(\theta+dd^{c}u_{\beta,\epsilon})$
is locally bounded from above (wrt $\beta$ and $\epsilon).$ Finally,
letting $\epsilon\rightarrow0$ concludes the proof.
\end{proof}
In the special case when $\theta$ is semi-positive and big (the latter
condition then simply means that $V>0)$ it follows from the results
in \cite{e-g-z} that $u_{\beta}$ is continuous on all of $X$ and
hence Prop \ref{prop:uniform convergence in big case} then says that
$u_{\beta}\rightarrow u_{\theta}$ in $C^{0}(X).$
\begin{rem}
\label{rem: apriori}The precise Laplacian estimate obtained in the
previous proof may, for $v$ and $\omega$ as in the proof above may
be formulated as 
\begin{equation}
tr_{\omega}\omega_{u_{\beta}}\leq\frac{1}{1-nB/\beta}e^{B(u_{\beta}-v)}\left(B/\beta+\sup_{X}\left(tr_{\omega}\theta\right)e^{-\inf_{X}B(u_{\beta}-v)}\right)\label{eq:precise laplcestimate-1}
\end{equation}
In particular, normalizing $v$ so that $\sup_{X}v=0$ gives 
\[
tr_{\omega}\omega_{u_{\beta}}\leq\frac{e^{\sup u_{\beta}-\inf u_{\beta}}}{1-nB/\beta}e^{-Bv}\left(B/\beta+\sup_{X}\left(tr_{\omega}\theta\right)\right)
\]
By the $L^{\infty}-$estimates above $\sup_{X}u_{\beta}-\inf_{X}u_{\beta}$
is uniformly bounded in terms of $\sup_{X}\left|\theta^{n}/dV\right|.$
In particular, letting $\beta\rightarrow\infty$ gives the following
a priori estimate for the Laplacian of the envelope $u_{\theta}:$
\begin{equation}
tr_{\omega}\omega_{u_{\theta}}\leq Ce^{-Bv},\label{eq:a priori estimate on curv of env}
\end{equation}
 where the constant $C$ only depends on an upper bound on $|\theta|_{\omega}.$
Interestingly, the estimate \ref{eq:a priori estimate on curv of env}
is essentially of the same form as the one obtained in \cite{b-d},
in the more general setting of a big class, by a completely different
method where the constant $B$ (i.e. the lower bound on the bisectional
curvature) arises in the initial step of the proof where the envelope
is regularized by the global convolution type operator associated
to the exponential flow determined by the Chern connection. 
\end{rem}

\subsubsection{End of the proof of Theorem \ref{thm:main thm big case intro} in
the big case}

This is proved exactly as in the case of a Kähler class, given the
convergence results established above.

\section{\label{sub:Applications-to-regularization}Transcendental Bergman
metric asymptotics and Applications to regularization of $\omega-$psh
functions }

\subsection{\label{sub:Transcendental-Bergman-kernels}Transcendental Bergman
kernels }

Consider an ample line bundle $L\rightarrow X$ and a pair $(\left\Vert \cdot\right\Vert ,dV)$
consisting of an Hermitian metric $\left\Vert \cdot\right\Vert $
on $L$ and a volume form $dV$ on $X.$ We denote by $\theta$ the
normalized curvature form of $\left\Vert \cdot\right\Vert ,$ which
represents the first Chern class $c_{1}(L)$ in $H^{1,1}(X,\R)\cap H^{2}(X,\Z).$
The corresponding \emph{Bergman function} $\rho_{k}$ (also called
the\emph{ density of states function}), at level $k,$ may be defined
\[
\rho_{k}(x)=\sum_{i=1}^{N_{k}}\left\Vert s_{i}^{(k)}(x)\right\Vert ^{2},
\]
 in terms of any fixed basis $s_{i}^{(k)}$ in $H^{0}(X,L^{\otimes k})$
which is orthonormal wrt the corresponding $L^{2}-$norm determined
by the pair $(\left\Vert \cdot\right\Vert ,dV).$ In other words,
$\rho_{k}(x)$ is the restriction to the diagonal of the squared point-wise
norm of the Bergman kernel of $H^{0}(X,L^{\otimes k})$ (see \cite{berm1}
and references therein). The function $v_{k}:=\frac{1}{k}\log\rho_{k}$
is often referred to as the\emph{ Bergman metric (potential)} at level
$k,$ determined by $(\left\Vert \cdot\right\Vert ,dV)$ (geometrically,
$\left\Vert \cdot\right\Vert e^{-kv_{k}}$ is the pull-back of the
Fubini-Study metric on the projective space $\P H^{0}(X,L^{\otimes k})$
under the corresponding Kodaira embedding). As shown in \cite{berm1}
the corresponding \emph{Bergman measures
\[
\nu_{k}:=\frac{1}{N_{k}}\rho_{k}(x)dV
\]
} converge weakly to $MA_{\theta}(u_{\theta})$ and $v_{k}$ converges
uniformly to $u_{\theta}.$ In particular, 
\[
MA_{\theta}(v_{k})\thickapprox e^{kv_{k}}dV
\]
 in the sense that both measures have the same weak limit (namely
$MA_{\theta}(u_{\theta})).$ We can thus view the Bergman metric $v_{k}$
as an approximate solution to the equation \ref{eq:ma eq in intro},
for $\beta=k.$ This motivates thinking of the family $u_{\beta}$
of\emph{ exact }solutions, defined with respect to a general smooth
closed $(1,1)-$form $\theta$ (not necessarily corresponding to a
line bundle) as a\emph{ transcendental Bergman metric, }in the sense
that it behaves (at least asymptotically as $\beta\rightarrow\infty$)
as a Bergman metric associated to an Hermitian line bundle. Similarly,
$e^{ku_{\beta}}dV(=MA_{\theta}(u_{\beta}))$ can be thought of as
a transcendental Bergman measure. 

The main virtue of the family $u_{\beta}$ is that it is canonically
determined by the pair $(\theta,dV))$ and exists also in the general
transcendental setting of a Kähler class $[\theta]$ which can not
be realized as the first Chern class $c_{1}(L)$ of a line bundle.
Accordingly, it seems natural to expect that it can be used as a substitute
for the timehonoured technique in complex geometry of using Bergman
kernels as an approximation tool. In Section \ref{sub:Regularization-of-psh}
and Section \ref{sec:Applications-to-geodesic} we will give two such
applications to the regularization problem of $\omega-$psh functions
and weak geodesic rays, respectively. 

In the following it will be convenient to use the equivalent formulation
of envelopes of the form $P_{\omega}(f)$ in Section \ref{sub:An-alternative-formulation}
(occasionally dropping the subscript $\omega).$ In other words, we
start with a reference Kähler form $\omega$ on $X.$ Given a smooth
function $f$ we denote by $P_{\beta}(f)$ the solution $\varphi_{\beta}$
of the corresponding Monge-Ampère equation \ref{eq:ma equation with k=0000E4hler ref}.
In the line bundle setting above this corresponds to fixing a reference
metric $\left\Vert \cdot\right\Vert _{0}$ on $L$ and writing $\left\Vert \cdot\right\Vert ^{2}=\left\Vert \cdot\right\Vert _{0}e^{-f}$
wich has curvature form $\theta=\omega+dd^{c}f.$ 
\begin{lem}
\label{lem:The-operator-P beta}The operator $P_{\beta}:\,C^{\infty}(X)\rightarrow SPSH(X,\omega)\cap C^{\infty}(X$
is decreasing, i.e. if $f\leq g,$ then $P_{\beta}f\leq P_{\beta}g.$
Moreover, $P_{\beta}(f+c)=P_{\beta}(f)+c$ for any $c\in\R$ and hence
\begin{equation}
\left\Vert P_{\beta}f-P_{\beta}g\right\Vert _{L^{\infty}(X)}\leq\left\Vert f-g\right\Vert _{L^{\infty}(X)}.\label{eq:contr prop of P beta}
\end{equation}
 \end{lem}
\begin{proof}
The decreasing property follows directly from the comparison principle
(Lemma \ref{lem:cons of compar principle in khler case}) and the
scaling property from the very definitions of $P_{\beta}.$ 
\end{proof}
By Prop \ref{prop:explicit rate in k=0000E4hler case} $P_{\beta}$
converges to the projection operator $P:$ 
\begin{equation}
\left\Vert P_{\beta}f-Pf\right\Vert _{L^{\infty}(X)}\leq\frac{A\log\beta}{\beta},.\label{eq:conv of Pbeta to P}
\end{equation}
where the constant $A$ only depends on an upper bound on $(\omega+dd^{c}f)^{n}.$
In particular, by a simple approximation argument (using \ref{eq:contr prop of P beta})
$P_{\beta}f$ converges to $f$ uniformly, for any continuous function
$f$ on $X.$ These convergence results can be viewed as transcental
analogs of the Bergman metric asymptotics in \cite{berm1} (which
has the corresponding rate with $\beta=k).$ Moreover, for $f$ continuous
the corresponding weak convergence of the transcendental Bergman measures:
\[
\lim_{\beta\rightarrow\infty}e^{k(P_{\beta}f-f)}dV=(\omega+dd^{c}Pf)^{n}
\]
(resulting from the convergence of Monge-Ampère measues) is the analog
of the convergence of Bergman measures towards equilibrium measures
in \cite{berm1} (first shown by Bouche and Tian, independently, in
the case of a smooth and metrics with strictly positive curvature
form $\theta).$
\begin{rem}
Let us briefly explain how the setting above fits into the statistical
mechanical setup recalled in Section \ref{sub:Further-background-and}.
The point is that one can let the inverse temperature $\beta,$ defining
the probability measures \ref{eq:prob measure}, depend on $k.$ In
particular, for $\beta=k$ one obtains a \emph{determinantal} random
point process. A direct calculation (compare \cite{berm3}) reveals
that the corresponding one point correlation measure $\int_{X^{N_{k-1}}}\mu_{k,\beta}$
then coincides with the Bergman measure $\nu_{k}$ defined above.
This means that the limit $k\rightarrow\infty$ which appears in the
``Bergman setting'' can - from a statistical mechanical point of
view - be seen as a limit where the number $N_{k}$ of particles\emph{
and }the inverse temperature $\beta$ jointly tend to infinity. 
\end{rem}

\subsection{\label{sub:Regularization-of-psh}Regularization of $\omega-$psh
functions}

In this section we consider the case of a Kähler class $[\omega].$
We show how to give a simple global PDE proof of the following special
case of the general regularization results of Demailly \cite{dem0}:
\begin{thm}
\label{thm:Demailly}Let $[\omega]$ be a Kähler class. Then any function
$\psi\in PSH(X,\omega)$ can be written as a decreasing limit of functions
$\psi_{j}$ which are smooth and strictly $\omega-$psh. \end{thm}
\begin{proof}
Since $\psi$ is usc we can write it as a decreasing limit of smooth
functions $f_{j}.$ Setting 
\begin{equation}
(P'_{\omega}f)(x):=\sup\left\{ \varphi(x):\,\,\varphi\leq f:\,\,\varphi\in PSH(X,\omega)\cap C^{\infty}\right\} \label{eq:def of proj wrt smooth}
\end{equation}
we note that the sequence $\varphi_{j}:=P'_{\omega}f_{j}$ decreases
to $\psi.$ Indeed, since the operator $P'_{\omega}$ is decreasing
the sequence $\varphi_{j}$ is decreasing and $\varphi_{j}\geq\psi.$
Moreover, fixing a point $x$ and $\epsilon>0$ we have that $\varphi_{j}(x)\leq f_{j}(x)\leq\psi(x)+\epsilon$
for $j\geq j_{\epsilon}$ showing that $\varphi_{j}(x)$ decreases
to $\psi(x)$ for any $x,$ as claimed. Next, fixing $\beta>0$ we
set $\varphi_{j,\beta}:=P_{\beta}f_{j}$ converging uniformly to $\varphi_{j,\beta}$
as $\beta\rightarrow\infty$ (by Prop \ref{prop:explicit rate in k=0000E4hler case};
compare formula \ref{eq:conv of Pbeta to P}). Hence, for appropriate
choices of sequence $\epsilon_{j}\rightarrow0$ and $\beta_{j}\rightarrow\infty$
the sequence $\psi_{j}:=\varphi_{j,\beta_{j}}+\epsilon_{j}$ has the
desired property (and as a consequence we actually have $P'_{\omega}f=P{}_{\omega}f,$
by approximation).
\end{proof}
It should be pointed out that by a local gluing argument of Richberg
\cite{ri} the regularization result above can be reduced to the case
of a \emph{continuous} $\omega-$psh function $\psi$ (using the usual
local regularizations involving convolutions). In turn, it was shown
in \cite{bl-k} that the continuity assumption can be replaced by
the assumption of vanishing Lelong numbers and hence, as explained
in \cite{bl-k}, approximating a general element $\psi\in PSH(X,\omega)$
with the decreasing sequence $\psi_{l}:=\max\{\psi,l\}$ in $PSH(X,\omega)\cap L^{\infty}$
gives a simple elemenary proof of the previous theorem. In the light
of the discussion in the previous section the present global regularization
scheme can be seen as a transcental analog of the well-known Bergman
kernel approach to regularization used in the line bundle setting
(see \cite{dem0,g-z}). The present approach has the virtue of preserving
higher order regularity properties of $\psi$ as summarized in the
following
\begin{thm}
Let $(X,\omega)$ be a compact Kähler manifold $\varphi$ an $\omega-$psh
function such that its Monge-Ampère measure $(\omega+dd^{c}\varphi)^{n}$
has an $L^{\infty}-$density. Then $\varphi_{\beta}:=P_{\beta}(\varphi)$
is in $PSH(X,\omega)\cap C^{2,\alpha}$ for some $\alpha>0$ and satisfies
\[
\sup_{X}\left|\varphi_{\beta}-\varphi\right|\leq C\frac{\log\beta}{\beta},\,\,\,\,\,(\omega+dd^{c}\varphi_{\beta})^{n}\leq C\omega^{n}
\]
where the constant $C$ only depends on an upper bound on the density
$(\omega+dd^{c}\varphi)^{n}/\omega^{n}.$ Moreover, if the positive
current $(\omega+dd^{c}\varphi)$ has coefficents in $L^{\infty}$
then $\omega+dd^{c}\varphi_{\beta}\leq C'\omega$ and $\varphi_{\beta}$
is in $PSH(X,\omega)\cap C^{3,\alpha}$ for any $\alpha<1.$ \end{thm}
\begin{proof}
Since $(\omega+dd^{c}f)^{n}$ has an $L^{\infty}-$density \cite{ko}
gives that $\varphi$ is in $C^{\alpha}(X)$ for some Hölder exponent
$\alpha'>0.$ By the complex generalization of Evans-Krylov theory
in \cite{way} it then follows that $\varphi_{\beta}$ is in $C^{2,\alpha}(X)$
for some $\alpha>0.$ Moreover, if $(\omega+dd^{c}\varphi)$ has coefficents
in $L^{\infty}$ then elliptic boot strapping gives that $\varphi_{\beta}$
is in $C^{3,\alpha}$ for any $\alpha<1$ and Prop \ref{prop:laplace estimate in khler case}
shows that $\omega+dd^{c}\varphi_{\beta}\leq C'\omega.$ 
\end{proof}
In particular, the transcendtal Bergman measure $e^{k(P_{\beta}\varphi-\varphi)}dV$
is uniformly bounded from above as long as $(\omega+dd^{c}\varphi)^{n}$
has an $L^{\infty}-$density. For the ordinary Bergman measure the
corresponding uniform bound was recently established in \cite{b-bern2},
under the stronger assumption that $(\omega+dd^{c}\varphi)$ has coefficents
in $L^{\infty}.$ The latter result was used in the proof, involving
local Bergman metric approximations, of Chen's conjecture concerning
the convexity of the K-energy along weak geodesics in the closure
of the space of Kähler metrics.
\begin{rem}
\label{rem:chinh }Inspired by the first preprint version of the present
paper on ArXiv it was shown in \cite{c-n} how to use a genaralization
of the transcental Bergman kernels introduced here, using Hessian
equations as a substitute for Monge-Ampère equations, in order to
establish the corresponding conjectural global regularization result
for $(\omega,m)-$ subharmonic functions (i.e. usc functions $u$
such that $(\omega+dd^{c}u)^{p}\wedge\omega^{n-p}\geq0$ for $p=1,2..,m;$
the case $m=n$ corresonds to the present setting). The elegant argument
in \cite{c-n} uses the notion of viscocity solutions of Hessian equations
based on the technique introduced in \cite{e-g-z}. One an important
observation in \cite{c-n} is that the convergence of $u_{\beta}$
towards $u_{\theta}$ (as in Prop \ref{prop:explicit rate in k=0000E4hler case})
implies the orthoganlity relation \ref{eq:orthog relation}, also
in the setting of Hessian equations, which forms the basis for the
variational approach to such equations developed in \cite{c-n}.
\end{rem}

\section{\label{sub:Degenerations-induced-by}Degenerations induced by a divisor }

Let now $(X,\omega)$ be a compact Kähler manifold with a fixed divisor
$Z,$ i.e. $Z$ is cut out by a holomorphic section $s$ of a line
bundle $L\rightarrow X.$ We identify the divisor $Z$ with the corresponding
current of integration $[Z]:=[s=0].$ Let us also fix a smooth Hermitian
metric $\left\Vert \cdot\right\Vert $ on $L$ and denote by $\theta_{L}$
its normalized curvature form. Fixing a parameter $\lambda\in[0,1[$
we set 
\begin{equation}
\varphi_{\lambda}:=\sup\{\varphi\,\,\,\varphi\leq0,\,\,\,\varphi\leq\lambda\log\left\Vert s\right\Vert ^{2}+O(1)\}\label{eq:def of envelope with prescribed lelong}
\end{equation}
The upper bound on $\varphi$ is equivalent to demanding that $\nu_{Z}(\varphi)\geq\lambda,$
where $\nu_{Z}(\varphi)$ denotes the Lelong number of $\varphi$
along $Z.$ To the pair $([\omega],Z)$ we associate the following
two constants: 

\[
\epsilon:=\sup\left\{ \lambda:\,[\omega]-\lambda[Z]\,\mbox{is\,\ensuremath{\mbox{Kähler}}}\right\} 
\]
and 
\[
\epsilon':=\sup\left\{ \lambda:\,[\omega]-\lambda[Z]\,\mbox{is\,\ensuremath{\mbox{big}}}\right\} ,
\]
 so that $\epsilon\leq\epsilon'$ (the constants $\epsilon$ and $\epsilon'$
appears as nef and psef thresholds, respectively, in the algebraic
geometry litterature). In the following we will always assume that
$\lambda\in[0,\epsilon'[,$ which ensures that $\varphi_{\lambda}$
is not identically equal to $-\infty.$ 

Set $u_{\lambda}:=\varphi_{\lambda}-\lambda\log\left\Vert s\right\Vert ^{2},$
defining a function in $PSH(X,\theta),$ where $\theta:=\omega-\lambda\theta_{L}.$
Equivalently, 
\begin{equation}
u_{\lambda}:=P_{\theta}(-\lambda\log\left\Vert s\right\Vert ^{2})\label{eq:envelope u lambda defined by project}
\end{equation}
 in the sense of formula \ref{eq:def of proj operator in khler case}.
This is equivalent to the construction of envelopes of metrics with
prescribed singularities out-lined in the introduction of \cite{berm1}
(see also \cite{r-w1} where it is shown that $u_{\lambda}$ is in
$\mathcal{C}_{loc}^{1,1}(X-Z))$ in the case of an integral class).

Note that it follows immediately from the definition that $u_{\lambda}$
has minimal singularities. In particular, if $\lambda<\epsilon,$
then $u_{\lambda}$ is bounded. In fact, $u_{\lambda}$ is even continuous.
The point is that, as long as the function $\varphi_{0}$ is lower
semi-continuous the corresponding envelope $P_{\theta}(\varphi_{0})$
will also be continuous. Indeed, it follows immediately that $P_{\theta}(\varphi_{0})^{*}\leq\varphi_{0}$
and hence $P_{\theta}(\varphi_{0})^{*}=P_{\theta}(\varphi_{0}),$
showing upper-semi continuity. The lower semi-continuity is then a
standard consequence of Demailly's approximation theorem applied to
the Kähler class $[\theta]$ (Theorem \ref{thm:Demailly}).
\begin{thm}
\label{thm:def along divisor}Let $(X,\omega)$ be a Kähler manifold
and $Z$ a divisor on $X$ and fix a positive number $\lambda<\epsilon'.$
Setting $\theta:=\omega-\lambda\theta_{L},$ let $u_{\beta,\lambda}$
be the unique $\theta-$psh function with minimal singularities solving
\[
(\theta+dd^{c}u)^{n}=e^{\beta u}\left\Vert s\right\Vert ^{2\lambda\beta}dV
\]
Then $u_{\beta,\lambda}$ converges uniformly, as $\beta\rightarrow\infty,$
to the envelope $u_{\lambda}.$ More precisely, 
\[
\sup_{X}\left|u_{\beta,\lambda}-u_{\lambda}\right|\leq\delta_{\beta}
\]
for some family of positive numbers $\delta_{\beta}$ (independent
of $\lambda)$ tending to $0$ as $\beta\rightarrow\infty.$ Moreover,
if $\lambda<\epsilon',$ then $\theta\text{+}dd^{c}u_{\beta,\lambda}\leq C\omega$
and hence the convergence holds in $C^{1,\alpha}(X)$ for any $\alpha<1.$\end{thm}
\begin{proof}
Set $f:=-\left\Vert s\right\Vert ^{2},$ which is a lsc function $X\rightarrow]-\infty,\infty]$
such that $dd^{c}f\leq C\omega.$ The convergence in energy and hence
the uniforme convergence then follows as before. Finally, the uniform
bound on $dd^{c}u_{\beta,\lambda}$ is obtained by writing $f$ is
a decreasing limit of smooth function $f_{j}$ such that $dd^{c}f_{j}\leq C'\omega,$
applying Proposition cr for a fixed $j$ and finally letting $j\rightarrow\infty.$ 
\end{proof}
Note that $\varphi_{\lambda,\beta}:=u_{\lambda}+\lambda\log\left\Vert s\right\Vert ^{2}\in PSH(X,\omega)$
is uniquely determined by the following equation on $X-Z:$ 
\begin{equation}
(\omega+dd^{c}\varphi_{\lambda,\beta})^{n}=e^{\beta\varphi_{\lambda,\beta}}dV\label{eq:equation for phi with asymptotics along Z}
\end{equation}
 together with the asymptotics $\varphi_{\lambda,\beta}=\lambda\log\left\Vert s\right\Vert ^{2}+O(1)$
close to $Z.$
\begin{rem}
More generally, it is enough to assume that $\omega$ is semi-positive
and big; then the uniform bound on $dd^{c}u_{\beta,\lambda}$ in the
previous theorem holds on any compact subset of the Kähler locus of
$X$ (by Prop \ref{prop:laplace in nef and big}). For example, this
situation appears naturally when $Z$ is the expectional divisor in
the blow-up of a point on a Kähler manifold $(M,\omega_{M})$ and
$\omega$ is the pull-back of $M.$ Then the corresponding constant
$\epsilon$ is the Seshadri constant of $p$ wrt $[\omega_{M}].$ 
\end{rem}

\section{\label{sec:Applications-to-geodesic}Applications to geodesic rays
and test configurations}

Let us start by briefly recalling the notions of geodesic rays and
test configurations in Kähler geometry (see \cite{p-s2,r-w3} and
references therein). Given an $n-$dimensional Kähler manifold $(X,\omega)$
we denote by $\mathcal{K}_{\omega}$ the space of all $\omega-$Kähler
potentials $\varphi,$ i.e. $\varphi$ is smooth and $\omega+dd^{c}\varphi>0$
(which equivalently means that $\varphi$ is in the interior of the
space $PSH(X,\omega)\cap C^{\infty}(X)).$ The infinite dimensional
space $\mathcal{K}_{\omega}$ comes with a canonical Riemannian metric,
the Mabuchi-Semmes-Donaldson metric. The corresponding geodesics rays
$\varphi^{t}(x)$ satisfy a PDE on $X\times[0,\infty[$ which, upon
complexification of $t$ (where $t:=-\log|\tau|^{2})$ is equivalent
to an $S^{1}-$invariant smooth solution to the Dirichlet problem
for the Monge-Ampère equation on the product $X\times\Delta^{*}$
of $X$ with the punctured unit-disc in the one-dimensional complex
torus $\C^{*}.$ In other words, $\varphi(x,\tau):=\varphi^{t}(x)$
satisfies 
\[
(dd^{c}\varphi+\pi^{*}\omega)^{n+1}=0,\,\,\,\,\mbox{on }X\times\Delta^{*}
\]
and $\varphi^{t}$ is called a\emph{ subgeodesic} if $dd^{c}\varphi+\pi^{*}\omega\geq0.$
In the case of an integral class $[\omega],$ i.e. when the class
is equal to the first Chern class $c_{1}(L)$ of a line bundle $L,$
there is a particularly important class of (weak) geodesics which
are associated to so called\emph{ test configurations} for $(X,L).$
This is an algebro-geometric gadget which gives an appropriate $\C^{*}-$equivariant
polarized closure $\mathcal{X}$ of $X\times\C^{*}$ over $\C.$ More
precisely, the data defining a test configuration $(\mathcal{X},\mathcal{L})$
for $(X,L)$ consists of 
\begin{itemize}
\item A normal variety $\mathcal{X}$ with a $\C^{*}-$action and flat equivariant
map $\pi:\mathcal{X}\rightarrow\C$ 
\item A relatively ample $\Q-$line bundle $\mathcal{L}$ over $\mathcal{X}$
equipped with an equivariant lift $\rho$ of the $\C^{*}-$action
on $X$
\item An isomorphism of $(X,L)$ with $(\mathcal{X},\mathcal{L})$ over
$1\in\C$
\end{itemize}
Here, we note that a ``transcendental'' analog of a test configuration
can be defined in the setting of non-integer classes. 
\begin{defn}
Let $(X,[\omega])$ be a complex manifold equipped with a Kähler class
$[\omega].$ A test configuration for $(X,[\omega])$ consists of
the following data:\end{defn}
\begin{itemize}
\item A normal Kähler space $\mathcal{X}$ equipped with a holomorphic $S^{1}-$action
and a flat holomorphic map $\pi:\mathcal{X}\rightarrow\C.$ 
\item An $S^{1}-$equivariant embedding of $X\times\ensuremath{\C^{*}}$
in $\mathcal{X}$ such that $\pi$ commutes with projection onto the
second factor of $X\times\ensuremath{\C^{*}}.$
\item A $(1,1)-$cohomology Kähler class $[\Omega]$ on $\mathcal{X}$ whose
restriction to $X\times\{1\}$ may be identified with $[\omega]$
under the previous embedding.
\end{itemize}
In particular, a test configuration $(\mathcal{X},\mathcal{L})$ for
a polarized variety $(X,L)$ induces a test configuration for $(X,c_{1}(L)).$
The point is that the $\C^{*}-$action on $(\mathcal{X},\mathcal{L})$
induces the required isomorphism between $\mathcal{X}$ and $X\times\C^{*}$
over $\C^{*}.$ 

Next, we explain how to obtain geodesic rays from a test configuration.
Given a test configuration $(\mathcal{X},[\Omega])$ for $(X,[\omega])$
we fix a smooth representative form $\Omega$ which is $S^{1}-$invariant.
For the sake of notational simplicity we also assume that $\Omega$
coincides with $\omega$ on $X\times\{1\}.$ First we let $\Phi$
be the unique bounded $\Omega-$psh function on $\mathcal{M}:=\pi^{-1}(\Delta)\subset\mathcal{X}$
satisfying the Dirichlet problem
\begin{equation}
(dd^{c}\Phi+\Omega)^{n+1}=0,\,\,\,\,\mbox{on\,\ensuremath{\mbox{int(}\mathcal{M})}}\label{eq:dirichlet problem on M}
\end{equation}
with vanishing boundary values (in the sense that $\Phi(p)\rightarrow0$
as $p$ approaches a point in $\partial\mathcal{M}$). In fact, it
can be shown, that $\Phi$ is automatically continuous up to the boundary
(see below). Next, we fix an $S^{1}-$invariant function $F$ on $X\times\C^{*}$
such that 
\[
\Omega=\pi^{*}\omega+dd^{c}F
\]
 and set $\varphi:=\Phi+F,$ which gives a correspondence 
\begin{equation}
PSH(X\times\C^{*},\Omega)\longleftrightarrow PSH(X\times\C^{*},\pi^{*}\omega),\,\,\,\,\Phi\leftrightarrow\varphi\label{eq:correspondence}
\end{equation}
Setting $\varphi^{t}(x):=\varphi(x,\tau)$ for $\varphi$ corresponding
to the solution $\Phi$ of the Dirichlet problem \ref{eq:dirichlet problem on M}
then defines the geodesic ray in question.

Let us also recall that the solution $\Phi$ of the Dirichlet problem
\ref{eq:dirichlet problem on M} may alternatively be defined as the
following envelope:
\begin{equation}
\mbox{\ensuremath{\Phi(x)}=\ensuremath{\sup}}\left\{ \Psi(x):\,\,\:\Psi\in PSH(\mathcal{M},\Omega):\,\,\,\Psi_{\partial\mathcal{M}}\leq0\right\} \label{eq:dirchlet solution as envelope}
\end{equation}
As shown in \cite{r-w3}, in the line bundle case, the geodesic ray
$\varphi^{t}$ may be realized as a Legendre transform of certain
envelopes determined by the test configuration. Here we note that
the latter result may be generalized to the ``transcendental'' setting.
To this end first observe that a test configuration $(\mathcal{X},[\Omega])$
for $(X,[\omega])$ determines a concave decreasing family 
\[
\mathcal{F}^{\mu}(X,\omega)\subset PSH(X,\omega)
\]
of convex subspaces indexed by $\mu\in\R,$ defined as follows: the
subspace $\mathcal{F}^{\mu}(X,\omega)$ consists of all $\varphi$
in $PSH(X,\omega)$ such that, setting $\bar{\varphi}(x,t):=\varphi(x),$
the current 
\[
dd^{c}(\bar{\varphi}-\mu\log|\tau|^{2})+\pi^{*}\omega
\]
 on $X\times\C^{*}$ extends to a positive current on $\mathcal{X}$
in $[\Omega].$ In other words, we demand that the current $dd^{c}\bar{\varphi}+\pi^{*}\omega$
extends to current on $\mathcal{X}$ in $[\Omega]$ with Lelong number
at least $\mu$ along the central fiber of $\mathcal{X}$ (in a generalized
sense, as we are allowing negative Lelong numbers). The family $\mathcal{F}^{\mu}(X,\omega),$
thus defined, is clearly a concave decreasing family of convex subspaces
(it is the ``psh analogue'' of the filtrations of $H^{0}(X,kL)$
defined in \cite{witt,r-w3}). Next, to the family $\mathcal{F}^{\mu}(X,\omega)$
we associate the following family of envelopes $\psi_{\mu}$ in $PSH(X,\omega):$
\begin{equation}
\psi_{\mu}(x):=\sup_{\psi\in\mathcal{F}^{\mu}(X,\omega)}\left\{ \psi(x),\,\,\,\psi\leq0\right\} ,\label{eq:def of envelope wrt concave family}
\end{equation}

\begin{prop}
\label{prop:dirichlet as legendre}Let $(\mathcal{X},[\Omega])$ be
a test configuration for $(X,[\omega]).$ Then the corresponding geodesic
ray $\varphi^{t}$ in $PSH(X,\omega)$ may be realized as the Legendre
transform (wrt $t)$ of the envelopes $\psi_{\mu},$ i.e. 
\[
\varphi^{t}(x)=\sup_{\mu\in\R}\left\{ \psi_{\mu}(x)+\mu t\right\} 
\]
\end{prop}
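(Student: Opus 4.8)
The plan is to establish the two inequalities $\varphi^t(x) \le \sup_{\mu}\{\psi_\mu(x) + \mu t\}$ and $\varphi^t(x) \ge \sup_\mu\{\psi_\mu(x) + \mu t\}$ separately, using the envelope characterization \eqref{eq:dirchlet solution as envelope} of the solution $\Phi$ of the Dirichlet problem on $\mathcal M$ and the identification $\varphi = \Phi + F$ under the correspondence \eqref{eq:correspondence}. The key translation is that an $S^1$-invariant candidate $\Psi \in PSH(\mathcal M, \Omega)$ with $\Psi|_{\partial\mathcal M}\le 0$ corresponds, via $\psi := \Psi + F$, to a function on $X \times \Delta^*$ which is $\pi^*\omega$-psh and whose behaviour near the central fiber is controlled; the Legendre duality between $t = -\log|\tau|^2 \in [0,\infty)$ and the weight $\mu$ is exactly the Legendre transform appearing in the statement. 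So the proof is really a dictionary between the envelope \eqref{eq:dirchlet solution as envelope} in the total space $\mathcal M$ and the fiberwise envelopes \eqref{eq:def of envelope wrt concave family} together with the elementary fact that, for a concave function, taking the Legendre transform twice recovers the function.

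For the first (easy) direction I would argue as follows. Fix $\mu \in \R$ and take any $\psi \in \mathcal F^\mu(X,\omega)$ with $\psi \le 0$. By definition of $\mathcal F^\mu$, the current $dd^c(\bar\psi - \mu\log|\tau|^2) + \pi^*\omega$ extends to a positive current on $\mathcal X$ in $[\Omega]$; writing this extension in terms of the potential $F$, the function $\Psi := \bar\psi - \mu\log|\tau|^2 + F - (\text{correction})$ is a competitor in the envelope \eqref{eq:dirchlet solution as envelope} once one checks the boundary condition on $\partial\mathcal M$ (this is where $\psi \le 0$ is used, together with $\Omega = \omega$ on $X\times\{1\}$). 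Translating back via $\varphi = \Phi + F$ gives $\varphi^t(x) \ge \psi(x) + \mu t$ for all such $\psi$ and all $\mu$, hence $\varphi^t(x) \ge \sup_\mu\{\psi_\mu(x) + \mu t\}$. I expect the only subtlety here is the bookkeeping of boundary values and the sign conventions for $\log|\tau|^2$.

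The reverse inequality is the substantive one. Here I would use that $\Phi$ is the \emph{largest} $\Omega$-psh function on $\mathcal M$ with nonpositive boundary values, combined with the key structural observation from \cite{r-w3}: the function $t \mapsto \varphi^t(x)$ is convex (it is a geodesic, hence affine, but more relevantly the subgeodesic property forces convexity in $t$ after the substitution), so it equals the double Legendre transform of itself. Concretely, define $\tilde\psi_\mu(x) := \inf_t\{\varphi^t(x) - \mu t\}$; one must show $\tilde\psi_\mu \le \psi_\mu$, i.e. that $\tilde\psi_\mu$ (extended as a function on $X\times\C^*$ constant in $t$) lies in $\mathcal F^\mu(X,\omega)$ and is $\le 0$. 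The membership in $\mathcal F^\mu$ amounts to checking that $dd^c(\overline{\tilde\psi_\mu} - \mu\log|\tau|^2) + \pi^*\omega \ge 0$ extends across the central fiber with the right Lelong number; this follows because $\Phi + \mu\log|\tau|^2$ (the "partial Legendre slice" of the geodesic) is itself $\Omega$-psh on $\mathcal M$ by the subgeodesic property of $\varphi$, and pushing down / taking the fiberwise sup-over-$t$ preserves plurisubharmonicity by standard arguments on the extension of psh functions past analytic subsets (the normality of $\mathcal X$ is used here). Then $\varphi^t = \sup_\mu\{\tilde\psi_\mu + \mu t\} \le \sup_\mu\{\psi_\mu + \mu t\}$, which together with the first direction closes the argument.

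The main obstacle I anticipate is the reverse inequality's verification that the Legendre slices $\tilde\psi_\mu$ actually belong to $\mathcal F^\mu(X,\omega)$ — that is, controlling the Lelong number (in the generalized sense, allowing negative values) of the extended current along the central fiber of $\mathcal X$, and justifying that the fiberwise envelope of an $S^1$-invariant $\Omega$-psh function on $\mathcal M$ descends to a genuine element of the prescribed concave family. This is exactly the point where the results of \cite{r-w3} in the line bundle case must be transcribed to the transcendental setting, and it relies on the envelope definition \eqref{eq:dirchlet solution as envelope} of $\Phi$ rather than on any algebraic structure; the $C^0$-regularity of $\Phi$ up to $\partial\mathcal M$ (mentioned as available) is what makes the boundary-value matching in the Legendre transform rigorous.
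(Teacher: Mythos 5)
Your proposal is correct and follows essentially the same architecture as the paper's proof: the easy inequality by feeding competitors from $\mathcal{F}^{\mu}(X,\omega)$ into the Dirichlet envelope \eqref{eq:dirchlet solution as envelope}, and the harder inequality by double Legendre duality (using convexity of $t\mapsto\varphi^{t}$) plus showing that the Legendre slice lies in $\mathcal{F}^{\mu}(X,\omega)$. One precision worth recording: the operation that yields $\omega$-plurisubharmonicity of the slice $\tilde\psi_{\mu}=\inf_{t}\{\varphi^{t}-\mu t\}$ is an \emph{infimum} over $t$ (you wrote ``sup'' at the key step), and the paper makes this rigorous by invoking Kiselman's minimum principle, treating separately the extension of the resulting current across the central fiber via the upper bound $\Phi_{\mu}\leq\Phi$.
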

\begin{proof}
By the definition of the envelopes it is equivalent to prove that
\[
\varphi^{t}(x)=\sup_{\psi_{\mu}}\left\{ \psi_{\mu}(x)+\mu t\right\} 
\]
where the sup ranges over\emph{ all} $\psi_{\mu}\in\mathcal{F}^{\mu}(X,\omega)$
with $\psi_{\mu}\leq0$ on $X.$ Using the correspondence \ref{eq:correspondence}
we may identify $\psi_{\mu}(x)+\mu t$ with a function $\Phi_{\mu}$
in $PSH(X\times\C^{*},\Omega),$ which, by the extension assumption
for the elements in the subspace $\mathcal{F}^{\mu}(X,\omega),$ extends
uniquely to define an element in $PSH(\mathcal{X},\Omega)$ (which
by construction vanishes on the boundary of $\mathcal{M}).$ But then
$\Phi_{\mu}\leq\Phi,$ the envelope defining the geodesic ray $\varphi^{t}.$
This proves the lower bound on $\varphi^{t}(x).$ To prove the upper
bound we note that, by the convexity in $t,$ we may write 
\[
\varphi^{t}(x)=\sup_{\mu\in\R}\left\{ \phi_{\mu}^{*}(x)+\mu t\right\} ,
\]
 where $\phi_{\mu}^{*}$ is the Legendre transform, wrt $t,$ of $\varphi^{t}$
(with our sign conventions $\phi_{\mu}^{*}$ is thus concave wrt $\mu):$
\[
\phi_{\mu}^{*}(x)=\inf_{t}\left\{ \mu t+\varphi^{t}(x)\right\} 
\]
 In particular, $\phi_{\mu}^{*}(x)+\mu t\leq\varphi^{t}$ and moreover,
by Kiselman's minimum principle, $\phi_{\mu}^{*}(x)$ is $\omega-$psh
on $X.$ Identifying $\phi_{\mu}^{*}(x)+\mu t$ with a function $\Phi_{\mu}$
in $PSH(X\times\C,\Omega),$ as before, it thus follows that $\Phi_{\mu}\leq\Phi.$
In particular, $\Phi_{\mu}$ is bounded from above and thus extends
to define an element in $PSH(\mathcal{X},\Omega),$ i.e. the corresponding
curvature current is positive. But this means that $\phi_{\mu}^{*}(x)\in\mathcal{F}^{\mu}(X,\omega)$
which concludes the proof of the upper bound.\end{proof}
\begin{example}
\label{(deformation-to-the}(deformation to the normal cone; compare
\cite{r-t,r-t0}). Any given (say reduced) divisor $Z$ in $X$ determines
a special test configuration whose total space $\mathcal{X}$ is the
deformation to the normal cone of $Z.$ In other words, $\mathcal{X}$
is the blow-up of $X\times\C$ along the subscheme $Z\times\{0\}.$
Denote by $\pi$ the corresponding flat morphism $\mathcal{X}\rightarrow\C$
which factors through the blow-down map $p$ from $\mathcal{X}$ to
$X\times\C.$ This construction also induces a natural embedding of
$X\times\C^{*}$ in $\mathcal{X}.$ Given a Kähler class $[\omega]$
on $X,$ which we may identify with a class on $X\times\C$ and a
positive number $c$ we denote by $[\Omega_{c}]$ the corresponding
class $[p^{*}\omega]-c[E]$ on $\mathcal{X},$ where $E$ is the exceptional
divisor and we are assuming that $c<\epsilon,$ where $\epsilon$
is defined as the sup over all positive numbers $c$ such that the
class $[\Omega_{c}]$ is Kähler (i.e. $\epsilon$ is the Seshadri
constant of $Z$ wrt $[\omega]).$ In this setting it is not hard
to check that $\varphi\in\mathcal{F}^{\mu}(X,\omega)$ iff $\nu_{Z}(\varphi)\geq\mu+c,$
where $\nu_{Z}(\varphi)$ denotes the Lelong number of $\varphi$
along the divisor $Z$ in $X.$ The point is that $[p^{*}\omega]-cE$
may be identified with the subspace of currents in $[p^{*}\omega]$
with Lelong number at least $c$ along the divisor $E$ in $\mathcal{X}$
which in this case is equivalent to having Lelong number at least
$c$ along the central fiber $[\mathcal{X}_{0}],$ which in turn is
equivalent to $\varphi$ having Lelong number at least $c$ along
$Z$ in $X.$ In particular, setting $\mu=\lambda-c$ we have $\varphi_{\lambda}=\psi_{\mu},$
where $\varphi_{\lambda}$ is the envelope defined by formula \ref{eq:def of envelope with prescribed lelong},
i.e. $u_{\lambda}=\psi_{\mu}-\lambda\log\left\Vert s\right\Vert ^{2},$
where $u_{\lambda}$ is defined by \ref{eq:envelope u lambda defined by project}. 
\end{example}
Now we observe that one obtains a family of subgeodesics, approximating
the weak geodesic $\varphi^{t}$ in the closure of $\mathcal{K}_{\omega},$
associated to a divisor $Z$ and a number $c\in[0,\epsilon[,$ as
in the previous example, by setting 
\[
\varphi_{\beta}^{t}:=\frac{1}{\beta}\log\int_{[0,c]}d\lambda e^{\beta\left((\lambda-c)t+\varphi_{\lambda,\beta}\right)},
\]
 where $\varphi_{\lambda,\beta}$ is the regularization of $\varphi_{\lambda}$
introduced in Section \ref{sub:Degenerations-induced-by}, solving
the Monge-Ampère equation \ref{eq:equation for phi with asymptotics along Z}
(which is indeed a subgeodesic as it is a superposition of the subgeodesics
$(\lambda-c)t+\varphi_{\lambda,\beta}$). Combining Theorem \ref{thm:def along divisor}
with the previous proposition we arrive at the following
\begin{thm}
\label{cor:geod ray in text}Let $[\omega]$ be a Kähler class on
$X$ and $Z$ a divisor in $X$ and fix a positive number $c\in[0,\epsilon[.$
Then the corresponding subgeodesics $\varphi_{\beta}^{t}$ converge,
as $\beta\rightarrow\infty,$ to the weak geodesic $\varphi^{t},$
uniformly on $X\times[0,T[$ for any fixed $T<\infty$ (and for $T=\infty$
in the case when $[\omega]\in H^{2}(X,\Q)).$ Moreover, the first
order space-time derivatives of $\varphi_{\beta}^{t}$ are uniformly
bounded on $X\times[0,\infty[.$ \end{thm}
\begin{proof}
By Theorem \ref{thm:def along divisor} 
\[
\varphi_{\beta}^{t}=\frac{1}{\beta}\log\int_{[0,\Lambda]}d\lambda e^{\beta((\lambda-c)t+\varphi_{\lambda})}+o(1),\,\,\,\varphi_{\lambda}:=u_{\theta,\lambda}+\lambda\log\left\Vert s\right\Vert ^{2},
\]
 where the $o(1)-$term is independent of $t$ and converges uniformly
to $0$ on $X\times[0,c]$ as $\beta\rightarrow\infty.$ As a consequence,
for $t\in[0,T]$ we clearly have 
\[
\varphi_{\beta}^{t}=\sup_{\mu\in[-c,0]}\left(\mu t+\psi_{\mu}\right)+o(1)
\]
(where, as explained in the previous example, $\psi_{\mu}=\varphi_{\lambda}$
for $\mu=\lambda-c)$ and by Prop \ref{prop:dirichlet as legendre}
the first term above defines the desired geodesic ray $\varphi^{t}.$
Finally, we need to show that the error term above is uniform at $T\rightarrow\infty$
in the case when $[\omega]\in H^{2}(X,\Q)).$ To this end we will
use a compactification argument. Set, as before $t=-\log|\tau|^{2},$
where $\tau\in\C^{*}.$ By the definition of the deformation to the
normal cone $\mathcal{X}$ (see the previous example) the function
$\Phi_{\mu}$ defined in the proof of Prop \ref{prop:dirichlet as legendre}
defines an $\Omega$-psh function on $\mathcal{X}.$ We thus a get
a family of functions on $\mathcal{X}$ defined by 
\[
\Psi_{\beta}:=\frac{1}{\beta}\log\int_{[-c,0]}d\mu e^{\beta\Phi_{\mu}}
\]
 and such that $\Psi_{\beta}$ increases (by Hölder's inequality)
to the function $\Psi_{\infty}:=\sup_{\mu}\Phi_{\mu},$ which, according
to the proof of Prop \ref{prop:dirichlet as legendre}, coincides
with the envelope $\Phi$ defined by formula \ref{eq:dirchlet solution as envelope}.
But the latter envelope is continuous (up to the boundary) on $\mathcal{M}$
and hence it follows from Dini's lemma that $\Psi_{\beta}$ converges
to $\Psi$ uniformly, as desired. The continuity of the envelope $\Phi$
follows from standard arguments in the case when $\mathcal{M}$ is
smooth and the back-ground form $\eta$ is Kähler. We recall that
the argument just uses that any sequence of $\eta-$psh functions
may be approximated by a decreasing sequence of continuous $\eta-$psh
functions, as follows from the approximation results in \cite{dem0}
(see for example \cite{b-d} for a similar situation). The latter
approximation property has been generalized, in the case of rational
classes, to the case when $\eta$ is merely assumed to be semi-positive
(and big) \cite{c-g-z} and hence the proof of the continuity still
applies in the present situation (strictly speaking the results in
op. cit. apply to \emph{compac}t complex manifolds, but we can simply
pass to a resolution of the the $\C^{*}-$equivariant compactification
of $\mathcal{X}$ fibered over the standard $\P^{1}-$compactification
of $\C$ and adopt the argument using barriers in \cite{berman6ii}).

Finally, to prove the last statement we observe that, fixing a first
order differential operator $D_{x}$ on $X,$ we have 
\[
\frac{d}{dt}\varphi_{\beta}^{t}(x):=\int_{[0,c]}(\lambda-c)\nu_{(x,t)}^{(\beta)}(\lambda),\,\,\,\,\,D_{x}\varphi_{\beta}^{t}(x)=\int_{[0,c]}D_{x}\varphi_{\beta,\lambda}(x)\nu_{(x,t)}^{(\beta)}(\lambda),
\]
 in terms of the following probability measure $\nu_{(x,t)}^{(\beta)}$
on $[0,c]:$ 
\[
\nu_{(x,t)}^{(\beta)}(\lambda):=e^{\beta\left((\lambda-c)t+\varphi_{\lambda,\beta}\right)}/\int_{[0,c]}d\lambda e^{\beta\left((\lambda-c)t+\varphi_{\lambda,\beta}\right)}
\]
But then the estimate on the time derivative follows immediately from
the uniform bound $|\lambda|\leq c$ and the estimate on the space
derivative form the uniform bound on $D_{x}\varphi_{\beta,\lambda}$
(Theorem \ref{thm:def along divisor}).\end{proof}
\begin{rem}
In the case when $[\omega]=c_{1}(L)$ it was shown in\cite{p-s-4}
how to approximate (in a point-wise almost everywhere sense) a weak
geodesic $\varphi_{t}$ associated to a test configuration by smooth
Bergman geodesics associated to higher powers of the line bundle $L$
(see also \cite{r-w3} for an alternative proof). Accordingly, it
seems natural to view $\varphi_{\beta}^{t}$ as a transcendtal analog
of the Phong-Sturm Bergman geodesics. One advantage of $\varphi_{\beta}^{t}$
is that the convergence is uniform (even when $t$ is not constrained
to be in a bounded interval in the case of a rational class). Assuming
the conjectural validity of the appoximation result in \cite{c-g-z}
for general transcendental classes, the uniformity in the previous
theorem holds for $T=\infty,$ in general. It is also interesting
to compare the bound on the first derivatives above with the case
of toric Bergman geodesics studied in \cite{s-z}, where uniform $C^{1}-$convergence
is established. It seems likely that a similar $C^{1}-$convergence
holds in the present setting (even in the general non-toric setting),
but we will not go further into this here. It would also be interesting
to see if there is a uniform bound on the space Laplacians of $\varphi_{\beta}^{t}$
(say on any fixed time inverval). 
\end{rem}

\subsubsection{General (analytic) test configurations}

Of course, the test configurations defined by the deformation to the
normal cone of a divisor are very special ones. But the convergence
result in Cor \ref{cor:geod ray in text} can be extended to general
test configurations for a polarized manifold $(X,L)$ (by replacing
$MA(u_{\beta,\lambda})$ with $MA(\varphi_{\beta,\mu})$ where $\varphi_{\beta,\mu}\in\mathcal{F}^{\lambda}(X,\omega)$
satisfies the equation \ref{eq:equation for phi with asymptotics along Z}).
The argument uses Odaka's generalization of the Ross-Thomas slope
theory \cite{o} defined in terms of a flag of ideals on $X.$ The
point is that by blowing up the corresponding ideals one sees that
the pullback of the corresponding envelopes $\psi^{\mu}$ have divisorial
singularities (compare Prop 3.22 in \cite{hi}) so that the previous
convergence argument can be repeated (as they apply also when $L$
is merely semi-ample and big, which is the case on the blow-up). 

More generally, an analytic generalization of test configurations
for a polarization $(X,L)$ was introduced in \cite{r-w3}. Similarly,
an\emph{ analytic test configuration }for a Kähler manifold $(X,\omega)$
may be defined as a concave family $[\psi^{\mu}]$ of singularity
classes in $PSH(X,\omega).$ The corresponding space $\mathcal{F}^{\mu}(X,\omega)$
may then be defined as all elements $\psi$ in such that $[\psi]=[\psi^{\mu}].$
To any such family one associates a family of envelopes $\psi_{\mu}$
defined by formula \ref{eq:def of envelope wrt concave family}. As
shown in \cite{r-w3} taking the Legendre transform of $\psi_{\mu}$
wrt $\mu$ gives a curve $\varphi^{t}$ in $PSH(X,\omega)$ which
is a weak geodesic. The regularization scheme introduced in this paper
could be adapted to this general framework by first introducing suitable
algebraic regularizations of the singularity classes and using blow-ups
(as in \cite{o}). But we leave these developments and their relation
to K-stability and the Yau-Tian-Donaldson conjecture for the future.
For the moment we just observe that the latter conjecture admits a
natural generalization to transcendental classes.
\begin{example}
\label{ex:big test}Continuing with the previous example of deformation
to the normal cone, we observe that one obtains a (transcendtal) analytic
test configuration, which is not a bona fide test configuration, when
$c\in]\epsilon,\epsilon'[.$ In geometric terms this corresponds to
allowing the line bundle \emph{$\mathcal{L}$ }(or the corresponding
Kähler class on the total space) to be merely big. In this setting
the $C^{0}-$ convergence in Theorem \ref{cor:geod ray in text} still
holds (with the same proof) as long as $t$ is restricted to a bounded
interval. 
\end{example}

\subsubsection{\label{sub:A-generalization-of}A generalization of the Yau-Tian-Donaldson
conjecture to transcendetal classes.}

Using Wang's intersection formula \cite{wa} there is a natural generalization
of the notion of K-stability of a polarization $(X,L)$: by definition,
a Kähler class $[\omega]$ on $X$ is\emph{ K-stable} if, for any
test configuration $(\mathcal{X},[\Omega])$ for $(X,[\omega])$ the
corresponding Donaldson-Futaki invariant satisfies $DF(\mathcal{X},[\Omega])\geq0$
with equality iff $\mathcal{X}$ is equivariantly isomorphic to a
product. Similarly, K-polystability is defined by not requiring that
the isomorphism be equivariant. Here $DF(\mathcal{X},[\Omega])$ is
defined as the following sum of intersection numbers

\[
DF(\mathcal{X},[\Omega]):=a[\Omega]^{n+1}+(n+1)K_{\mathcal{X}/\P^{1}}\cdot[\Omega]^{n},\,\,\,\,a:=n(-K_{X})\cdot[\omega]^{n-1}/[\omega]^{n}
\]
 where we have replaced $\mathcal{X}$ with its equivariant compactification
over $\P^{1}$ and $[\Omega]$ with the corresponding class on the
compactification and the intersection numbers are computed on the
compactification. The transcendental version of the Yau-Tian-Donaldson
conjecture may then be formulated as the conjecture that $[\omega]$
admits a constant scalar curvature metric iff $(X,[\omega])$ is K-polystable.
It is interesting to compare this generalization with Demailly-Paun's
generalization of the Nakai-Moishezon criterium for ample line bundles
\cite{d-p}, which in the case when $X$ is a projective manifold
says that if a $(1,1)-$ class $[\theta]$ has positive intersections
with all $p-$dimensional subvarieties of $X$ then $[\theta]$ contains
a Kähler form $\omega.$ The difference is thus that in order to draw
the considerably stronger conclusion that $\omega$ can be chosen
to have constant scalar curvature one needs to impose conditions on
``secondary'' intersection numbers as well, i.e. intersection numbers
defined over all suitable degenerations of $(X,[\theta]).$ Finally,
it should be pointed out that it may very well be that the notion
of (transcendental) test configuration above has to be generalized
a bit further in order for the previous conjecture to stand a chance
of being true (compare the discussion in the introduction of the paper).


\begin{thebibliography}{10}
\bibitem{au}Aubin, T: Equations du type Monge-Amp`ere sur les vari\'{ }et\'{ }es
K\"{ } ahl\'{ }eriennes com- pactes\textquotedblright , Bull. Sc.
Math. 102 (1978).

\bibitem{berm1}Berman, R.J: Bergman kernels and equilibrium measures
for line bundles over projective manifolds. 35 pages. The American
J. of Math., Vol 131, Nr. 5, 2009 

\bibitem{berm2}Berman, R.J:\emph{:} Bergman kernels for weighted
polynomials and weighted equilibrium measures of C\textasciicircum{}n.
19 pages. Indiana University Mathematics Journal, Volume 58, issue
4, 2009

\bibitem{berm3}Berman, R.J:\emph{:} Determinantal point processes
and fermions on complex manifolds: large deviations and bosonization.
Comm. in Math. Phys. Volume 327, Issue 1 (2014), Page 1-47

\bibitem{berm5}Berman, R.J: Kahler-Einstein metrics emerging from
free fermions and statistical mechanics. 22 pages, J. of High Energy
Phys. (JHEP), Volume 2011, Issue 10 (2011)

\bibitem{berm6}Berman, R.J: A thermodynamical formalism for Monge-Ampere
equations, Moser-Trudinger inequalities and Kahler-Einstein metrics\emph{.}
Advances in Math. 1254. Volume: 248. 2013

\bibitem{berman6ii}Berman, R.J: K-polystability of Q-Fano varieties
admitting Kahler-Einstein metrics\emph{.} Inventiones Math. March
2016, Volume 203, Issue 3, pp 973-1025

\bibitem{berm8}Berman, R.J: Kähler-Einstein metrics, canonical random
point processes and birational geometry. arXiv:1307.3634 (To appear
in the AMS Proceedings of the 2015 Summer Research Institute on Algebraic
Geometry).

\bibitem{b-bern2}B erman, R.J; Berndtsson, B: Convexity of the K-energy
on the space of Kahler metrics and uniqueness of extremal metrics.
arXiv:1405.0401 

\bibitem{b-b}Berman, R.J.; Boucksom, S: Growth of balls of holomorphic
sections and energy at equilibrium. Invent. Math. Vol. 181, Issue
2 (2010), p. 337

\bibitem{b-b-w}Berman, R.J.; Boucksom, S; Witt Nyström, D: Fekete
points and convergence towards equilibrium measures on complex manifolds.
Acta Math. Vol. 207, Issue 1 (2011), 1-27

\bibitem{bbgz}Berman, R.J; Boucksom, S; Guedj,V; Zeriahi: A variational
approach to complex Monge-Ampere equations. Publications math. de
l'IHÉS (2012): 1-67 , November 14, 2012

\bibitem{bbgez}R.J.Berman\emph{; }Eyssidieu, P: S. Boucksom, V. Guedj,
A. Zeriahi:\emph{ }Kähler-Einstein metrics and the Kähler-Ricci flow
on log Fano varieties. arXiv:1111.7158 

\bibitem{b-d}Berman, R.J; Demailly, J-P: Regularity of plurisubharmonic
upper envelopes in big cohomology classes. Perspectives in analysis,
geometry, and topology, 39\textendash 66, Progr. Math., 296, Birkhäuser/Springer,
New York, 2012. 

\bibitem{b-l}Berman, R.J; Lu, C.H: From the Kähler-Ricci flow to
moving free boundaries. Preprint.

\bibitem{bl-k}B\l ocki, Z; Ko\l odziej, S: On regularization of plurisubharmonic
functions on manifolds.  Proc. Amer. Math. Soc. 135 (2007), no. 7,
2089\textendash 2093 

\bibitem{bo}Boucksom, S., Divisorial Zariski decompositions on compact
complex manifolds. Ann. Sci. Ecole Norm. Sup. (4) 37 (2004), no. 1,
45\textendash 76.

\bibitem{begz}Boucksom, S; Essidieux,P: Guedj,V; Zeriahi: Monge-Ampere
equations in big cohomology classes. Acta Math. 205 (2010), no. 2,
199\textendash 262.

\bibitem{cgh}Campana, F; Guenancia, H; P\u{a}un, M: Metrics with
cone singularities along normal crossing divisors and holomorphic
tensor fields. To appear in Ann. Sci.\'{ } Ecole Norm. Sup. arXiv:1104.4879
(2011)

\bibitem{clp}Chen, X; Li, L; Paun, M: Approximation of weak geodesics
and subharmonicity of Mabuchi energy. arXiv:1409.7896 

\bibitem{c-t-w}Chu, J., Tosatti, V., Weinkove, B. The Monge-Amp`ere
equation for non-integrable almost complex structures. preprint, arXiv:1603.00706

\bibitem{c-g-z}Coman, Dan; Guedj, Vincent; Zeriahi, Ahmed Extension
of plurisubharmonic functions with growth control. J. Reine Angew.
Math. 676 (2013)

\bibitem{d-r}Darvas, T; Rubinstein, YA: Kiselman's principle, the
Dirichlet problem for the Monge-Ampere equation, and rooftop obstacle
problems. arXiv:1405.6548, 2014 

\bibitem{dem02}Demailly, J. P: Singular hermitian metrics on positive
line bundles, in Complex Algebraic Varieties (Bayreuth, 1990) (K.
Hulek, T. Peternell, M. Schneider, F. Schreyer, eds.), Lecture Notes
in Math . 1507 , 87\textendash 104, Springer-Verlag, New York, 1992

\bibitem{dem0}Demailly, J. P.: Regularization of closed positive
currents and intersection theory. J. Alg. Geom. 1 (1992), no. 3, 361\textendash 409.

\bibitem{d-p}Demailly, J-P; Paun, M: Numerical characterization of
the Kähler cone of a compact Kähler manifold. Ann. of Math. (2) 159
(2004), no. 3, 1247\textendash 1274.

\bibitem{de-ro}Dervan, R; Ross, J: K-stability for Kähler Manifolds.
arXiv:1602.08983.

\bibitem{e-g-z}Eyssidieux, P; Guedj, V; Zeriahi, A: Viscosity solutions
to degenerate complex Monge-Ampère equations. Comm. Pure Appl. Math.
64 (2011), no. 8, 1059\textendash 1094. 

\bibitem{g-z}Guedj,V; Zeriahi, A: Intrinsic capacities on compact
Kähler manifolds. J. Geom. Anal. 15 (2005), no. 4, 607--639.

\bibitem{g-z2} Guedj,V; Zeriahi, A: Stability of solutions to complex
Monge-Ampère equations in big cohomology classes. Mathematical Research
Letters Volume 19 (2012) Number 5, pages 1025 \textendash{} 1042.

\bibitem{h-m}Hedenmalm, H; Makarov, N: Quantum Hele-Shaw flow, arXiv.org/abs/math.PR/0411437,
Coulomb gas ensembles and Laplacian growth, arXiv:1106.2971 

\bibitem{hi}Hisamoto, T: On the limit of spectral measures associated
to a test configuration. arXiv:1211.2324 (to appear in Journal für
die reine und angewandte Mathematik)- 

\bibitem{jmr}Jeffres, T; Mazzeo, R; Rubinstein, Y.A:; Kähler-Einstein
metrics with edge singularities. Annals of Math. Volume 183 (2016),
Issue 1, Pages 95-176 

\bibitem{ko}Ko\l odziej, S: Hölder continuity of solutions to the
complex Monge-Ampère equation with the right-hand side in Lp: the
case of compact Kähler manifolds. Math. Ann. 342 (2008), no. 2, 379\textendash 386

\bibitem{c-n}Chinh H. Lu, Van-Dong Nguyen: Degenerate complex Hessian
equations on compact Kähler manifolds. arXiv:1402.5147

\bibitem{o}Odaka, Y: A generalization of Ross-Thomas' slope theory.
Osaka J. Math. Volume 50, Number 1 (2013), 171-185

\bibitem{p-s1b}Phong, D. H.; Sturm, J: Regularity of geodesic rays
and Monge-Ampère equations. Proc. Amer. Math. Soc. 138 (2010), no.
10, 3637\textendash 3650.

\bibitem{p-s3}Phong, D. H.; Sturm, J: The Dirichlet problem for degenerate
complex Monge-Ampere equations. Comm. Anal. Geom. 18 (2010), no. 1,
145\textendash 170. 

\bibitem{p-s2}Phong, D. H.; Sturm, J: Lectures on stability and constant
scalar curvature. Handbook of geometric analysis, No. 3, 357\textendash 436,
Adv. Lect. Math. (ALM), 14, Int. Press, Somerville, MA, 2010.

\bibitem{ri}R. Richberg, Stetige streng pseudokonvexe Funktionen
, Math. Ann. 175 (1968), 257-286

\bibitem{p-s-4}Phong, D. H.; Sturm, J: Test configurations for K-stability
and geodesic rays. J. Symp. Geom. 5 (2007) 221\textendash 247.

\bibitem{r-w1}Ross, J;Witt Nystrom, D: Envelopes of positive metrics
with prescribed singularities. arXiv:1210.2220 

\bibitem{r-w2}Ross, J;Witt Nystrom, D: The Hele-Shaw flow and moduli
of holomorphic discs. Compositio Math. Vol. 151 / Issue 12 / December
2015, pp 2301- 2328

\bibitem{r-w3}Ross, J;Witt Nystrom, D: Analytic test configurations
and geodesic rays. J. Symplectic Geom. Volume 12, Number 1 (2014),
125-169.

\bibitem{r-t0}Ross, J; Thomas, R.P: An obstruction to the existence
of constant scalar cur- vature K\"{ }ahler metrics. J. Differential
Geom. 72 (2006), 429\textendash 466, MR 2219940, Zbl 1125.53057

\bibitem{r-t}Ross, J; Thomas, R.P: A study of the Hilbert-Mumford
criterion for the sta- bility of projective varieties. J. Algebraic
Geom. 16 (2007), no. 2, 201\textendash 255

\bibitem{ru}Rubinstein, Y.A: Some discretizations of geometric evolution
equations and the Ricci iteration on the space of K\textasciidieresis ahler
metrics, Adv. Math. 218 (2008), 1526\textendash 1565.

\bibitem{sj}Sjöström Dyrefelt, Z: K-semistability of cscK manifolds
with transcendental cohomology class. arXiv:1601.07659. 

\bibitem{st}Stoppa, J: Twisted constant scalar curvature Kähler metrics
and Kähler slope stability. J. Differential Geom. Volume 83, Number
3 (2009), 663-691. 

\bibitem{sz}Szekelyhidi, G: Filtrations and Test-configurations.
Math. Annalen. June 2015, Volume 362, Issue 1, pp 451-484

\bibitem{to}Tosatti, V: Regularity of envelopes in Kähler classes.
https://arxiv.org/pdf/1702.05015.pdf

\bibitem{za}Zabrodin, A; Matrix models and growth processes: from
viscous flows to the quantum Hall effect. Preprint in 2004 at arXiv.org/abs/hep-th/0411437

\bibitem{p-s}Florian T. Pokorny, Michael Singer: Toric partial density
functions and stability of toric varieties. arXiv:1111.5259

\bibitem{siu}Siu, Y T: Lectures on Hermitian-Einstein metrics for
stable bundles and Kähler-Einstein metrics. DMV Seminar, 8. Birkhäuser
Verlag, Basel, 1987. 171 pp

\bibitem{s-z}Song, J; Zelditch, S: Test configurations, large deviations
and geodesic rays on toric varieties. Adv. Math. 229 (2012), no. 4,
2338\textendash 2378

\bibitem{wa}Wang, X :Height and GIT weight. Math. Res. Lett. 19 (2012),
no. 04, 909\textendash 926

\bibitem{way}Wang, Y., On the $C^{2,\alpha}$-regularity of the complex
Monge-Amp`ere equation , Math. Res. Lett. 19 (2012), no. 4, 939\textendash 946

\bibitem{witt}Witt Nyström, D: Test configurations and Okounkov bodies.
Compos. Math. 148 (2012), no. 6, 1736\textendash 1756.

\bibitem{WN}Witt Nyström, D: Duality between the pseudoeffective
and the movable cone on a projective manifold (with an appendix by
S. Boucksom). arXiv:1602.03778, 2016 

\bibitem{Wu}Wu, D: Kähler-Einstein metrics of negative Ricci curvature
on general quasi-projective manifolds. Comm. Anal. Geom. 16 (2008),
no. 2, 395\textendash 435.

\bibitem{y}Yau, S-T: On the Ricci curvature of a compact K\"{ } ahler
manifold and the complex Monge- Amp`ere equation\textquotedblright ,
Comm. Pure Appl. Math. 31 (1978)\end{thebibliography}
\end{document}